\newtheorem{theorem}{Theorem}[section]
\newtheorem{lemma}[theorem]{Lemma}
\newtheorem{proposition}[theorem]{Proposition}
\newtheorem{corollary}[theorem]{Corollary}
\newtheorem{conjecture}[theorem]{Conjecture}
\newtheorem{itheorem}{Theorem}
\theoremstyle{definition}
\newtheorem{definition}[theorem]{Definition}
\newtheorem{example}[theorem]{Example}
\newtheorem{remark}[theorem]{Remark}
  \newcommand{\nc}{\newcommand}
  \newcommand{\renc}{\renewcommand}
\newcommand{\arxiv}[1]{\href{http://arxiv.org/abs/#1}{\tt\nolinkurl{arXiv:#1}}}
\nc{\ep}{\epsilon}
\nc{\hh}{h}
\nc{\fg}{\mathfrak g}
\nc{\fk}{\mathfrak k}
\nc{\fh}{\mathfrak h}
\nc{\kb}{\mathbb{C}}
\nc{\C}{\mathbb{C}}
\nc{\cO}{\mathcal{O}}
\newcommand{\Mirkovic}{Mirkovi\'c\xspace}
\nc{\Sym}{\operatorname{Sym}}
\nc{\om}{\omega}
\nc{\Z}{\mathbb{Z}}
\nc{\si}{\sigma}
\nc{\la}{\lambda}
\nc{\al}{\alpha}
\nc{\Gr}{\mathsf{Gr}}
\nc{\Gp}{G[t]}
\nc{\Gm}{{G_1[[t^{-1}]]}}
\nc{\gp}{\mathfrak{g}[t]}
\nc{\gm}{t^{-1}\mathfrak{g}[[t^{-1}]]}
\nc{\Grlmbar}{\Gr^{\overline{\lambda}}_\mu}
\nc{\Grblmbar}{\Gr^{\bla}_\mu}
\renc{\O}{\mathcal{O}}
\nc{\val}{\operatorname{val}}
\nc{\pp}{\mathbb{P}^1}
\nc{\excise}[1]{}
\nc{\bla}{{\underline{\boldsymbol{\la}}}}
\nc{\bz}{\mathbf{z}}
\nc{\vlam}{\vec{\lambda}}
\nc{\jcom}[1]{{\color{blue} Joel: #1}}
\nc{\bcom}[1]{{\color{red} Ben: #1}}
\nc{\ocom}[1]{{\color{red!50} Oded: #1}}
\nc{\acom}[1]{{\color{green} Alex: #1}}
\nc{\bysame}{\leavevmode\hbox to3em{\hrulefill}\thinspace}
\renc{\al}{\alpha}
\begin{document}

\begin{center} {\Large \bf Yangians and quantizations of slices\\ in the
    affine Grassmannian }
\end{center}
\bigskip

\noindent{\bf Joel Kamnitzer}\\
Dept.\ of Mathematics, University of Toronto;
{\tt jkamnitz@math.toronto.edu}\smallskip \\
{\bf Ben Webster}\footnote{Supported by NSF grant DMS-1151473 and  by NSA grant H98230-10-1-0199.}\\
Dept.\ of Mathematics, Northeastern University;
{\tt b.webster@neu.edu}\smallskip\\
{\bf Alex Weekes}\\
Dept.\ of Mathematics, University of Toronto; {\tt alex.weekes@utoronto.ca}\smallskip \\
{\bf Oded Yacobi}\\
Dept.\ of Mathematics, University of Toronto; {\tt oyacobi@math.toronto.edu}\smallskip \\
\bigskip\\

\renewcommand{\thefootnote}{\fnsymbol{footnote}}

{\small
\begin{quote}
\noindent {\em Abstract.}
We study quantizations of transverse slices to Schubert varieties in the affine Grassmannian.  The quantization is constructed using quantum groups called shifted Yangians --- these are subalgebras of the Yangian we introduce which generalize the Brundan-Kleshchev shifted Yangian to arbitrary type.  Building on ideas of Gerasimov-Kharchev-Lebedev-Oblezin, we prove that a quotient of the shifted Yangian quantizes a scheme supported on the transverse slices, and we formulate a conjectural description of the defining ideal of these slices which implies that the scheme is reduced.  This conjecture also implies the conjectural quantization of the Zastava spaces for $PGL_n$ of Finkelberg-Rybnykov.
\end{quote}
}

\section{Introduction}
\label{sec:introduction}

\renc{\theitheorem}{\Alph{itheorem}}

We initiate a program which relates the geometry of affine Grassmannians
with the representation theory of shifted Yangians.  More precisely, we study
slices in affine Grassmannians which arise naturally
in geometric representation theory; they correspond to weight spaces
of irreducible representations under the geometric Satake
correspondence.  Our main result is that certain subquotients of
Yangians quantize these slices.

There is a general program to study symplectic resolutions by means
of the representation theory of their quantizations, generalizing the interplay between between semisimple Lie algebras and nilpotent cones.   We believe that the representation theory of shifted Yangians and its relationship to the geometry of slices in the affine Grassmannian will prove to be a very fruitful area of inquiry.

\subsection{Slices in the affine Grassmannian}
Let $ G $ be a complex semisimple group and consider its \textbf{thick
  affine
Grassmannian} $ \Gr = G((t^{-1}))/G[t] $.  Attached to each pair of
dominant coweights $\la\geq \mu$, we have Schubert varieties
$\Gr^\la,\Gr^\mu\subset \Gr$, with $\Gr^\mu\subset
\overline{\Gr^\la}$.  The neighborhood in $\overline{\Gr^\la}$ of a
point in $\Gr^\mu$ is encapsulated in a transversal slice to the
latter variety in the former, which we denote by $\Grlmbar$.  This
slice is an important object of study in geometric representation
theory because under the geometric Satake correspondence it is related
to the $\mu$ weight space in the irreducible representation of $
G^\vee $ of highest weight $ \lambda $.

The Manin triple $ (\fg[t], t^{-1}\fg[[t^{-1}]], \fg((t^{-1})) ) $ provides $ \Gr $ with the structure of a Poisson variety. The slice $ \Grlmbar $ is an affine Poisson subvariety and thus, its coordinate ring is naturally a
Poisson algebra.  The purpose of this paper is to explicitly describe quantizations of this Poisson algebra.

\subsection{Quotients of shifted Yangians}
The slice $ \Grlmbar $ is defined as the intersection $
\overline{\Gr^\la} \cap \Gr_\mu $, where $ \Gr_\mu $ is an orbit of
the group $ \Gm$, the first congruence subgroup of $ G[[t^{-1}]] $.
Thus on the level of functions $ \O(\Grlmbar) $ is a quotient of $
\O(\Gr^\mu) $, and $ \O(\Gr^\mu) $ is a subalgebra of $ \O(\Gm) $.  In
order to quantize $ \Grlmbar $ we follow a three step procedure which
mirrors this construction.

We first construct a version  $Y $ of the Yangian, which is a subalgebra of the Drinfeld Yangian.  Next, we define natural
subalgebras $Y_\mu\subset Y$, called {\bf shifted Yangians}, quantize
$ \Gr_\mu $.  This generalizes the shifted Yangian for
$\mathfrak{gl}_n$ introduced by Brundan-Kleshchev
\cite{BK}. Finally, we define a quotient $Y_\mu^\lambda$ of $Y_\mu $
 using some remarkable representations of $ Y $ as difference
operators, constructed by Gerasimov-Kharchev-Lebedev-Oblezin
\cite{GKLO}.

\begin{itheorem} The algebras defined above are all quantizations of
  the analogous geometric objects.  That is:
  \begin{enumerate}
  \item The Yangian $Y$ quantizes $\Gm$.
  \item The shifted Yangian $Y_\mu$ quantizes $\Gr_\mu$.
  \item The quotient $Y^\la_\mu $ quantizes a (possibly
non-reduced) scheme supported on $ \Grlmbar$.
  \end{enumerate}
\end{itheorem}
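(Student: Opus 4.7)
The plan is to equip each of $Y$, $Y_\mu$, and $Y_\mu^\lambda$ with a compatible ascending filtration $F_\bullet$, all descended from a loop-parameter filtration on $Y$ (assigning generators a degree governed by their order in $t^{-1}$), show that each $\operatorname{gr}^F$ is commutative, and identify it with the coordinate ring of the corresponding geometric object as a Poisson algebra. For (1), the approach is to establish a PBW theorem for $Y$: exhibit generators whose symbols freely generate $\O(\Gm) \cong \Sym(\gm^*)$, and then verify that the induced Poisson bracket on $\operatorname{gr} Y$ matches the one coming from the Manin triple $(\gp, \gm, \fg((t^{-1})))$. This reduces to a direct computation of commutators of low-order generators, compared against the explicit Poisson bracket on $\O(\Gm)$.

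For (2), since $Y_\mu \subset Y$ is a subalgebra, it inherits the filtration automatically, and $\operatorname{gr} Y_\mu \hookrightarrow \O(\Gm)$. The orbit map $\Gm \to \Gr$, $g \mapsto g \cdot t^\mu$, has image $\Gr_\mu$ and induces an inclusion $\O(\Gr_\mu) \hookrightarrow \O(\Gm)$ onto the invariants under the stabilizer of $t^\mu$. The task is therefore to show that the symbols of the generators of $Y_\mu$ span precisely this invariant subalgebra. The shift in the definition of $Y_\mu$ should be calibrated exactly so that this is the case; checking it amounts to a generator-by-generator comparison of the classical action on $t^\mu$ with the order-of-vanishing constraints built into the definition of the shift.

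For (3), the quotient $Y_\mu^\lambda$ is defined via the GKLO difference-operator representation. The plan is to produce explicit elements of $Y_\mu$ whose symbols lie in the defining ideal $I(\Grlmbar) \subset \O(\Gr_\mu)$, verify that these elements lie in the kernel of the GKLO action (and hence in the defining ideal of $Y_\mu^\lambda$), and conclude that $\operatorname{gr} Y_\mu^\lambda$ surjects onto a quotient of $\O(\Grlmbar)$. Upgrading this to the statement that the support is \emph{exactly} $\Grlmbar$ can be done by constructing enough $Y_\mu^\lambda$-modules from the GKLO representation (for instance, Verma-like highest-weight modules indexed by points of $\Grlmbar$) to force the spectrum of $\operatorname{gr} Y_\mu^\lambda$ to cover every point of $\Grlmbar$. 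The main obstacle will be this part (3): one has no direct handle on the full kernel of the GKLO representation, so the quantum analogs of the classical defining equations of $\Grlmbar$ must be produced by hand, and their containment in the kernel verified against the intricate explicit formulas of Gerasimov--Kharchev--Lebedev--Oblezin. The fact that the theorem only claims \emph{support} on $\Grlmbar$ — with reducedness deferred to a conjecture — reflects precisely this obstacle: the support can be matched by classical geometric arguments, but equating the quantum ideal with the full ideal of $\Grlmbar$ requires a structural input beyond what pointwise or generator-level checks can provide.
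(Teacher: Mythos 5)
Your plan matches the paper at a high level, and your treatment of part (2) is essentially the paper's argument (restrict the isomorphism, check via generalized minors that the generators' symbols land in $\O(\Gr_\mu)$, then compare Hilbert series). But for parts (1) and (3) the paper proceeds differently, and in both cases more economically. For (1) the paper does not hand-verify that low-order commutators match the Poisson bracket; it invokes Drinfeld--Gavarini duality. Because $Y := (U_\hh \gp)'$ arises from Gavarini's ``prime'' construction, the general theory gives at once that $Y/\hh Y \cong \O(G^*)$ as Poisson--Hopf algebras, where $G^*$ is the dual Poisson--Lie group, and the Manin triple identifies $G^*$ with $\Gm$. The explicit minor formulas are then checked against the perfect Hopf pairing with $U(\gm)$ and a Hilbert-series count. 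A direct bracket comparison as you describe would not by itself deliver the Hopf isomorphism, and it would have to contend with the fact that the Poisson structure on $\O(\Gm)$ is the nonlinear Poisson--Lie bracket, not the Lie--Poisson bracket on $\Sym(\gm^*)$ that your wording suggests. For (3), your containment direction matches the paper (the $A_i^{(s)}$ with $s > m_i$ are killed by the GKLO map $\Psi_\mu^\lambda$, and since the reduced kernel is a Poisson ideal it contains $J^\la_\mu$, forcing the support into $\Grlmbar$), but the paper's argument for equality of support is not module-theoretic. Instead: the vanishing locus of the reduced kernel is a Poisson subvariety of $\Gr_\mu$, hence a union of strata $\Gr^{\bar\nu}_\mu$; if that union were properly contained in $\Grlmbar$, then some power of $\prod_i \Delta^{(m_i)}_{\om_i,\om_i}$ would lie in the ideal, but under the GKLO map reduced mod $\hh$ this element is a nonzero monomial in the $z_{i,k}$ and hence not nilpotent. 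This single computation replaces the family of Verma-type modules you propose, which would require substantially more machinery to construct and to prove covers all of $\Grlmbar$.
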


Item (1) above is proven using the Drinfeld-Gavarini quantum groups duality, (2) follows simply from (1), and (3) follows using the GKLO representation.  In fact, we produce a family $ Y^\la_\mu(\mathbf c) $ of quantizations which we conjecture to map surjectively to the universal family in the sense of Bezrukavnikov-Kaledin \cite{BK04a}.

Unfortunately, we are not able to prove that the scheme quantized by $Y^\la_\mu $ is reduced.  However, we do provide a conjectural description of the generators of the ideal of $
\Grlmbar $ inside $ \Gr_\mu $ and prove that this conjecture implies that $Y^\la_\mu $ quantizes the reduced scheme structure on  $ \Grlmbar$.
Moreover, we prove that this conjecture gives a simple description for the ideal defining $ Y^\la_\mu$.

\subsection{Motivation and relation to other work}

Brundan-Kleshchev \cite{BK} construct an isomorphism between quotients
of shifted Yangians of $ \mathfrak{gl}_n $ and $ W$-algebras of $
\mathfrak{gl}_m $.  On one hand, it is known that $ W$-algebras are
quantizations of Slodowy slices.  On the other hand, by the work of
\Mirkovic -Vybornov  \cite{MVy} we have an isomorphism between Slodowy
slices for $ \mathfrak{sl}_m $ and slices in the affine Grassmannian
for $ GL_n $.  Thus via these results, we see that quotients of
shifted Yangians for $ \mathfrak{gl}_n $ quantize slices in the affine
Grassmannian for $ GL_n $.  This motivated us to look for a direct
construction of quantizations of affine Grassmannian slices (for any
semisimple $ G $) using quotients of shifted Yangians. (The idea that
the Brundan-Kleshchev isomorphism should be thought as a quantization
of the \Mirkovic -Vybornov isomorphism was independently observed by Losev \cite[Remark 5.3.4]{Lo}.)

If we take a limit of $ \Grlmbar$ as $ \lambda \rightarrow \infty $
and $ \lambda-\mu $ is fixed, then the slice $ \Grlmbar $ becomes the
Zastava space $ Z_{\lambda - \mu}$.  Finkelberg-Rybnikov \cite{FR} have
given conjectural quantizations of Zastava spaces (for $PGL_n$) using
quotients of Borel Yangians, which are a limit of shifted Yangians.
Thus in this limit we prove their conjectures, dependent on the above
mentioned conjecture about the ideal of $ \Grlmbar$.

Earlier work on shifted Yangians by Brundan and Kleshchev \cite{BK2}
suggest that one natural direction for future work is the study of a
version of category $\cO$ over the algebra $Y^\la_\mu$.  Because of the geometric Satake correspondence, we think of category $ \cO $ for $ Y^\la_\mu $ as a categorification of a weight space in a representation of the Langlands dual group $G^\vee$.  Thus we expect that these categories (with $\la $ fixed) carry categorical $ \fg^\vee $-actions.  Moreover, conjectures of Braden,
Licata, Proudfoot and the second author \cite{BLPW} suggest that category $ \cO $ for $ Y^\la_\mu $
should be Koszul dual to similar categories constructed from quiver varieties (in type A, we expect that this reduces to parabolic-singular duality of Beilinson-Ginzburg-Soergel \cite{BGS}).

\subsection{Acknowledgements}
We would like to thank Alexander Braverman, Pavel Etingof, Mikhail
Finkelberg, Ivan \Mirkovic, Sergey Oblezin, Travis Schedler and
Catharina Stroppel for extremely useful conversations.

\section{Symplectic structure on slices in the affine Grassmannian}

\subsection{Notation}\label{sec:notation}
For any group $ H $, we will write $ H((t^{-1})) = H(\C((t^{-1}))) $ for its loop group and write $ H[t] = H(\C[t]) $ and $ H[[t^{-1}]] = H(\C[[t^{-1}]]) $ for its usual subgroups.  Let $ H_1[[t^{-1}]] $ denote the first congruence subgroup of $ H[[t^{-1}]] $, i.e. the kernel of the evaluation at $ t^{-1} = 0 $, $ H[[t^{-1}]] \rightarrow H $.

Throughout $ G $ will denote a fixed complex semisimple group, with opposite Borel subgroups $ B, B_-$, unipotent subgroups $N, N_-$, maximal torus $ T$, coweight lattice $ X $, Weyl group $ W $, set of roots $ \Delta $, simple roots $ \{\alpha_i\}_{i\in I} $.  We write $ \{\om_i \}_{i \in I} $ for the fundamental weights of the simply connected form of $ G $.

Following Drinfeld, we use generators $e_i, f_i, h_i$ for $\fg$ where
$$ [h_i, e_j] = (\alpha_i, \alpha_j) e_j, \quad [h_i, f_j] = -(\alpha_i,\alpha_j) f_j, \quad [e_i, f_j] = \delta_{ij} h_i $$
along with the usual Serre relations.  Let $(a_{ij})_{1 \leq i,j \leq n}$ be the Cartan matrix of $\fg$, and let $d_i$ be the unique coprime positive integers such that $b_{ij} = d_i a_{ij}$ is a symmetric matrix.  Then the associated invariant form on $\fg$ is defined by $(e_i, f_j) = \delta_{ij}$, and $(\alpha_i, \alpha_j) = (h_i, h_j) = d_i a_{ij}$, and in particular $h_i$ is the image of $\alpha_i$ under the identification of $\fh$ and $\fh^\ast$.

This is as opposed to the standard Chevalley generators $e_i', f_i', h_i'$, which we will identify as
$$e_i = - d_i^{1/2} e_i',\quad f_i = - d_i^{1/2} f_i',\quad h_i = d_i h_i'$$
In this way we have fundamental weights $\omega_i(h_j') = \delta_{ij}$, and a lift of the Weyl group defined via $\overline{s_i} = \exp{(f_i')}\exp{(-e_i')}\exp{(f_i')}$.

If $ \mu $ is a weight or coweight, we write $ \mu^* = -w_0 \mu $.  Likewise, we write $ i^* $ if $\alpha_{i^*} = -w_0 \alpha_i $.

  Let $ V $ be a representation of $ G $, and let $ v \in V $,$ \beta \in W^*$.  The matrix entry $ \Delta_{\beta, v} $ is a function on $ G $ given by $ \Delta_{\beta,v}(g) = \langle
\beta, g v \rangle $.  If  $w_1,w_2 \in W $ and $ \tau $ a dominant weight, we define
$$ \Delta_{w_1\tau, w_2\tau}(g) = \langle\overline{w_1} v_{-\tau}, \overline{w_2} v_\tau\rangle $$
using the lift described above, where $v_\tau$ is the highest weight vector for the irreducible representation $V(\tau)$ and $ v_{-\tau} $ is the dual lowest weight vector in $ V(\tau^*) $.

Using this matrix entry (also known as generalized minor), we define the function $ \Delta_{\beta,v}^{(s)} $ on $G((t^{-1})) $, for $ s \in
\mathbb{Z} $, whose value at $ g $ is the coefficient of the
polynomial $ \Delta_{\beta, v}(g)$.  More precisely, these are given by
the formula
\begin{equation*}
\Delta_{\beta,v}(g) =  \sum_{s=-\infty}^{\infty} \Delta_{\beta, v}^{(s)}(g) t^{-s}
\end{equation*}

\subsection{Slices in the affine Grassmannian}
Let $ G $ be a semisimple complex group. In this paper, we will work with the \textbf{thick affine Grassmannian} $ \Gr
= G((t^{-1})) / G[t]  $.  We have an embedding of the usual thin affine Grassmannian into the thick affine Grassmannian
$$
G((t))/G[[t]] \cong G[t,t^{-1}]/G[t] \hookrightarrow G((t^{-1})) / G[t]
$$
In this paper, we work with the thick affine Grassmannian since it is
forced upon us by the non-commutative algebras we consider.  One
manifestation of this is the fact that the thick
Grassmannian is an honest scheme, while the thin Grassmannian is only
an ind-scheme.  However, at a first reading, this difference will be
of little importance, and the reader can pretend that we are working
with the usual thin affine Grassmannian.

Any coweight $ \lambda$ can be thought of as $\C[t,t^{-1}]$-point of
$G$, which we can think of as a $\C((t^{-1}))$-point as well.
To avoid confusion, we use $t^\lambda$ to denote this point in $ G((t^{-1})) $.  We also use $t^\lambda$ for the image of $ t^\la $ in $ \Gr $.

Let $\la$ and $\mu$ will denote
dominant weights.  Define \[ \Gr^\lambda =  \Gp t^\lambda, \qquad \Gr_{\mu} = \Gm t^{
  w_0 \mu}. \]   Recall that the thin affine Grassmannian is precisely $\cup_\lambda \Gr^\lambda $.

Our main object of interest
will be \[ \Grlmbar := \overline{\Gr^\lambda} \cap \Gr_\mu.\]  This
variety is a transverse slice to $\Gr^\mu $ inside of $
\overline{\Gr^\lambda}$ since $\Gr_\mu$ intersects every $\Gr^\nu$
transversely, and the intersection $\Gr^{\overline{\mu}}_\mu$ is just the
point $t^{w_0\mu}$.  In particular, this variety is non-empty if and only if $\mu\leq \la$,
that is, if $\Gr^\mu\subset \overline{\Gr^\la}$.  These varieties arise naturally under the geometric Satake correspondence of Lusztig \cite{L}, Ginzburg \cite{G}, and Mirkovi\'c-Vilonen \cite{MV}: the intersection homology of $\Grlmbar$ is identified with the $\mu$ weight space of the irreducible $G^\vee$-representation of highest weight $\la$.

Note that $ \kb^\times $ acts on $\Gr $ by loop rotation.  This action preserves the $ G[t] $ and $ \Gm $ orbits and so $ \kb^\times $ acts on $ \Grlmbar $.   The following result is standard.

\begin{proposition}\mbox{}
\begin{enumerate}
\item $ \Grlmbar $ is an affine variety of dimension $ 2\langle \rho, \lambda - \mu \rangle$.
\item The action of $ \kb^\times $ on $ \Grlmbar $ contracts $ \Grlmbar$ to the unique fixed point $ t^{w_0 \mu} $.
\end{enumerate}
\end{proposition}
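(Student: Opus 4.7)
Both claims reduce to understanding $\Gr_\mu$ as an orbit of the pro-unipotent group $\Gm$ through the $T$-fixed point $t^{w_0\mu}$.

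For part (1), the plan is to first show that $\Gr_\mu$ itself is affine. The orbit map $\Gm \to \Gr_\mu$, $g \mapsto g \cdot t^{w_0\mu}$, is surjective with stabilizer $\Gm \cap t^{w_0\mu} \Gp t^{-w_0\mu}$; decomposing $\gm$ into $\fh$-weight spaces and tracking which modes survive under conjugation by $t^{w_0\mu}$, one identifies the orbit with an affine space of dimension $2\langle \rho, \mu\rangle$. Since $\overline{\Gr^\lambda}$ is closed in $\Gr$, the intersection $\Grlmbar$ is closed in the affine variety $\Gr_\mu$ and is therefore itself affine. For the dimension, I would invoke the transversality of $\Gm$-orbits to $\Gp$-orbits, in the form of the standard \Mirkovic-Vilonen-type statement that $\Gr^\nu \cap \Gr_\mu$ is non-empty if and only if $\nu \geq \mu$, in which case it has dimension $2\langle \rho, \nu - \mu\rangle$. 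Taking $\nu = \lambda$ then exhibits an open dense piece of $\Grlmbar$ of the asserted dimension.

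For part (2), I would exploit the explicit form of the loop rotation action of $\kb^\times$, namely $z \cdot g(t) = g(zt)$. This preserves $\Gp$ and $\Gm$ as subgroups of $G((t^{-1}))$, hence preserves both $\overline{\Gr^\lambda}$ and $\Gr_\mu$ after descent to $\Gr$. The action scales the $t^{-k}$ coefficient of an element of $\Gm$ by $z^{-k}$, so $\Gm$ contracts to the identity as $z \to \infty$. Meanwhile $z^{w_0\mu}$ lies in the constant torus $T \subset \Gp$, so $t^{w_0\mu}$ is a $\kb^\times$-fixed point in $\Gr$. Combining these, the orbit $\Gr_\mu = \Gm \cdot t^{w_0\mu}$ contracts to $\{t^{w_0\mu}\}$, and the contraction restricts to the closed, $\kb^\times$-invariant subvariety $\Grlmbar$. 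Uniqueness of the fixed point is then automatic: any $\kb^\times$-fixed point of $\Grlmbar$ equals the limit of its own orbit, which is $t^{w_0\mu}$.

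The main technical obstacle is the affineness and dimension count for $\Gr_\mu$ inside the thick Grassmannian. These properties, while well known in the literature on \Mirkovic-Vilonen cycles and affine Grassmannian slices, require care to set up scheme-theoretically because $\Gm$ is pro-algebraic and the thick Grassmannian is infinite-dimensional; once they are in place, both parts follow cleanly from the orbit-theoretic description.
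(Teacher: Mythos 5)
The paper states this proposition without proof, labeling it as ``standard,'' so there is no argument in the source to compare against; one can only assess your proposal on its own terms. Your overall strategy is exactly what one expects, and parts (2) and the dimension count in part (1) are correct. However, there is one genuine slip in the setup of part (1).

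You write that the orbit map $\Gm \to \Gr_\mu$ ``identifies the orbit with an affine space of dimension $2\langle\rho,\mu\rangle$.'' That number is the dimension of the \emph{stabilizer}, not the orbit. By Lemma~\ref{le:LieStab}, $\operatorname{Lie}(\Gm_\mu) = \bigoplus_{\alpha\in\Delta_+}\bigoplus_{i=1}^{\langle\mu^*,\alpha\rangle} t^{-i}\fg_\alpha$, which has dimension $\sum_{\alpha>0}\langle\mu^*,\alpha\rangle = 2\langle\rho,\mu\rangle$. The orbit $\Gr_\mu = \Gm/\Gm_\mu$ is the quotient of an infinite-dimensional pro-unipotent group by this finite-dimensional unipotent stabilizer, hence is itself infinite-dimensional (a pro-affine space, not a finite-dimensional affine space). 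This is in fact the crucial feature of working in the thick Grassmannian, and the paper's identification $\O(\Gr_\mu)=\O(\Gm)^{\Gm_\mu}$ and the Hilbert series computations in the proof of Theorem~\ref{th:shiftedYangianQuantize} rely on it. The correct route to affineness of $\Gr_\mu$ is that it is a quotient of the pro-affine pro-unipotent group $\Gm$ by the unipotent subgroup $\Gm_\mu$, hence affine; no finite-dimensionality is available or needed. Fortunately this error is inert in your argument: you never use the claimed finite dimension of $\Gr_\mu$, instead obtaining $\dim\Grlmbar = 2\langle\rho,\lambda-\mu\rangle$ from the transversality of $\Gm$- and $\Gp$-orbits, which is fine. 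Part (2) is correct as written: loop rotation sends $g(t)\mapsto g(zt)$, contracts $\Gm$ to the identity as $z\to\infty$, fixes $t^{w_0\mu}$, and the uniqueness of the fixed point follows as you say. So the proposal is sound once the misstatement about $\dim\Gr_\mu$ is either excised or corrected to refer to the stabilizer.
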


\begin{example} \label{eg:Kleinian}
  If $\la=\mu+\al_i^\vee$, then $ \Grlmbar $ is isomorphic
  to the Kleinian singularity $\C^2/(\Z/n+2)$ where $ n= \langle \mu, \alpha_i \rangle$.
To see this, first we identify
  $$
  \C^2/(\Z/n+2) = \{ (u, v, w) : uv + w^{n+2} = 0 \}
  $$
  and then we define the isomorphism
  \begin{align*}
  \C^2/(\Z/n+2) &\rightarrow \Grlmbar \\
   (u,v,w) &\mapsto \phi_i \left( \left[ \begin{smallmatrix} 1 - wt^{-1}  & v t^{-(n+1)} \\ ut^{-1} & 1 + wt^{-1} + \dots + w^{n+1} t^{-(n+1)} \end{smallmatrix} \right] \right)t^{w_0 \mu}
  \end{align*}
  where $ \phi_i : SL_2 \rightarrow G $ denotes the $ SL_2 $ corresponding to $ \alpha_i $.
\end{example}

Let $ G((t^{-1}))_\mu $ denote the stabilizer of $ t^{w_0 \mu} $ inside of $ G((t^{-1}))$.   The following easy result describes the stabilizer on the Lie algebra level.
\begin{lemma} \label{le:LieStab}
 $Lie(G((t^{-1}))_\mu) = \mathfrak{t}[t] \oplus \bigoplus_{\alpha \in \Delta} t^{\langle \alpha, w_0 \mu \rangle} \fg_\alpha[t]$.
\end{lemma}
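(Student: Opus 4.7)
The plan is to unfold the definition of the stabilizer and decompose the resulting condition root-space by root-space.

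First I would identify the stabilizer group-theoretically: since $G((t^{-1}))$ acts on $\Gr$ by left multiplication on cosets, an element $g \in G((t^{-1}))$ fixes $t^{w_0\mu}$ if and only if $t^{-w_0\mu} g \,t^{w_0\mu} \in G[t]$. In other words, $G((t^{-1}))_\mu = t^{w_0\mu} G[t]\, t^{-w_0\mu} \cap G((t^{-1}))$. Passing to Lie algebras, this becomes
\[
Lie(G((t^{-1}))_\mu) = \{X \in \fg((t^{-1})) : \operatorname{Ad}(t^{-w_0\mu})(X) \in \fg[t]\}.
\]

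Next I would use the root space decomposition $\fg((t^{-1})) = \mathfrak{t}((t^{-1})) \oplus \bigoplus_{\alpha \in \Delta} \fg_\alpha((t^{-1}))$, which is preserved by $\operatorname{Ad}(t^{-w_0\mu})$ since $t^{-w_0\mu}$ lies in the torus $T((t^{-1}))$. On the torus part, $\operatorname{Ad}(t^{-w_0\mu})$ acts as the identity, so the condition reduces to $X_0 \in \mathfrak{t}((t^{-1})) \cap \fg[t] = \mathfrak{t}[t]$. On the root space $\fg_\alpha((t^{-1}))$, the torus element $t^{-w_0\mu}$ acts by the scalar $\alpha(t^{-w_0\mu}) = t^{-\langle \alpha, w_0\mu \rangle}$; thus if we write $X_\alpha = \sum_s c_s t^s \otimes e_\alpha$, the condition $\operatorname{Ad}(t^{-w_0\mu})(X_\alpha) \in \fg_\alpha[t]$ becomes $c_s = 0$ for $s < \langle \alpha, w_0\mu\rangle$, i.e. $X_\alpha \in t^{\langle \alpha, w_0\mu \rangle}\fg_\alpha[t]$.

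Assembling the contributions across the Cartan and root components yields exactly the asserted direct sum decomposition. There is no real obstacle here — the only care required is bookkeeping of signs: making sure that conjugation by $t^{-w_0\mu}$ (rather than $t^{w_0\mu}$) is what produces the shift by $+\langle \alpha, w_0\mu\rangle$ (rather than $-\langle \alpha, w_0\mu\rangle$) in the final answer.
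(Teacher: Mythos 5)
Your proof is correct and takes essentially the same approach as the paper: the paper's entire argument is the observation that $g \in G((t^{-1}))_\mu$ if and only if $t^{-w_0\mu} g t^{w_0\mu} \in G[t]$, after which it declares the result "immediate." You have simply spelled out the immediate part — passing to Lie algebras and decomposing along the $\operatorname{Ad}(T((t^{-1})))$-stable root space decomposition, with the signs handled correctly.
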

\begin{proof}
 The result follows immediately after observing that for $ g \in G((t^{-1}))$, we have $ g \in G((t^{-1}))_\mu $ if and only if $ t^{-w_0\mu} g t^{w_0 \mu} \in G[t]$.
\end{proof}

In what follows, we will need the following set-theoretic description
of $ \overline{\Gr^\lambda} $ due to Finkelberg-\Mirkovic \cite{FM}.
As we shall see, it is much trickier to find a description of this
variety with its natural reduced scheme structure.
\begin{proposition} \label{pr:SetTheoryGrlam}
Let $ g \in G((t^{-1})) $.  We have $ [g] \in \overline{\Gr^\lambda} $ if and only if $ \Delta_{\beta, v}^{(s)}(g) = 0 $ for all dominant weights $ \tau $, for all $ v \in V(\tau), \beta \in V(\tau)^* $ and for all $ s < \langle \lambda, w_0 \tau \rangle  $.
\end{proposition}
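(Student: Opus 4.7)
The plan is to prove both implications by reducing to direct computations on natural representatives, using the standard orbit decomposition $\overline{\Gr^\la} = \bigsqcup_{\nu \leq \la} \Gr^\nu$ over dominant coweights $\nu$.

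For necessity, I would begin by noting that the vanishing of each $\Delta^{(s)}_{\beta, v}$ is a closed condition on $g \in G((t^{-1}))$, so it suffices to verify it on a dense subset of each orbit $\Gr^\nu$ with $\nu \leq \la$ dominant. Taking a representative $g = p \cdot t^\nu$ for $p \in G[t]$ and expanding $v \in V(\tau)$ into weight components $v = \sum_\sigma v_\sigma$, the matrix coefficient factors as
\[ \Delta_{\beta, v}(g) = \sum_\sigma t^{\langle \nu, \sigma \rangle}\, \langle \beta, p v_\sigma \rangle, \]
where each $\langle \beta, p v_\sigma \rangle \in \C[t]$ because $p \in G[t]$. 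Since $\nu$ is dominant, $\min_\sigma \langle \nu, \sigma \rangle = \langle \nu, w_0 \tau \rangle$; since $\la - \nu$ is a nonnegative sum of simple coroots and $w_0 \tau$ is antidominant, this is further bounded below by $\langle \la, w_0 \tau \rangle$. Combining these bounds yields the required vanishing.

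For sufficiency, assume $g$ satisfies the stated conditions. Applying the Cartan decomposition for $G((t^{-1}))$, place $[g] \in \Gr^\nu$ for some dominant $\nu$; the goal becomes $\nu \leq \la$. I would then test the vanishing on the extremal generalized minors $\Delta_{w_0 \tau, w_0 \tau}$ (as defined earlier in the paper) for $\tau$ ranging over the fundamental weights. On a natural representative of $[g] \in \Gr^\nu$ — for instance $g = t^\nu$, up to left multiplication by $G[t]$ — each such minor reduces to a single monomial of degree $\langle \nu, w_0 \tau \rangle$ in $t$, so the vanishing condition forces $\langle \nu, w_0 \tau \rangle \geq \langle \la, w_0 \tau \rangle$. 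Varying $\tau$ over the fundamental weights and using the duality between the dominant weight cone and the positive coroot cone translates this system of inequalities into exactly $\nu \leq \la$, which gives $[g] \in \overline{\Gr^\la}$.

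The step I expect to be the main obstacle is the careful bookkeeping of Laurent-expansion conventions and the interaction of the representative-level vanishing with the right $G[t]$-action defining $\Gr$: the individual conditions $\Delta^{(s)}_{\beta, v}(g) = 0$ are not manifestly invariant under $g \mapsto gh$ for $h \in G[t]$, so one must check that the whole collection of conditions cuts out a subset of $G((t^{-1}))$ that descends to the expected closed subvariety of $\Gr$. Handling this properly — either by choosing representatives consistently across strata (e.g., of the form $p t^\nu$ on $\Gr^\nu$) or by verifying aggregate $G[t]$-invariance of the vanishing locus — is the delicate part; once it is pinned down, both directions reduce to the calculations sketched above.
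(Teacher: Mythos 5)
Your strategy is in the same spirit as the paper's: use the stratification $\overline{\Gr^\la}=\bigsqcup_{\nu\le\la}\Gr^\nu$, reduce to diagonal representatives $t^\nu$ via $G[t]$-biinvariance of the conditions, and compute degrees. (The paper packages exactly this into a valuation $k_\tau(g)$ that depends only on the $G[t]$-double coset of $g$ and is evaluated at $t^\la$.) You also correctly isolate $G[t]$-biinvariance as the delicate point. Two things need attention.

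First, the sign. Your necessity computation shows every power of $t$ occurring in $\Delta_{\beta,v}(pt^\nu)$, $p\in G[t]$, is $\ge\langle\nu,w_0\tau\rangle\ge\langle\la,w_0\tau\rangle$; in the convention $\Delta_{\beta,v}(g)=\sum_s\Delta^{(s)}_{\beta,v}(g)t^{-s}$ this says $\Delta^{(s)}_{\beta,v}(g)=0$ for $s>-\langle\la,w_0\tau\rangle=\langle\la,\tau^*\rangle$, which is the opposite of the range $s<\langle\la,w_0\tau\rangle$ in the statement. Your sufficiency step uses the same corrected range (only $s>\langle\la,\tau^*\rangle$ yields $\langle\nu,w_0\tau\rangle\ge\langle\la,w_0\tau\rangle$). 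This is not a defect in your argument — $s>\langle\la,\tau^*\rangle$ is what is used elsewhere in the paper (compare the range $s>\langle\la,\om_{i^*}\rangle$ in Proposition \ref{pr:J0gen}), and it is the only option that makes the conditions $G[t]$-biinvariant: for $h\in G[t]$, $\Delta^{(s)}_{\beta,hv}(g)=\sum_{j\ge0}\Delta^{(s+j)}_{\beta,(hv)_j}(g)$ preserves vanishing of high-$s$ coefficients but destroys vanishing of low-$s$ ones (already $g=1$ against a unipotent $h$ with entry $t^{N}$ shows this). So you should explicitly fix the inequality; doing so resolves the ``delicate part'' you flagged without any aggregate-invariance argument.

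Second, and more serious, is a genuine gap in sufficiency: ``Applying the Cartan decomposition for $G((t^{-1}))$, place $[g]\in\Gr^\nu$.'' This is false for arbitrary $g\in G((t^{-1}))$: the thick Grassmannian is not covered by the $\Gr^\nu$, which exhaust only the thin Grassmannian $G[t,t^{-1}]/G[t]$. You must first show the hypotheses force $[g]$ into the thin one. This does follow: the vanishing of $\Delta^{(s)}_{\beta,v}(g)$ for all $s>\langle\la,\tau^*\rangle$ says that each $\Delta_{\beta,v}(g)\in\C((t^{-1}))$ has its $t^{-1}$-expansion truncated, hence is a Laurent polynomial, so $g$ acts on each $V(\tau)$ by a matrix over $\C[t,t^{-1}]$ and $[g]$ lies in the thin Grassmannian. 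Only after this does the decomposition $G[t,t^{-1}]=\bigsqcup_\nu G[t]t^\nu G[t]$ apply and your extremal-minor computation close the argument. (The paper's phrase ``if such a minimum exists'' is silently pointing at the same issue.)
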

\begin{proof}
 Fix $ \tau $ and let $ k $ be the minimal $ s $ such that there exists $ \beta \in V(\tau)^*, v \in V(\tau) $ with $ \Delta_{\beta, v}^{(s)} (g) \ne 0 $ (if such a minimum exists).  It is easy to see that $ k $ only depends on the $G[t] $ double coset containing $ g $.  Thus if $ [g] \in \Gr^\lambda $, we have that $ k = \langle \lambda, w_0 \tau \rangle $. The result follows.
\end{proof}
The proof makes it clear that the Proposition holds even if $ \tau $ only ranges over a set of dominant weights which spans (over $ \mathbb Q $) the weight lattice.

\subsection{Symplectic structure on the affine Grassmannian} \label{se:SymplecticStructure}
There is a non-degenerate pairing on $\fg((t^{-1})) $ coming from residue
and the invariant form on $\fg $.  Hence the Lie algebras $ \fg[t]
$, $ t^{-1}\fg[[t^{-1}]] $, and $ \fg((t^{-1})) $ form a Manin triple (see
\cite{Dr}).  This induces a Poisson-Lie structure on $G((t^{-1}))$ with
$G[t]$ and $\Gm$ as Poisson subgroups.  In particular, it coinduces
a Poisson structure on $\Gr$, by standard calculations which date back
to work of Drinfeld \cite{Drpoisson}.

Let us state a couple of results concerning the interaction between
this symplectic structure and the geometry considered in the previous
section.  These results were originally obtained by \Mirkovic \cite{MPC}.

\begin{theorem}
The subvarieties $ \Gr^\lambda_\mu = Gr^\lambda \cap \Gr_\mu $ are symplectic leaves of $ \Gr$.
\end{theorem}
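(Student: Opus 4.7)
My plan is to combine a dimension count with a direct computation of the Poisson bivector coming from the Manin triple.

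First I would record that $\Gr^\lambda_\mu$ is a smooth connected submanifold of $\Gr$ of even dimension $2\langle\rho,\lambda-\mu\rangle$: smoothness follows from the transversality of $\Gr_\mu$ with every $\Gr^\nu$ (asserted in the previous subsection), connectedness from the $\kb^\times$-contractibility of $\Grlmbar$ (from the previous proposition) restricted to its open subset $\Gr^\lambda_\mu$, and the dimension is exactly what one expects of a symplectic leaf.

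Next I would argue that both $\Gr^\lambda$ and $\Gr_\mu$ are Poisson subvarieties of $\Gr$: they are the orbits on $\Gr = G((t^{-1}))/G[t]$ of the Poisson subgroups $G[t]$ and $G_1[[t^{-1}]]$ of $G((t^{-1}))$ (acting by left multiplication), and for Poisson-Lie actions the orbits of Poisson subgroups are Poisson subvarieties --- a standard consequence of the Lu--Weinstein moment map formalism. Hence the transverse intersection $\Gr^\lambda_\mu$ is again a Poisson subvariety, and thus a union of symplectic leaves.

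The remaining --- and principal --- task is to show that the induced Poisson structure is non-degenerate on $\Gr^\lambda_\mu$; combined with the dimension match from the first step, this forces $\Gr^\lambda_\mu$ to be a single leaf. For this I would fix $[g] \in \Gr^\lambda_\mu$, identify $T_{[g]}\Gr \cong \fg((t^{-1}))/\mathrm{Ad}_g \fg[t]$ and its dual via the residue pairing (using that $\fg[t]$ and $\gm$ are complementary Lagrangians in $\fg((t^{-1}))$), and compute $\pi^\sharp_{[g]}$ using the standard Manin-triple formula involving the $r$-matrix twisted by left translation. The expected output is that the image of $\pi^\sharp_{[g]}$ is precisely $T_{[g]}\Gr^\lambda \cap T_{[g]}\Gr_\mu$, which coincides with $T_{[g]}\Gr^\lambda_\mu$ by the transversality from the first step. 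The main obstacle is this final computation, which is mechanical but requires careful bookkeeping with the Manin-triple conventions, the self-orthogonality of $\fg[t]$ and $\gm$, and the stabilizer description from Lemma 2.2.
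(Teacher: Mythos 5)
Your proposal takes a genuinely different route from the paper.  The paper's proof is two citations: connectedness of $\Gr^\lambda_\mu$ comes from Richardson's theorem on intersections of double cosets (applicable since $\fg((t^{-1}))=\fg[t]\oplus t^{-1}\fg[[t^{-1}]]$), and the identification of these intersections as symplectic leaves is \cite[Cor.\ 2.9]{LY}, which is exactly a structure theorem for leaves on homogeneous spaces of a Drinfeld double.  You instead propose a direct verification via the Poisson bivector, which in principle would give a self-contained argument.  However, as written, the proposal has two real gaps beyond the acknowledged incompleteness of the bivector computation.

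First, the connectedness argument does not work.  The $\kb^\times$-action contracts $\Grlmbar$ to $t^{w_0\mu}$, but this limit point lies in the closed stratum $\Gr^\mu_\mu$, not in the open stratum $\Gr^\lambda_\mu$ (unless $\la=\mu$).  So the contracting flow does not restrict to $\Gr^\lambda_\mu$: orbits leave the open set, and contractibility of $\Grlmbar$ alone does not imply connectedness of a dense open subset unless you additionally know $\Grlmbar$ is irreducible.  You either need that extra input, or Richardson's theorem as in the paper.

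Second, the claim that ``for Poisson-Lie actions the orbits of Poisson subgroups are Poisson subvarieties'' is false in that generality.  Take $\mathbb{R}$ with the trivial Poisson-Lie structure acting on the standard symplectic $\mathbb{R}^2$ by translation in one coordinate: this is a Poisson action of a Poisson subgroup of itself, yet the orbits are Lagrangian lines, not Poisson submanifolds.  The statement you want --- that $G[t]$-orbits and $\Gm$-orbits on $D/G[t]$ are Poisson subvarieties --- is true, but it is a nontrivial feature of the Manin-triple/Drinfeld-double setup and is essentially part of the content of Lu--Yakimov, not a consequence of the Lu--Weinstein moment map formalism per se.  So this step needs either a citation to \cite{LY} (which collapses your argument back to the paper's) or a direct verification, which would best be folded into your step 3 bivector computation at the outset rather than invoked separately.

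If those two points are repaired, the remaining bivector computation in step 3 is a sound strategy, but it is the bulk of what \cite[Cor.\ 2.9]{LY} does, so the payoff over simply citing that result would be a self-contained, explicit account rather than a shorter proof.
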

\begin{proof}
First we note that $ \Gr^\lambda_\mu $ are connected by
\cite[1.4]{Rich}, since $\fg((t^{-1}))=\fg[t]\oplus t^{-1}\fg[[t^{-1}]]$. The argument is stated there for finite dimensional
groups, but carries through to the loop situation without issues.  Then the result follows from \cite[Corollary 2.9]{LY}.
\end{proof}
These are not all symplectic leaves of $\Gr$, since not every $\Gm$-orbit
contains a point $t^{w_0\mu}$ and not every $ G[t] $ orbit contains a point $ t^\lambda $.  A general symplectic leaf which lies in the thin affine Grassmannian is of the form $ \Gr^\lambda \cap \Gm gt^{w_0\mu} $ where $ g \in G $.

Let $ S^\mu = N((t^{-1})) t^{w_0\mu}$. An {\bf MV cycle} is a component of $
\overline{\Gr^{\lambda}} \cap S^\mu $.  By \Mirkovic -Vilonen, these MV cycles give a basis for weight spaces of irreducible representations of the Langlands dual group.  As we now see the MV cycles are Lagrangians in $ \Grlmbar $.

\begin{proposition}
$\overline{\Gr^\lambda} \cap S^\mu $ is a Lagrangian subvariety of $ \Grlmbar $.
\end{proposition}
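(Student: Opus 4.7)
The plan is to verify three things in turn: containment $\overline{\Gr^\lambda}\cap S^\mu\subseteq \Grlmbar$, half-dimensionality, and isotropy.

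First I would show $S^\mu\subseteq \Gr_\mu$. Given $n\in N((t^{-1}))$, since $N$ is unipotent, separating the polynomial and strictly-polar parts of Laurent series at the Lie algebra level and exponentiating gives a factorization $n=n_-n_+$ with $n_-\in N_1[[t^{-1}]]$ and $n_+\in N[t]$. Then
$$nt^{w_0\mu} = n_- t^{w_0\mu}\cdot\bigl(t^{-w_0\mu}n_+t^{w_0\mu}\bigr).$$
For any positive root $\alpha$ we have $\langle\alpha,-w_0\mu\rangle\ge 0$ by dominance of $\mu$, so conjugation by $t^{w_0\mu}$ maps $\fn[t]$ into itself. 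Thus $t^{-w_0\mu}n_+t^{w_0\mu}\in N[t]\subseteq G[t]$, and in $\Gr$ we have $[nt^{w_0\mu}]=[n_-t^{w_0\mu}]\in \Gm\cdot t^{w_0\mu}=\Gr_\mu$, as required.

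Second, the dimension count is the Mirković--Vilonen dimension formula: every irreducible component of $\overline{\Gr^\lambda}\cap S^\mu$ has dimension $\langle\rho,\lambda+w_0\mu\rangle = \langle\rho,\lambda-\mu\rangle$, which is exactly half the dimension $2\langle\rho,\lambda-\mu\rangle$ of $\Grlmbar$ from the proposition above.

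Third, isotropy. The Poisson structure on $\Gr$ is coinduced from the Manin triple $(\fg((t^{-1})),\fg[t],t^{-1}\fg[[t^{-1}]])$ with the residue pairing $(X,Y)\mapsto \operatorname{Res}(X,Y)\,dt$. Because the invariant form on $\fg$ vanishes on $\fn\times\fn$, the loop subalgebra $\fn((t^{-1}))$ is isotropic with respect to this pairing. I would then invoke the standard Poisson-Lie principle that, given a Manin triple and an isotropic Lie subalgebra $\fk$ of the ambient Lie algebra, the orbits of the corresponding subgroup $K$ on the Poisson homogeneous space $G((t^{-1}))/G[t]$ are isotropic; applied with $K=N((t^{-1}))$, this shows $S^\mu$ meets each symplectic leaf $\Gr^\nu\cap\Gr_\mu$ $(\mu\le\nu\le\lambda)$ in an isotropic subvariety. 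Combined with the half-dimensional count, each component is Lagrangian in $\Grlmbar$.

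The main obstacle is step three: one must justify the Poisson-Lie isotropy principle in the infinite-dimensional loop-group setting, either by citing an existing reference (\Mirkovic{} \cite{MPC}) or by a direct tangent-space computation. In the direct approach, the Poisson bivector at $[gt^{w_0\mu}]$ is built from the projection $\fg((t^{-1}))\to t^{-1}\fg[[t^{-1}]]$ along $\fg[t]$ paired against itself via the residue form, and the vanishing one needs reduces cleanly to the fact that the residue pairing restricts to zero on $\fn((t^{-1}))\times\fn((t^{-1}))$.
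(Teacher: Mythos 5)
Your first two steps coincide in substance with the paper's: both establish $S^\mu\subseteq\Gr_\mu$ by factoring $N((t^{-1}))=N_1[[t^{-1}]]N[t]$ and observing that $N[t]$ stabilizes $t^{w_0\mu}$ (the paper deduces this from its Lemma on $\mathrm{Lie}(G((t^{-1}))_\mu)$, while you conjugate $n_+$ by $t^{-w_0\mu}$ directly; these are the same fact), and both cite the Mirkovi\'c--Vilonen dimension formula for half-dimensionality.

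The divergence is in the isotropy step, where your argument has a real gap that you correctly flag yourself. The ``Poisson-Lie isotropy principle'' --- that an isotropic Lie subalgebra $\fk\subseteq\fd$ of the Manin triple yields isotropic $K$-orbits on the Poisson homogeneous space $D/G_+$ --- is plausible, but it is not a black box one can cite, and it requires care precisely because $N((t^{-1}))$ is not a Poisson-Lie subgroup of $G((t^{-1}))$; the Poisson bivector on $\Gr$ is the pushforward of $r^L-r^R$, and whether its restriction to $\fn((t^{-1}))$-directions vanishes is not an immediate consequence of $(\fn,\fn)=0$ --- one would need to unwind the bivector on the quotient in the same spirit as the computation in the paper's Proposition on minors. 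The paper avoids this entirely with a short, self-contained argument: since $\Grlmbar$ is affine, coisotropy (hence Lagrangian, given half-dimensionality) amounts to checking that the ideal of $\overline{\Gr^\lambda}\cap S^\mu$ is closed under Poisson bracket; this ideal is generated by functions of strictly negative weight for the torus $\rho^\vee\colon\C^\times\to G$ (because $S^\mu$ is a $\rho^\vee$-attracting set), and since $\rho^\vee$ acts by Poisson automorphisms, the bracket of two negative-weight functions is again of negative weight. That weight argument is both shorter and does not depend on any unproven Poisson-Lie lemma; you would do well to replace your third step by it, or else actually carry out the tangent-space computation you gesture at.
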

\begin{proof}
First we prove that $ \overline{\Gr^\la} \cap S^\mu \subset \Grlmbar $.  Since $ N$ is unipotent, we have that $ N((t^{-1})) =  N_1[[t^{-1}]] N[t]$.  Now by Lemma \ref{le:LieStab}, we have that $  N[t]t^{w_0 \mu} = t^{w_0 \mu} $.  Hence $N((t^{-1})) t^{w_0 \mu} = N_1[[t^{-1}]] t^{w_0 \mu} $ and thus $ S^\mu \subset \Gr_\mu $.

From \cite{MV}, $\dim \overline{\Gr^\lambda} \cap S^\mu = \langle \rho, \lambda-\mu \rangle $ and thus the intersection $\overline{\Gr^\lambda} \cap S^\mu $ is half-dimensional in $\Grlmbar$,
Hence it is Lagrangian if and only if it is coisotropic.  The variety
$\Grlmbar$ is affine, and so it suffices to check that the Poisson
bracket of any two functions that vanish on $\overline{\Gr^\lambda} \cap S^\mu$
vanishes there as well.  The functions vanishing on $S^\mu\cap
\overline{\Gr^\lambda}$ are generated by all functions of negative weight under the
action of the coweight $\rho^\vee:\C^\times \to G$.  Since that
action preserves the Poisson structure, the Poisson bracket of two
negative weight functions is again negative weight; this completes the proof.
\end{proof}

It is natural to ask whether $ \Grlmbar $ has a symplectic resolution.  Let us temporarily assume that $ G $ is of adjoint type and let us fix an sequence $\vlam= (\lambda_1, \dots, \lambda_n) $ of fundamental coweights such that $ \lambda = \lambda_1 + \cdots + \lambda_n $.  Then we have the open and closed convolutions
$$ \Gr^{\vlam} := Gr^{\lambda_1} \tilde{\times} \cdots \tilde{\times} \Gr^{\lambda_n}, \overline{\Gr^{\vlam}} := \overline{Gr^{\lambda_1}} \tilde{\times} \cdots \tilde{\times} \overline{\Gr^{\lambda_n}} $$
along with the convolution morphisms $ m :\Gr^{\vlam} \rightarrow \overline{\Gr^\lambda} $ and $ \bar{m} : \overline{\Gr^{\vlam}} \rightarrow \overline{\Gr^\lambda} $ .  Let \[ \Gr^{\vlam}_\mu := m^{-1}(\Gr_\mu) \qquad\Gr^{\overline{\vlam}}_\mu := \bar{m}^{-1}(\Gr_\mu) .\]

Recall that a normal variety $X$ with a fixed symplectic
structure $\Omega$
on its smooth locus is said to have
{\bf symplectic singularities} if, locally on $X$, there are
  resolutions of singularities $p\colon U\to X$ where $p^*\Omega$ is
  the restriction of a closed  2-form on $U$ (which is not
  assumed to be non-degenerate on the exceptional locus).

  A variety $X$ is said to have {\bf terminal singularities} if there is a
  resolution of singularities of $X$ such that each irreducible
  exceptional fiber has positive discrepancy, that is, $X$ is as close
  to being smoothly resolved as is crepantly possible.  A {\bf
    terminalization} $X\to Y$ is a map which is birational,
  proper, and crepant with $X$ having terminal singularities.  We say
  a variety $X$ is {\bf $\mathbb{Q}$-factorial} if every Weil divisor on $X$
  has an integer multiple which is Cartier.

\begin{theorem} \label{th:terminalization}
The variety $ \Grlmbar $ has symplectic singularities, and
$\Gr^{\overline{\vlam}}_\mu $ is a
$\mathbb{Q}$-factorial terminalization of $ \Grlmbar $.
\end{theorem}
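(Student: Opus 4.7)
The plan is to use the convolution morphism $\bar{m}\colon \Gr^{\overline{\vlam}}_\mu \to \Grlmbar$ to transfer properties from the building-block slices $\overline{\Gr^{\lambda_i}}$ (associated to fundamental coweights) to $\Grlmbar$. First I would record the basic geometry: $\Gr^{\overline{\vlam}}_\mu$ is a normal, irreducible affine variety of dimension $2\langle \rho, \lambda-\mu\rangle$, obtained as an iterated twisted product of the $\overline{\Gr^{\lambda_i}}$; the convolution map $\bar{m}$ is proper, and restricts to an isomorphism over the open leaf $\Gr^\lambda_\mu$, hence is birational.

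For symplectic singularities I would argue in two stages. First, for each fundamental coweight $\lambda_i$ the slice $\overline{\Gr^{\lambda_i}}$ has symplectic singularities: it is smooth in the minuscule case, and a known example (admitting a small or crepant resolution) otherwise. The iterated twisted-product structure of $\Gr^{\overline{\vlam}}_\mu$ along $G[t]$-torsors propagates this property to the entire convolution, with the symplectic form inherited from the Poisson structure of Section \ref{se:SymplecticStructure}. On the locus over $\Gr^\lambda_\mu$ the pullback $\bar{m}^\ast \Omega_{\Grlmbar}$ agrees with this form, so composing any resolution of $\Gr^{\overline{\vlam}}_\mu$ with $\bar{m}$ yields a resolution of $\Grlmbar$ over which the symplectic form extends, verifying Beauville's criterion for $\Grlmbar$.

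For the terminalization statement I would treat crepancy, terminality, and $\mathbb{Q}$-factoriality separately. Crepancy follows from the previous step: $\bar{m}^\ast \Omega_{\Grlmbar}$ extends to the global symplectic form on $\Gr^{\overline{\vlam}}_\mu$, so $\bar{m}$ is a symplectic morphism. Terminality then reduces to a dimension count: by Proposition \ref{pr:SetTheoryGrlam}, $\Grlmbar \setminus \Gr^\lambda_\mu = \bigsqcup_{\mu \leq \nu < \lambda} \Gr^\nu_\mu$, where each stratum has codimension $2\langle \rho, \lambda-\nu\rangle \geq 2$ in $\Grlmbar$, so the standard discrepancy formula for symplectic varieties forces any exceptional divisor of $\bar{m}$ to have positive discrepancy.

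The main obstacle I anticipate is $\mathbb{Q}$-factoriality of $\Gr^{\overline{\vlam}}_\mu$, which does not follow formally from the symplectic arguments above. The natural approach is induction along the projections realizing $\Gr^{\overline{\vlam}}_\mu$ as an iterated fibration over the convolution of $(\lambda_1,\ldots,\lambda_{n-1})$, reducing to $\mathbb{Q}$-factoriality of each individual $\overline{\Gr^{\lambda_i}}$ together with a careful analysis of how the twisted-product construction affects the class group. In the minuscule case each factor is smooth and hence automatically $\mathbb{Q}$-factorial, but assembling these inputs uniformly across all simple types, and handling the non-minuscule factors (where one must genuinely compute a class group rather than a Picard group), is where I expect to invest the most effort.
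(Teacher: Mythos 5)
Your overall framework (transferring properties across $\bar m$, treating crepancy/terminality/$\mathbb{Q}$-factoriality separately, reducing to the fundamental coweights) matches the paper's, but the proposal misses the single lemma that drives the whole argument and also conflates two notions of terminality.

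The missing lemma is that the singular locus of $\Gr^{\overline{\vlam}}_\mu$ has codimension $\geq 4$. The paper proves this by noting that $\Gr_\mu$ is transverse to every $G[t]$-orbit (so the singular locus codimension is unchanged on passing to the slice), then reduces to a single fundamental coweight $\om_i$, where the claim follows because $\om_i-\al_j$ is never dominant, so $\langle\rho^\vee,\om_i-\nu\rangle\geq 2$ for any $\Gr^\nu\subset\Gr^{\overline{\om_i}}$. Once you have codimension $\geq 4$, Beauville's criterion (normal and regular in codimension 3 plus a symplectic form on the smooth locus) gives symplectic singularities without your step of arguing that each $\overline{\Gr^{\lambda_i}}$ ``has symplectic singularities'' --- which as stated doesn't quite make sense, since $\overline{\Gr^{\lambda_i}}$ is a projective, not Poisson-affine, variety. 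Similarly, Namikawa's theorem says regular-in-codimension-3 symplectic singularities are automatically terminal, so terminality of $\Gr^{\overline{\vlam}}_\mu$ is a property of the variety itself and needs no discrepancy computation. Your ``dimension count'' instead estimates stratum codimensions in the \emph{base} $\Grlmbar$ (where $\geq 2$ is indeed all you get) and then invokes a discrepancy formula for $\bar m$; that conflates terminality of $\Gr^{\overline{\vlam}}_\mu$ with properties of the morphism $\bar m$, which are separate. For crepancy of $\bar m$ the paper does argue directly: any exceptional divisor must lie over $\Gr^{\la-\al_i}$ (everything deeper has codimension $\geq 4$), and there the germ of $\bar m$ is the minimal resolution of a Kleinian singularity (Example \ref{eg:Kleinian}), which is crepant. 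Your observation that $\bar m^\ast\Omega$ extends to a global 2-form is a reasonable alternative route to crepancy, but you would still need the codimension statement to isolate where exceptional divisors can live.

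Finally, you correctly flag $\mathbb{Q}$-factoriality as the sticking point but leave it open. The paper's resolution is short: local singularities of $\Gr^{\overline{\vlam}}_\mu$ are products of local singularities of the $\Gr^{\overline{\om_i}}$, so it suffices to treat a single $\Gr^{\overline{\om_i}}$. Its Weil divisor class group equals that of the open orbit $\Gr^{\om_i}$, which is an affine bundle over $G/P_i$, and $\operatorname{Cl}(G/P_i)\cong\Z$; since $\Gr^{\overline{\om_i}}$ is projective, some nonzero Weil divisor class is Cartier, and in a rank-one group that already gives $\mathbb{Q}$-factoriality. This is considerably lighter than the class-group computation of twisted products you were anticipating.
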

\begin{proof}
First, we claim that $\Gr^{\overline{\vlam}}_\mu $ has singular locus
in codimension $\geq 4$.  Since $\Gr_\mu$ is transverse to every
$G[t]$-orbit, the codimension of the singular locus cannot jump when
we pass to $\Gr^{\overline{\vlam}}_\mu$, so we need only
establish the same result for $\Gr^{\overline{\vlam}}$, for which it
suffices to consider the case of a fundamental coweight.  If $\om_i$
is a fundamental coweight, and $\nu$ is a dominant coweight such that $\Gr^\nu\subset
\Gr^{\overline{\om_i}}$, then we have that $\rho^\vee(\om_i-\nu)\geq 2$,
since $\om_i-\al_j$ is never dominant.  Thus, the singular locus $\Gr^\nu$ has
codimension at least $4$.

As Beauville notes \cite[(1.2)]{Beau}, since  $\Gr^{\overline{\vlam}}_\mu $ is regular in
codimension 3 and normal, the existence of a symplectic form on its
smooth locus implies that it has symplectic singularities.  Since we
have a Poisson map $\Gr^{\overline{\vlam}}_\mu\to \Grlmbar $, this
variety also has symplectic singularities.  By a result of Namikawa \cite{Nanote},
this regularity in codimension 3 also implies that $\Gr^{\overline{\vlam}}_\mu $ is
terminal.

Since each local singularity in $\Gr^{\overline{\vlam}}_\mu $ is a
local singularity in $\Gr^{\overline{\vlam}}$, and these are the
product of local singularities in $\Gr^{\overline{\om_i}}$, we need only
prove $\mathbb{Q}$-factoriality in this case.  The group of Weil divisors of
$\Gr^{\overline{\om_i}}$ is the same as that of $\Gr^{{\om_i}}$ which is an
affine bundle over $G/P_i$ where $P_i$ is the maximal parabolic
containing all negative simple root spaces but $\mathfrak{g}_{-\al_i}$.  Thus,
the Weil divisor group of $G/P_i$ is isomorphic to $\Z$.

Since $\Gr^{\overline{\om_i}}$ is projective, {\it some} Weil divisor on
$\Gr^{\overline{\om_i}}$ is Cartier. Thus, the group
generated by any non-trivial Weil divisor must intersect the image of
the Cartier divisors, and so $\Gr^{\overline{\om_i}}$ is
$\mathbb{Q}$-factorial\footnote{We thank Alexander Braverman for
  suggesting this portion of the argument to us.}.

Furthermore, the map $\Gr^{\overline{\vlam}}_\mu \to \Grlmbar$
well-known to be proper and birational.  The preimage of $\Gr^\mu$ for
$\mu\neq \la, \la-\al_i$ has codimension $\geq 4$, so any exceptional
divisor must be the closure of a component of the preimage of
$\Gr^{\la-\al_i}$.  The coefficients of these divisors in the
discrepancy can thus be computed locally in a neighborhood of $x\in
\Gr^{\la-\al_i}$, but the germ of the map is equivalent to the minimal
resolution of a Kleinian singularity by Example \ref{eg:Kleinian}.  The Kleinian singularities are known to be crepant.
\end{proof}

An obvious question is when $ \Grlmbar $ has a symplectic resolution.  First, we make the following conjecture.

\begin{conjecture}
  Any symplectic resolution of $ \Grlmbar $ is of the form
  $\Gr^{\overline{\vlam}}_\mu$.
\end{conjecture}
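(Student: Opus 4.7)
The plan is to exploit the theory of conical symplectic singularities due to Beauville, Namikawa, and Kaledin, which guarantees that the $\mathbb{Q}$-factorial terminalizations of a conical symplectic variety form a finite set linked by Mukai flops. The goal is to identify this set with the collection of varieties $\Gr^{\overline{\vlam'}}_\mu$ as $\vlam'$ ranges over all orderings of a fixed multiset of fundamental coweights summing to $\la$. Since $\Grlmbar$ is conical by the proposition preceding Example \ref{eg:Kleinian}, and any symplectic resolution $X\to \Grlmbar$ is automatically a $\mathbb{Q}$-factorial terminalization (smoothness trivially implies $\mathbb{Q}$-factoriality and terminality, while crepancy follows from the fact that the top power of the symplectic form is a nowhere-vanishing volume form), it suffices to start with the terminalization $\Gr^{\overline{\vlam}}_\mu$ of Theorem \ref{th:terminalization} and show that every variety obtained from it by iterated Mukai flops is again of the form $\Gr^{\overline{\vlam'}}_\mu$.

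The main step is an explicit analysis of the codimension-two exceptional loci of $m\colon \Gr^{\overline{\vlam}}_\mu \to \Grlmbar$. By the argument at the end of Theorem \ref{th:terminalization}, the only such loci sit over $\Gr^{\la - \al_i}$ and have local structure isomorphic to the minimal resolution of a Kleinian singularity $\C^2/(\Z/(n+2))$ multiplied by a smooth factor coming from the remaining convolution steps. I would expect each such locus to be supported where exactly one adjacent pair of factors $\la_k,\la_{k+1}$ in $\vlam$ contributes to the corresponding $\al_i$-direction, and the Mukai flop along that locus should correspond to swapping those two entries, producing $\Gr^{\overline{\vlam'}}_\mu$ where $\vlam'$ differs from $\vlam$ by an adjacent transposition. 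Combined with Namikawa's theorem that the flop graph of $\mathbb{Q}$-factorial terminalizations of a conical symplectic variety is connected, this would show that every such terminalization, and in particular every symplectic resolution, is isomorphic to some $\Gr^{\overline{\vlam'}}_\mu$.

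The hardest part will be globalizing this local flop picture to an honest isomorphism between the flopped variety and $\Gr^{\overline{\vlam'}}_\mu$, rather than merely a local comparison. I would approach this by means of a Beilinson-Drinfeld type deformation: letting the $n$ marked points on the curve vary, the convolution $\Gr^{\overline{\vlam}}_\mu$ extends to a family over $\mathbb{A}^n$, and exchanging the positions of two adjacent points realizes the transposition of the corresponding entries of $\vlam$ as a birational isomorphism between the two special fibers which, fiberwise, agrees with the local Kleinian flop. A secondary issue is that not every ordering $\vlam'$ gives a smooth $\Gr^{\overline{\vlam'}}_\mu$: in general only orderings consisting of minuscule coweights produce smooth convolutions. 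Thus the conjecture should be read as saying that among the $\mathbb{Q}$-factorial terminalizations parametrized by such orderings, the smooth ones are precisely the symplectic resolutions; establishing which orderings give smooth varieties is a separate, combinatorial question about the convolution diagrams $\overline{\Gr^{\la_i}}$ for fundamental $\la_i$.
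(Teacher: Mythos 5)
This statement is posed as a \emph{conjecture} in the paper, with no proof given, so there is no argument to compare yours against. What you have written is a plausible roadmap of how one might try to prove it, and your high-level strategy (every symplectic resolution is a $\mathbb{Q}$-factorial terminalization; by Namikawa's Poisson-deformation theory the $\mathbb{Q}$-factorial terminalizations of a conical symplectic singularity are parametrized by chambers of a hyperplane arrangement and connected by Mukai flops; so it suffices to show the set $\{\Gr^{\overline{\vlam'}}_\mu\}$ is closed under flops) is indeed the natural line of attack, and the authors themselves point to this circle of ideas in Section \ref{se:universality}. The preliminary reduction, that a symplectic resolution is automatically a $\mathbb{Q}$-factorial terminalization, is correct.

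However, the core of your argument is exactly where the real difficulty lies, and you leave it at the level of expectation rather than proof. Two gaps deserve emphasis. First, the claim that each codimension-two exceptional locus of $\Gr^{\overline{\vlam}}_\mu\to\Grlmbar$ corresponds to one adjacent pair $(\la_k,\la_{k+1})$ and that the associated Mukai flop yields $\Gr^{\overline{\vlam'}}_\mu$ for the transposed ordering needs a genuine global isomorphism, not just a local Kleinian comparison; the Beilinson--Drinfeld degeneration idea is suggestive but it is not clear that the limit over colliding points produces the flopped convolution variety rather than the original one or something else. Second, even granting that adjacent transpositions give flops, one would need to know that \emph{all} chambers of Namikawa's arrangement are hit by orderings of $\vlam$. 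The arrangement has on the order of $\prod_i \la_i!$ chambers coming from the Namikawa Weyl group $S_{\la,\mu}=\prod_{i:m_i>0} S_{\la_i}$, while the distinct orderings of the multiset $\vlam$ number $n!/\prod_i\la_i!$; these counts do not obviously agree, and in some small cases they genuinely differ, so a priori some chambers might correspond to terminalizations that are not of convolution type (or several orderings might land in the same chamber). Resolving this mismatch, i.e.\ showing the assignment ``ordering $\mapsto$ chamber'' is surjective, is part of what makes the statement a conjecture rather than a theorem.
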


We can easily see when $\Gr^{\overline{\vlam}}_\mu$ is actually a resolution.

\begin{theorem}
The following are equivalent.
\begin{enumerate}
\item $ \Grlmbar$ possesses a symplectic resolution of singularities.
\item $\Gr^{\overline{\vlam}}_\mu$ is smooth and thus is a symplectic
  resolution of singularities of $ \Grlmbar$.
\item $ \Gr^{\vlam}_\mu = \Gr^{\overline{\vlam}}_\mu$.
\item There do not exist coweights $ \nu_1, \dots, \nu_n $ such that $ \nu_1 + \cdots + \nu_n = \mu $,  for all $k $, $ \nu_k $ is a weight of $ V(\lambda_k) $ and for some $ k $, $ \nu_k $ is a not an extremal weight of $ V(\lambda_k) $.
\end{enumerate}
\end{theorem}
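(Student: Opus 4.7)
The implication $(2) \Rightarrow (1)$ is immediate. For $(1) \Rightarrow (2)$, I would invoke the following result of Namikawa (see also Fu and Kaledin): a symplectic variety $X$ admits a symplectic resolution if and only if any of its $\mathbb{Q}$-factorial terminalizations is smooth. Combined with Theorem \ref{th:terminalization}, which identifies $\Gr^{\overline{\vlam}}_\mu \to \Grlmbar$ as such a terminalization, this gives $(1) \Rightarrow (2)$ (and the smooth terminalization is itself a resolution). The implication $(3) \Rightarrow (2)$ is also immediate, since $\Gr^{\vlam}$ is smooth as an iterated convolution of the smooth $G[t]$-orbits $\Gr^{\lambda_k}$, and $(3)$ places $\Gr^{\overline{\vlam}}_\mu$ inside this smooth open stratum.

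For $(3) \Leftrightarrow (4)$, I would analyze $T$-fixed points. The $T$-fixed locus of $\Gr^{\overline{\vlam}}$ consists of tuples $(t^{\sigma_1}, \ldots, t^{\sigma_n})$ with each $\sigma_k$ an integral point in the weight polytope of $V(\lambda_k)$; such a tuple lies in $\Gr^{\vlam}$ precisely when each $\sigma_k$ is extremal, i.e. lies in $W\lambda_k$. The convolution morphism $\bar m$ sends the tuple to $t^{\sigma_1 + \cdots + \sigma_n}$, and one checks that $t^{w_0\mu}$ is the unique $T$-fixed point of $\Gr_\mu$ (using the pro-unipotence of $\Gm$). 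Hence the tuple lies in $\Gr^{\overline{\vlam}}_\mu$ iff $\sum_k \sigma_k = w_0\mu$. Writing $\nu_k = w_0\sigma_k$ and using $W$-stability of the weight set of $V(\lambda_k)$ together with $W$-invariance of extremality, this becomes exactly the negation of $(4)$. For the reverse direction, any $x \in \Gr^{\overline{\vlam}}_\mu \setminus \Gr^{\vlam}_\mu$ flows, under the contracting one-parameter subgroup combining loop rotation with a regular cocharacter $\rho^\vee\colon \C^\times \to T$, to a $T$-fixed limit; this action preserves the convolution stratification and contracts $\Gr_\mu$ to $t^{w_0\mu}$, so the limit is a $T$-fixed boundary point of $\Gr^{\overline{\vlam}}_\mu$.

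For the remaining $(2) \Rightarrow (3)$, I would argue contrapositively. If $(3)$ fails then by the equivalence just established so does $(4)$, producing a $T$-fixed point $p = (t^{\sigma_1}, \ldots, t^{\sigma_n}) \in \Gr^{\overline{\vlam}}_\mu$ with some $\sigma_k$ non-extremal. \'Etale-locally, $\Gr^{\overline{\vlam}}$ near $p$ decomposes as a product of the germs of the $\Gr^{\overline{\lambda_k}}$ at $t^{\sigma_k}$; by $G[t]$-equivariance the factor with non-extremal $\sigma_k$ is the germ at the contracting fixed point of the positive-dimensional transverse slice $\Gr^{\overline{\lambda_k}}_{\nu_k}$, where $\nu_k$ is the dominant Weyl conjugate of $\sigma_k$. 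Iterating Example \ref{eg:Kleinian} along a chain of minimal degenerations shows that this slice is already singular at its contracting fixed point. It remains to verify that this singularity is inherited by $\Gr^{\overline{\vlam}}_\mu$ rather than being smoothed away by the intersection with $\bar m^{-1}(\Gr_\mu)$; the most direct approach is to compute the $T$-weights on the Zariski tangent space at $p$ and compare them to the expected dimension $2\langle \rho, \lambda - \mu\rangle$, showing that the embedding dimension strictly exceeds the dimension of the variety. This last step --- propagating the singularity from the ambient convolution to the symplectic slice --- is the main obstacle of the plan.
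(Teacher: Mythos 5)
Your steps for $(1)\Leftrightarrow(2)$, $(3)\Rightarrow(2)$, and $(3)\Leftrightarrow(4)$ track the paper's proof closely: the paper likewise invokes Namikawa's result \cite[5.6]{NaP} together with Theorem \ref{th:terminalization} for $(1)\Rightarrow(2)$, and proves $(iv)\Rightarrow(iii)$ by flowing a hypothetical point of $\Gr^{\overline{\vlam}}_\mu\smallsetminus\Gr^{\vlam}_\mu$ to a $T$-fixed limit under the combined loop-rotation-plus-$\rho^\vee$ contraction, exactly as you describe.

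The genuine gap is the one you flagged yourself in $(2)\Rightarrow(3)$, and it is worth noting that you were on the verge of making the argument more complicated than it needs to be. You do not need to select the $T$-fixed point $p$, decompose $\Gr^{\overline{\vlam}}$ into local factors indexed by $k$, invoke Example \ref{eg:Kleinian} along chains of minimal degenerations, or compare $T$-weights on the Zariski tangent space against $2\langle\rho,\lambda-\mu\rangle$. The paper's proof works directly with an arbitrary point $x\in\Gr^{\overline{\vlam}}_\mu\smallsetminus\Gr^{\vlam}_\mu$: since the smooth locus of each $\overline{\Gr^{\lambda_k}}$ is exactly $\Gr^{\lambda_k}$, the smooth locus of $\Gr^{\overline{\vlam}}$ is exactly $\Gr^{\vlam}$, so $\Gr^{\overline{\vlam}}$ is singular at $x$. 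Now the key fact you were missing is the \emph{transversality of the $\Gm$- and $G[t]$-orbits}: this says precisely that the completion of $\Gr^{\overline{\vlam}}$ at $x$ factors as the completion of $\Gr^{\overline{\vlam}}_\mu$ at $x$ times a smooth factor. A product is smooth only if each factor is, so singularity of the ambient convolution at $x$ forces singularity of the slice at $x$. This single sentence replaces your entire last step; the propagation of the singularity down to the slice is an immediate consequence of the slice being a transverse slice, no tangent-weight bookkeeping required.
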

\begin{proof}
\noindent (i) $\Rightarrow$ (ii): If $ \Grlmbar$ has a symplectic
resolution, then by \cite[5.6]{NaP} any $\mathbb{Q}$-factorial
terminalization of $ \Grlmbar$, in particular
$\Gr^{\overline{\vlam}}_\mu$, is smooth.

\noindent (ii) $\Rightarrow$ (i): In this case,
$\Gr^{\overline{\vlam}}_\mu$ is an example of  a symplectic resolution of singularities.

\noindent (ii) $\Rightarrow$ (iii):
It is well-known that the smooth locus of $ \overline{\Gr^\lambda} $ is precisely $ \Gr^\lambda $.  Thus the smooth locus of $ \Gr^{\overline{\vlam}} $ is precisely $ \Gr^{\vlam} $.

Now assume that there is a point $x$ in $\Gr^{\overline{\vlam}}_\mu $ not in $\Gr^{\vlam}_\mu$;  we know that $\Gr^{\overline{\vlam}}$ is not smooth at $x$.  By the transversality of the $ G_1[[t^{-1}]] $ and $ G[t] $ orbits, the completion of  $\Gr^{\overline{\vlam}}$  at $x$ is the same as the completion of $\Gr^{\overline{\vlam}}_\mu $ at $x$ times something smooth.  Therefore $ \Gr^{\overline{\vlam}}_\mu$ cannot be smooth at $x$ either.

\noindent (iii) $\Rightarrow$ (ii): clear.

\noindent (iii) $\Rightarrow$ (iv):
If there exist $ \nu_1, \dots, \nu_n $ as in (iii), then $ (t^{\nu_1}, t^{\nu_1 + \nu_2}, \dots, t^{\mu}) \in \Gr^{\overline{\vlam}}_\mu \smallsetminus \Gr^{\vlam}_\mu$.

\noindent (iv) $\Rightarrow$ (iii): Suppose that there exists
$$ (L_1, \dots, L_n) \in \Gr^{\overline{\vlam}}_\mu \smallsetminus \Gr^{\vlam}_\mu $$
Recall that we have a $ \C^\times \times T $ action on $ \Gr $ where the first factor acts by loop rotation.  Consider a map $ \C^\times \rightarrow \C^\times \times T $ which is the identity into the first factor and a generic dominant coweight into the second factor.  We get a resulting $ \C^\times $ action on $ \Gr $ whose attracting sets are the $ I_- $ orbits, where $ I_- $ is the preimage of $ B $ under $ G[[t^{-1}]] \rightarrow G $.

Let $ (t^{\mu_1}, \dots, t^{\mu_n}) = \lim_{s \rightarrow 0} s \cdot (L_1, \dots, L_n) $.  From the definition of $\Gr^{\overline{\vlam}}_\mu$, we see that $ \mu_n = \mu $.  Also, we see that for each $k $, $ d(t^{\mu_{k-1}}, t^{\mu_k}) \le \lambda_k $ (where $ d : \Gr \times \Gr \rightarrow X_+ $ denotes the $ G((t^{-1})) $-invariant distance function on $\Gr $) and so $ \nu_k := \mu_k - \mu_{k-1} $ is a weight of $ V(\lambda_k)$.  Thus we obtain $ \nu_1, \dots, \nu_n $ with $ \nu_1 + \cdots + \nu_n = \mu $.  Moreover, since $ (L_1, \dots, L_n) \notin \Gr^{\vlam}_\mu $, for some $ k $, $ d(L_{k-1}, L_k) < \lambda_k $ and so $ \nu_k $ is a non-extremal weight of $ V(\lambda_k) $.
\end{proof}

If  $ \lambda $ is a sum of minuscule coweights, then the above conditions hold.  For any simple
$G $ not of type A, there are non-miniscule fundamental
coweights $ \lambda $; for such $ \lambda $, we can choose $ \mu $ such that the above conditions do not hold.  So there exist $ \Grlmbar $ which do not admit symplectic resolutions.

\subsection{Beilinson-Drinfeld Grassmannian}
Using the Beilinson-Drinfeld Grassmannian, we can define a family of
Poisson varieties over $\mathbb{A}^n $ whose special fibre is $
\Grlmbar $.  In this work, this family will only be used as motivation
for a similar family of quantizations of $ \Grlmbar $; as illustrated
in works such as \cite{BK04a, BPW, Lo}, the universal symplectic deformation of a
symplectic singularity as a symplectic variety is intimately tied to
understanding its quantizations (see section \ref{se:universality}). From this perspective, a
natural next step (beyond the scope of this paper) would be to study quantizations of the total spaces
of these deformations, not just of a single fibre.

Recall that we have the moduli interpretation of the affine Grassmannian
\begin{multline*}
\Gr = \{ (E, \phi) : E \text{ is a principal $G$-bundle on $ \pp $ and}\\\text{ $\phi: E |_{\pp \smallsetminus \{0 \}} \rightarrow E^0|_{\pp \smallsetminus \{0\}}  $ is an isomorphism} \}
\end{multline*}
where $ E^0 $ denotes the trivial $ G $-bundle.  We say that $ (E, \phi) $ has \textbf{Hecke type} $ \lambda $ at $0 $ if $ (E, \phi) $ gives a point in $ \Gr^\lambda $ under the above identification.

Note that the action of $ G[[t^{-1}]] $ by left multiplication in the homogeneous space definition becomes change of trivialization in the new definition.  Thus the $ G[[t^{-1}]] $ orbit of $ (E, \phi) $ is determined by isomorphism class of the $ G $-bundle $E $, which is given by a dominant coweight.  Note also that the action of $\Gm $ corresponds to changes of trivialization which do not change anything at $ \infty$.

Let $ \mu $ be a dominant weight and let $ P $ be the corresponding standard parabolic subgroup (so that $ W_P $ is the stabilizer of $ \mu $ in the Weyl group).  Let $ E $ be a principal $ G$-bundle of type $ \mu $.  Then $ E $ has a canonical $ P $-structure.

Now let $ (E, \phi) \in \Gr $.  Let $ \mu $ be the isomorphism type of $ E $.  Then $ \phi_\infty $ carries the parabolic structure at $ \infty $ to a parabolic subgroup of $ G $ of type $ \mu $.  Hence we see that the $ \Gm $ orbits on $ \Gr$  are labelled by a pair consisting of a dominant weight $\mu $ and a parabolic subgroup of $ G $ of type $ \mu $.   In particular $ \Gr_\mu $ is the locus of those $ (E, \phi) $ where $ E $ has isomorphism type $ \mu $ and the parabolic subgroup produced is the standard one.

We now will consider the Beilinson-Drinfeld deformation of the affine Grassmannian.  This is a family $ \Gr_{\mathbb{A}^n} $ over $ \mathbb{A}^n $ whose fibre at $ a_1, \dots, a_n \in \mathbb{A}^n $ is given as follows:
\begin{equation*}
\begin{aligned}
\Gr_{a_1, \dots, a_n} = \{ (E, \phi) :&E \text{ is a principal $G$-bundle on $\pp$ and} \\
&\phi: E|_{\pp \smallsetminus \{a_1, \dots, a_n \} } \rightarrow E^0|_{\pp \smallsetminus \{a_1, \dots, a_n\}} \text{ is an isomorphism } \}
\end{aligned}
\end{equation*}

Let $\Gr_{\mu, \mathbb{A}^n} $ be the locus of $ (E, \phi) $ where $ E $ has isomorphism type $ \mu $ and the parabolic subgroup at $ \infty$ is the standard one.

Specializing to one choice of parameters, we can consider changes of trivialization acting on $ \Gr_{a_1, \dots, a_n} $.  Let $ G_1(\pp \smallsetminus \{a_1, \dots, a_n \}) $ denote the kernel of $ G(\pp \smallsetminus \{a_1, \dots, a_n \}) \rightarrow G $ given by evaluation at $ \infty $.  Then, $ \Gr_{\mu, (a_1, \dots, a_n)} $ is an orbit of $ G_1(\pp\smallsetminus \{a_1, \dots, a_n \} )$.

We may also think of this locus in terms of the $ \kb^\times $ action.  We have an action of $ \kb^\times $ on $ \Gr_{\mathbb{A}^n} $ coming from the action of $ \kb^\times $ on $ \pp$.  Note that this action moves the base $ \mathbb{A}^n$.  On the central fibre $ \Gr_{(0, \dots, 0)} = \Gr $ this action of $ \kb^\times $ restricts to the loop rotation action on $ \Gr $.  Hence the fixed points of this $ \kb^\times $ action are the same as the fixed points of the loop rotation action, namely the sets $ G t^\mu $ inside the affine Grassmannian.
Moreover, we have that $ \Gr_{\mu,\mathbb{A}^n} $ is the attracting set for $ t^{w_0 \mu} $ under
the $ \kb^\times $ action.

We have a fiberwise Poisson structure on $ \Gr_{\mathbb{A}^n}$ using the Manin triples described in Etingof-Kazhdan \cite{EK}, Corollary 2.10 and Proposition 2.12.  As in Section \ref{se:SymplecticStructure}, we get a Poisson structure on $\Gr_{\mu, (a_1, \dots, a_n)}$.

Now, let us choose an expression $ \lambda = \lambda_1 + \cdots + \lambda_n $, where $ \lambda_1, \dots, \lambda_n $ are fundamental coweights.  This gives us an $ X_+ $ colored divisor $ D $ on $ \pp $ defined by $ D = \sum \lambda_i a_i $.  We will think of $ D $ as a function $ \pp \rightarrow X_+$.  Now we define
\begin{equation*}
\Gr_{\mu, (a_1, \dots, a_n)}^{\lambda_1, \dots, \lambda_n} \\:= \{ (E, \phi) \in \Gr_{\mu, (a_1, \dots, a_n)} : (E,\phi) \text{ has Hecke type } D(x)\text{ for all } x\in \pp \}
\end{equation*}
 From the above analysis, it is possible to show that these are symplectic leaves in $ \Gr_{\mu, (a_1, \dots, a_n)}$.

 Fixing $ (\lambda_1, \dots, \lambda_n) $ and letting $ (a_1, \dots, a_n) $ vary, this forms a family of $ \mathbb{A}^n $.  The central fibre of this family is $ \Gr_\mu^{\lambda} $.

 Now, define
 \begin{equation*}
 \Gr_{\mu, (a_1, \dots, a_n)}^{\overline{\lambda_1, \dots, \lambda_n}}\\ := \{ (E, \phi) \in \Gr_{\mu, (a_1, \dots, a_n)} : (E,\phi) \text{ has Hecke type } \le D(x) \text{ } \text{ for all } x\in \pp \}
 \end{equation*}
 Then we obtain a flat family of symplectic varieties over $ \mathbb{A}^n $ whose central fibre is $ \Grlmbar $.

 \subsection{Direct system on slices and Zastava spaces}
 We will now look at what happens to $ \Grlmbar $ when we increase $ \lambda, \mu $, keeping $ \lambda - \mu $ fixed.

 Let us fix $ \nu $ in the positive coroot cone.  Let $ \mu, \mu' $ be dominant coweights with $ \mu' - \mu $ dominant.  From Lemma \ref{le:LieStab}, the stabilizer of $ t^{w_0\mu'} $ in $\Gm $ contains the stabilizer of $ t^{w_0\mu} $ in $\Gm $.  So we can define a map $ \Gr_\mu \rightarrow \Gr_{\mu'}$ by $ gt^{w_0\mu} \mapsto gt^{w_0 \mu'} $.  From Proposition \ref{pr:SetTheoryGrlam}, we see that this  restricts to a map $ \Gr_\mu^{\overline{\mu + \nu}} \rightarrow \Gr_{\mu'}^{\overline{\mu' + \nu}}$.  By construction, it is a Poisson map.

 Clearly these maps are compatible with composition.  Thus with $ \nu $ fixed we get a direct system of slices $ \left\{ \Gr_\mu^{\overline{\mu + \nu}} \right\}_\mu $. The limit of this system is an ind-scheme, but in general it will not be represented by a scheme.

 On the other hand, we can consider the Zastava space $ Z_\nu $, an affine variety, as defined in \cite{FM}.  It is a compactification of the moduli space $ Z_\nu^{\circ} $ of based maps from $ \mathbb{P}^1$ into $ G/B $ of degree $ \nu $.    The variety $ Z_\nu $ carries an action of $ \kb^\times $, extending the action of $ \kb^\times $ on $Z_\nu^{\circ} $ which rotates the source of the map.

 The following result is Theorem 2.8 from Braverman-Finkelberg \cite{BF}.  It shows that the algebras of functions $ \O(\Gr_\mu^{\overline{\mu + \nu}}) $ stabilize to $ \O(Z_\nu)$.
 \begin{theorem} \label{th:maptoZastava}
 There exists a map $ \Gr_\mu^{\overline{\mu + \nu}} \rightarrow Z_\nu $.   These maps are compatible with the above direct system on the slices and with the actions of $ \mathbb{C}^\times $.  Moreover, the induced maps $ \O(Z_\nu)_N \rightarrow \O(\Gr_\mu^{\overline{\mu+\nu}})_N $ are isomorphisms if $ N \le \langle \alpha_i, \mu \rangle$ for all $ i $.
 \end{theorem}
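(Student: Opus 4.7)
The plan is first to construct the map using the moduli interpretation of both spaces, then verify the two compatibilities essentially from the construction, and finally attack the low-degree isomorphism, which is the substance of the theorem.

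\emph{Step 1: Construction.} A point of $\Gr_\mu^{\overline{\mu+\nu}}$ is a pair $(E,\phi)$ with $E$ a $G$-bundle on $\pp$ of global isomorphism type $\mu$, $\phi$ a trivialization away from $0$, and Hecke type bounded by $\mu+\nu$ at $0$. Fix a Borel $B$ inside the standard parabolic of type $\mu$; this canonically refines the parabolic reduction of $E$ at $\infty$ to a $B$-reduction. For each fundamental weight $\omega_i$ the $B$-reduction at $\infty$ picks out a highest-weight line in $E_{V(\omega_i)}|_\infty$, which via $\phi$ extends to a rational section $s_i$ of $E_{V(\omega_i)}$ whose zero divisor lies in $\mathbb{A}^1$ and has degree $\langle \omega_i,\nu\rangle$. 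The tuple $(s_i)$ satisfies the Plücker relations, so it defines a based quasi-map $\pp\to G/B$ of degree $\nu$, i.e.\ a point of $Z_\nu$.

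\emph{Step 2: Compatibilities.} The direct-system map $\Gr_\mu\to\Gr_{\mu'}$ sends $gt^{w_0\mu}\mapsto g t^{w_0\mu'}$ and changes only the global isomorphism type of the bundle, leaving the modification data at $0$ and hence the Plücker sections on $\mathbb{A}^1$ unchanged; the two maps to $Z_\nu$ therefore coincide. The $\C^\times$-compatibility is immediate from the Beilinson-Drinfeld realization, in which loop rotation on the central fibre is the restriction of the action rotating $\pp$ while fixing $\infty$, and the latter is by construction the $\C^\times$-action on $Z_\nu$.

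\emph{Step 3: Low-degree isomorphism.} Both $\O(Z_\nu)$ and $\O(\Gr_\mu^{\overline{\mu+\nu}})$ are $\Z_{\ge 0}$-graded by the $\C^\times$-weight, with zeroth graded piece $\C$, so it suffices to check the isomorphism on generators in each weight $\le N$. A convenient framework is a Gauss-type decomposition $g=n h \in \Gm$ with $n\in N_1[[t^{-1}]]$ and $h$ in the complementary factor; then $\O(Z_\nu)$ is generated by the matrix coefficients of $n$, while $\O(\Gr_\mu^{\overline{\mu+\nu}})$ is generated by the minors $\Delta^{(s)}_{\beta,v}$ restricted from Proposition~\ref{pr:SetTheoryGrlam}, and pullback under our map identifies the two sets of generators. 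The crux is that any additional structure distinguishing the two rings, namely extra generators or relations carried by the $h$-factor, first appears in $\C^\times$-weight exceeding $\min_i\langle\alpha_i,\mu\rangle$: on the slice, the diagonal minors $\Delta^{(s)}_{\omega_i,\omega_i}(g t^{w_0\mu})$ are forced to vanish in weights strictly less than $\langle\alpha_i,\mu\rangle$, which pins the $h$-factor to be trivial in those weights. The main obstacle is quantifying this precisely enough to deduce both injectivity and surjectivity of pullback in degrees $\leq N$; I expect to carry it out by a direct analysis of the Gauss decomposition on the slice, showing that the $h$-coordinates are uniquely determined by the $n$-coordinates modulo terms of $\C^\times$-weight strictly greater than $N$ whenever $N\le\langle\alpha_i,\mu\rangle$ for all $i$.
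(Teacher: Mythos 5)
The paper does not prove this statement at all: it is quoted verbatim as Theorem 2.8 of Braverman--Finkelberg, with a citation to \cite{BF} (\arxiv{1111.2266}), and no argument is given in the present text. So there is no ``paper proof'' to compare against, and whatever you produce must stand on its own.

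As it stands, your proposal does not. Steps~1 and~2 are reasonable sketches of the construction of the map and its two compatibilities, and they follow the moduli-of-bundles point of view that is standard here. Step~3, however, is explicitly left unfinished --- you write ``I expect to carry it out by a direct analysis of the Gauss decomposition on the slice'' --- so the core analytic content of the theorem, the isomorphism of the graded pieces $\O(Z_\nu)_N\to\O(\Gr_\mu^{\overline{\mu+\nu}})_N$ for $N\le\langle\alpha_i,\mu\rangle$, has not been established. Beyond the incompleteness, the specific mechanism you invoke is also suspect: you assert that on the slice the diagonal minors $\Delta^{(s)}_{\omega_i,\omega_i}$ ``are forced to vanish in weights strictly less than $\langle\alpha_i,\mu\rangle$.'' But from Proposition~\ref{pr:SetTheoryGrlam} and the discussion around Conjecture~\ref{co:main2}, the conditions cutting out $\Gr_\mu^{\overline{\mu+\nu}}$ kill $\Delta^{(s)}_{\omega_i,\omega_i}$ in \emph{high} degree ($s>m_i=\langle\nu,\om_{i^*}\rangle$), not low degree. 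In Example~\ref{ex:1} the deviation between the two rings is the extra generator $v$ of weight $n+1=\langle\alpha,\mu\rangle+1$, together with the relation $uv+w^{n+2}$ of weight $n+2$; neither is detected by any low-degree vanishing of diagonal minors, and your Gauss-decomposition heuristic (``the $h$-coordinates are uniquely determined by the $n$-coordinates'') does not obviously account for it. To make Step~3 work, you would need a concrete comparison of the graded pieces --- in effect reproving the content of \cite[Thm.~2.8]{BF} --- rather than a plausibility argument; the reasonable route, given the scope of this paper, is simply to cite \cite{BF} as the authors do.
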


 \begin{remark}
 The theorem provides $ Z_\nu $ with a Poisson structure.  On the other hand, $Z_\nu^{\circ} $ carries a symplectic structure as described in \cite{FKMM}.  It is expected that these two structures are compatible.
 \end{remark}

 \begin{example} \label{ex:1}
 Let us take $ G = PGL_2 $ and $ \nu = \alpha^\vee $, the simple coroot.
 Then (as in Example \ref{eg:Kleinian}), for $ n \ge 0$,
 $$
 \Gr_{n \omega^\vee }^{\overline{n\omega^\vee + \alpha^\vee}} \cong \{(u,v,w) : uv + w^{n+2} = 0 \}
 $$
 Moreover, for $ m \ge n $, the map $ \Gr_{n \omega^\vee}^{\overline{n\omega^\vee + \alpha^\vee}} \rightarrow \Gr_{m \omega^\vee}^{\overline{m\omega^\vee + \alpha^\vee}} $ is given by $ (u,v,w) \mapsto (u,vw^{m-n}, w) $.  This is because we have an equality in $ \Gr_{PGL_2} $
 \begin{equation*}
 \left[ \begin{smallmatrix} 1 - wt^{-1} & v t^{-(n+1)} \\ ut^{-1} & 1 + wt^{-1} + \dots + w^{n+1} t^{-(n+1)} \end{smallmatrix} \right]
 \left[\begin{smallmatrix} 1 & 0 \\ 0 & t^m \end{smallmatrix} \right]
 =
 \left[ \begin{smallmatrix} 1 - wt^{-1}  & vw^{m-n} t^{-(m+1)} \\ ut^{-1} & 1 + wt^{-1} + \dots + w^{m+1} t^{-(m+1)} \end{smallmatrix} \right]
 \left[\begin{smallmatrix} 1 & 0 \\ 0 & t^m\end{smallmatrix} \right].
 \end{equation*}
 On the other hand, the Zastava space $ Z_\alpha $ is $ \mathbb{A}^2$.  The map in Theorem \ref{th:maptoZastava} is given by $ (u,v,w) \mapsto (u,w) $.

 With respect to the $ \kb^\times $ action on
 $$
 \Gr_{n \omega^\vee}^{\overline{n\omega^\vee + \alpha^\vee}} = \{(u,v,w) : uv + w^{n+2} =0 \}
 $$
 the variables $ u, w $ have weight 1 and $ v $ has weight $ n+1 $.
 So, we can see that
 $$
 \O(Z_\alpha) = \kb[u,w] \rightarrow \O(\Gr_{n \omega^\vee}^{\overline{n\omega^\vee + \alpha^\vee}}) = \kb[u,v,w]/ (uv+w^{n+2})
 $$
 is an isomorphism in degrees $ 0, \dots, n $ as predicted by Theorem \ref{th:maptoZastava}.

 The Poisson structure on $ \Gr_{n \omega^\vee}^{\overline{n\omega^\vee + \alpha^\vee}} $ is given by
 $$
 \{ w, u \} =  u \quad \{w, v \} = - v \quad \{u, v \} = (n+2)w^{n+1}
 $$
 while the Poisson structure on $ Z_\alpha $ is given by
 $$
 \{ w, u \} = u.
 $$

 Finally, note that the $ \kb$-points of the ind-scheme $ \lim_n \Gr_{n \omega^\vee}^{\overline{n\omega^\vee + \alpha^\vee}} $ are
 $$ \{(u,w) : u \in \kb^{\times}, w \in \kb \} \cup \{(0,0) \} $$
 which is a proper subset of $ \kb^2 $ and hence this ind-scheme is not equal to $ \mathbb{A}^2 $.
 \end{example}

 \subsection{Description of the Poisson structure} \label{sec:Poisson structure}
 We would like to describe the Poisson structure on $ \Gm $ in a little
 more detail.
 Let $C \in \fg \otimes \fg $ be the Casimir element for the bilinear form. Picking dual bases, we may represent this element as $ C = \sum J_a
 \otimes J^a $; this Casimir element allows us to describe the Poisson
 bracket of two minors.  This can be written more compactly using the
 series $ \Delta_{\beta, v}(u)=\sum_{s\geq 0}  \Delta_{\beta, v}^{(s)}u^{-s}$.  Note that $ \Delta^{(0)}_{\beta, v} = \langle \beta, w \rangle $ is a constant function.

 \begin{proposition} \label{th:Poissonminor}
 In $ \O(\Gm)[[u_1^{-1}, u_2^{-1}]]$, the Poisson bracket $\{ \Delta_{\beta_1, v_1}(u_1), \Delta_{\beta_2,v_2}(u_2) \}$ is equal to
 \begin{equation*}
   \frac{1}{u_1-u_2} \sum_a \Delta_{ \beta_1,J_a v_1}(u_1) \Delta_{
     \beta_2,J^a v_2}(u_2) - \Delta_{J_a\beta_1, v_1}(u_1)
   \Delta_{J^a\beta_2, v_2 }(u_2)
 \end{equation*}
 \end{proposition}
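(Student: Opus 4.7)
The plan is to invoke the standard Sklyanin formula attached to the Manin triple $(\fg[t],\gm,\fg((t^{-1})))$ and then evaluate it on matrix coefficients.

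First, I would identify the classical $r$-matrix for this Manin triple with the rational kernel
\[
r(u_1,u_2)=\frac{C}{u_1-u_2}=\sum_{s\geq 0}\frac{u_2^s}{u_1^{s+1}}\,C\;\in\;\fg\otimes\fg[[u_1^{-1},u_2^{-1}]],
\]
where $C=\sum_a J_a\otimes J^a$ and the geometric expansion in $u_2/u_1$ corresponds to the splitting $\fg((t^{-1}))=\fg[t]\oplus\gm$ under the residue pairing on $\fg((t^{-1}))$. The Sklyanin formula then asserts that
\[
\{f_1,f_2\}(g)=\langle r,\nabla^L f_1\otimes\nabla^L f_2\rangle(g)-\langle r,\nabla^R f_1\otimes \nabla^R f_2\rangle(g),
\]
where $\nabla^L,\nabla^R$ are the left- and right-invariant differentials. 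Since $\Gm$ is a Poisson subgroup of $G((t^{-1}))$, this formula restricts to its Poisson structure.

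Second, I would compute the left/right-invariant derivatives of the generating series $\Delta_{\beta,v}(u)$ along an element $X\in\fg((t^{-1}))$. Since $\Delta_{\beta,v}(u)$ is obtained by writing the finite-dimensional matrix coefficient with the loop variable $t$ replaced by the spectral parameter $u$, an infinitesimal left translation $g\mapsto g\exp(\epsilon X)$ produces $\Delta_{\beta,X(u)v}(u)$, where $X(u)$ denotes the Laurent series $X$ viewed in $\fg\otimes\C((u^{-1}))$ acting on $V$ by extension of scalars. Similarly, an infinitesimal right translation produces $-\Delta_{X(u)\beta,v}(u)$, with the sign arising from the standard convention $(X\beta)(v)=-\beta(Xv)$ for the action on $V^\ast$.

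Third, I would substitute these expressions into the Sklyanin formula. When one pairs the first tensor factor of $r(u_1,u_2)$ against $\nabla\Delta_{\beta_1,v_1}(u_1)$, the kernel $1/(u_1-u_2)\cdot\sum_a J_a\otimes J^a$ replaces each $J_a$ by its action on $v_1$ (resp.\ $\beta_1$), yielding $\Delta_{\beta_1,J_a v_1}(u_1)$ (resp.\ $-\Delta_{J_a\beta_1,v_1}(u_1)$), and similarly for the second tensor factor. The two minus signs in the right-invariant term multiply to a plus, but the Sklyanin formula subtracts that term, producing exactly the minus sign between the two summands of the claimed formula. The main obstacle I anticipate is a bookkeeping one: verifying that the geometric expansion of $1/(u_1-u_2)$ in $u_2/u_1$ coincides with the matrix of the residue pairing between $\fg[t]$ and $\gm$ in natural bases, so that the abstract Sklyanin expression genuinely recovers the kernel $1/(u_1-u_2)$ in $\O(\Gm)[[u_1^{-1},u_2^{-1}]]$ rather than only its leading mode. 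This amounts to unraveling the expansion of the rational $r$-matrix but is otherwise mechanical.
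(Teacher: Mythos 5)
Your proposal is correct and follows essentially the same strategy as the paper's proof: identify the classical $r$-matrix for the Manin triple as the rational kernel $\tfrac{C}{u_1-u_2}$, use that $\Gm$ is a Poisson subgroup so the bracket is computed by the Sklyanin bivector $r^L-r^R$ on $G((t^{-1}))$, evaluate the left- and right-invariant differentials of the generalized minors, and conclude via the invariance of the pairing between dual representations. The ``bookkeeping'' point you flag about expanding $1/(u_1-u_2)$ against the residue pairing is precisely the content of identifying the cobracket as coboundary with this $r$, which the paper also asserts without detailed verification.
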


 \begin{proof}
 The cobracket $\fg((t^{-1}))\to \fg((u_1))\otimes \fg((u_2))$ is coboundary.  If we let $r(u_1,u_2)=\frac{C}{u_1-u_2}$ it is given by \[a(t)\mapsto \left[a(u_1)\otimes 1+1\otimes a(u_2),r(u_1,u_2)\right].\]

 As described earlier, the Lie algebra $\fg((t^{-1}))$ carries an inner
 product $(f,g)_t=- \operatorname{res}_{t=0} (f,g)$ for which $\gm$ is
 Lagrangian and complementary to $\gp$; this realizes $\fg((t^{-1}))$ as the
 (topological) Drinfeld double of $\gm$.  In particular, $\Gm\subset
 G((t^{-1}))$ is a Poisson subgroup, and the Poisson bracket of any two
 functions on $\Gm$ can be calculated taking the bracket of any two
 extensions to all of $G((t^{-1}))$ and then restricting to $\Gm$.

 Thus, the Poisson structure on $G((t^{-1}))$ is defined by
 $r^L(u_1,u_2)-r^R(u_1,u_2)$, the difference of the left translation and right
 translation of the element $r(u_1,u_2)$ considered as a bivector at the
 identity.
 If $ X \in \gm $, and $ g \in \Gm$, we identify $ X $ with a tangent vector at $ g $ by left translation.  Then we have $$ (d \Delta_{\beta, v})_g(X) = \langle \beta, gXv \rangle. $$

 Hence
 \begin{multline*}
 \{ \Delta_{\beta_1,v_1}(u_1), \Delta_{\beta_2, v_2}(u_2) \}(g) = (d \Delta_{\beta_1, v_1})_g \otimes (d \Delta_{\beta_2, v_2})_g (g) \\
 = \frac{1}{u_1-u_2} \Big( \sum_a \langle \beta_1, g(u_1) J_a v_1 \rangle \langle \beta_2, g(u_2) J^a v_2 \rangle
 - \sum_a \langle \beta_1, J_a g(u_1)v_1 \rangle \langle \beta_2, J^a g(u_2) v_2\rangle \Big)
 \end{multline*}
 and then the proposition follows from the invariant of the pairing between dual representations.
 \end{proof}

 We can unpack Proposition \ref{th:Poissonminor} into the following equations:

 \begin{equation}\label{eq:minor-bracket}
 \{ \Delta_{ \beta_1, v_1}^{(r+1)}, \Delta_{\beta_2, v_2}^{(s)} \} - \{
 \Delta_{\beta_1, v_1}^{(r)}, \Delta_{ \beta_2, v_2}^{(s+1)} \} = \sum
 \Delta_{J_a\beta_1,  v_1}^{(r)} \Delta_{J^a\beta_2,  v_2}^{(s)} -
 \Delta_{ \beta_1, J_a v_1}^{(r)} \Delta_{\beta_2, J^a v_2}^{(s)}
 \end{equation}
 for $ r, s \ge 0$.  These equations specify all the desired Poisson brackets.

  \subsection{A conjectural description of the ideal of \texorpdfstring{$\Grlmbar$}{Gr}}
 In this section, we describe a conjectural description of the ideal of $ \Gr^{\overline{\lambda}}_\mu $ as a subvariety of $ \Gr_0 = \Gm$.  Let $ G_{sc} $ denote the simply connected cover of $ G $.  Note that the natural map $ G_{sc}[[t^{-1}]]_1 \rightarrow G[[t^{-1}]]_1 $ is an isomorphism.  This allows us to consider $ \Delta_{\om_i, \om_i}^{(s)} $ as functions on $\Gm $, even if $ \om_i $ are not weights of $ G $ (for example if $ G $ is of adjoint type).

We begin with the case of $ \mu = 0 $.  Let $J^\lambda_0 $ denote the ideal in $ \O(\Gm) $ Poisson generated by $ \Delta_{\om_i, \om_i}^{(s)} $ for $ s > \langle \lambda, \om_{i^*} \rangle$ and for $ i \in I $.
 \begin{conjecture} \label{co:MainConj}
  The ideal of $ \Gr^{\overline{\lambda}}_0 $ in $\O(\Gm) $ is $J^\lambda_0$.
 \end{conjecture}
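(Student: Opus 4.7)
My plan is the natural one for identifying a Poisson ideal with the defining ideal of a Poisson subvariety of $\Gm$, and breaks into three steps: (a) verify that the proposed generators lie in the defining ideal of $\Gr^{\overline{\la}}_0$; (b) show that, after Poisson-closing, they cut out $\Gr^{\overline{\la}}_0$ set-theoretically; and (c) show that $J^\la_0$ is already radical.  Since $\Gr^{\overline{\la}}_0$ is a Poisson subvariety of $\Gm$, its defining ideal is automatically Poisson, so (a) and (b) would give equality up to radicals, and (c) then completes the proof.

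For step (a), the content is that $\Delta_{\om_i, \om_i}^{(s)}$ vanishes on $\Gr^{\overline{\la}}_0$ for every $s > \langle \la, \om_{i^*}\rangle$.  The minor $\Delta_{\om_i, \om_i}(g) = \langle v_{-\om_i}, g v_{\om_i}\rangle$ picks off a specific weight component of the $V(\om_i)$-valued series $g v_{\om_i}$; when $g \in \Gm$ represents a point of $\overline{\Gr^\la}$, the Hecke type of $g$ is bounded by $\la$, and an upper-bound companion to Proposition~\ref{pr:SetTheoryGrlam} (using that the lowest weight of $V(\om_i)$ is $-\om_{i^*}$) controls the negative $t$-degree of $g v_{\om_i}$, forcing the required vanishing.

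Step (b) exploits the bracket relation \eqref{eq:minor-bracket} supplied by Proposition~\ref{th:Poissonminor}: Poisson bracketing $\Delta_{\om_i, \om_i}^{(s)} \in J^\la_0$ against any other minor inserts the Casimir $\sum_a J_a \otimes J^a$, replacing $v_{\om_i}$ by $J_a v_{\om_i}$ (and similarly $v_{-\om_i}$ by $J_a v_{-\om_i}$) inside the minors.  Since the extremal vectors are cyclic in $V(\om_i)$ and $V(\om_i)^*$ under $U(\fg)$, iterating these brackets produces, modulo products of other minors, every $\Delta_{\beta, v}^{(r)}$ with $\beta \in V(\om_i)^*$, $v \in V(\om_i)$, and $r > \langle \la, \om_{i^*}\rangle$.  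Combined with the remark after Proposition~\ref{pr:SetTheoryGrlam} that $\tau$ may be restricted to fundamental weights, this yields $V(J^\la_0) = \Gr^{\overline{\la}}_0$ as sets.

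Step (c), radicality of $J^\la_0$, is the genuine obstacle and is exactly what we are unable to establish in general.  One natural attack is to compare Hilbert series under the loop-rotation $\C^\times$-action, which contracts $\Gr^{\overline{\la}}_0$ to the unique fixed point $1$: by geometric Satake the graded dimension of $\O(\Gr^{\overline{\la}}_0)$ can be extracted from weight-space multiplicities in $V(\la)$, and one would hope to produce an explicit spanning set of $\O(\Gm)/J^\la_0$ of matching size, using \eqref{eq:minor-bracket} as a system of normal-form reductions.  A complementary approach is to transport the problem to the shifted Yangians constructed in the sequel: a PBW-style flatness statement for the family $Y^\la_0(\mathbf{c})$ would descend to reducedness of the classical limit.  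Both routes seem to require genuinely new input, which is why the statement remains conjectural here.
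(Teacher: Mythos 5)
This statement is a conjecture in the paper, not a theorem, and the paper does not prove it; you have correctly recognized this and your analysis mirrors the paper's own treatment. Your steps (a) and (b) correspond precisely to what the paper establishes in Proposition~\ref{pr:J0gen} (that $J^\la_0$ is generated as an ordinary ideal by the $\Delta_{\beta,v}^{(s)}$, obtained by iterating the bracket relation \eqref{eq:minor-bracket} against $\Delta^{(1)}_{\om_j,s_j\om_j}$ and $\Delta^{(1)}_{s_j\om_j,\om_j}$ to move through weight bases of $V(\om_i)$ and $V(\om_i)^*$) and Corollary~\ref{co:J0vanish} (that the vanishing locus of $J^\la_0$ is $\Gr^{\overline{\la}}_0$, via Proposition~\ref{pr:SetTheoryGrlam}); the paper then states verbatim that ``it only remains to show that $I^\la_0$ is radical,'' which is exactly your step (c). One small point of precision on your step (b): the bracket computation does not merely give the $\Delta_{\beta,v}^{(s)}$ ``modulo products of other minors'' — after the normalization trick of dividing by $\Delta_{\om_i,\om_i}$ implicit in the choice of bracket partners, Proposition~\ref{pr:J0gen} shows they lie in $J^\la_0$ on the nose, so $J^\la_0$ is actually an ordinary ideal with these explicit generators. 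Your proposed attacks on radicality (Hilbert series via geometric Satake, or flatness of $Y^\la_0(\mathbf{c})$) are reasonable and consistent with the paper's remark that the conjecture would recover Weyman's theorem on ideals of nilpotent orbit closures in type $A$, which is why the authors expect it to be genuinely hard.
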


 Let us make some comments on this conjecture.  First, we have the following result.

 \begin{proposition} \label{pr:J0gen}
  $J^\lambda_0$ is generated as an ordinary ideal by $ \Delta_{\beta, v}^{(s)} $ for $ s > \langle \lambda, \om_{i^*} \rangle$ and for $ i \in I $ and where $ \beta, v $ range over bases for $ V(\om_i)^* $ and $ V(\om_i)$.
 \end{proposition}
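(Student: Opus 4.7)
My plan is to prove the equality of ideals by double containment, writing $I$ for the ordinary ideal of the proposition's statement.

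For $J^\lambda_0\subseteq I$: I will show that $I$ itself is a Poisson ideal, which suffices because $I$ visibly contains the generators $\Delta^{(s)}_{\om_i,\om_i}$ of $J^\lambda_0$. Since $\Delta^{(0)}_{\beta_1,v_1}$ is constant on $\Gm$, iterating the recursion~(\ref{eq:minor-bracket}) telescopes
$$\{\Delta^{(r)}_{\beta_1,v_1},\,\Delta^{(s)}_{\beta,v}\}=\sum_{k=0}^{r-1}\sum_a\bigl(\Delta^{(k)}_{J_a\beta_1,v_1}\Delta^{(r+s-1-k)}_{J^a\beta,v}-\Delta^{(k)}_{\beta_1,J_av_1}\Delta^{(r+s-1-k)}_{\beta,J^av}\bigr).$$
When $\Delta^{(s)}_{\beta,v}$ is a generator of $I$, the second factor in every summand is a matrix coefficient of $V(\om_i)$ of degree at least $s>\langle\la,\om_{i^*}\rangle$ (the Casimir components $J_a, J^a$ act within each representation), so expanding in the chosen basis exhibits it as a linear combination of generators of $I$. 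Since $\O(\Gm)$ is generated as an algebra by matrix coefficients of finite-dimensional representations, this Leibniz-style check is enough.

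For $I\subseteq J^\lambda_0$: specializing the same computation to $r=1$ yields
$$\{\Delta^{(1)}_{\beta_1,v_1},\Delta^{(s)}_{\beta_2,v_2}\}=-\Delta^{(s)}_{\phi\beta_2,v_2}-\Delta^{(s)}_{\beta_2,\phi v_2},\qquad \phi=\phi(\beta_1,v_1):=\sum_a\langle\beta_1,J_av_1\rangle J^a\in\fg.$$
Taking $\beta_1,v_1\in V(\om_i)$, the map $(\beta_1,v_1)\mapsto\phi$ surjects onto the simple factor of $\fg$ acting faithfully on $V(\om_i)$, so by linearity brackets of this form realize (up to sign) the diagonal action of an arbitrary element of that factor on the index pair $(\beta_2,v_2)\in V(\om_i)^*\otimes V(\om_i)$, always landing in $J^\lambda_0$. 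I then iteratively propagate from $\Delta^{(s)}_{v^*_{\om_i},v_{\om_i}}$: since $v^*_{\om_i}$ is the lowest weight vector of $V(\om_i)^*$, taking $\phi=f_j$ kills the first slot and yields $\Delta^{(s)}_{v^*_{\om_i},f_jv_{\om_i}}\in J^\lambda_0$, and iterating over words in the $f_j$'s fills out $\Delta^{(s)}_{v^*_{\om_i},v}$ for every $v\in U(\mathfrak{n}_-)v_{\om_i}=V(\om_i)$. Then applying $\phi=e_k$ to any such element and subtracting the already-known $\Delta^{(s)}_{v^*_{\om_i},e_kv}$ isolates $\Delta^{(s)}_{e_kv^*_{\om_i},v}\in J^\lambda_0$; induction on PBW monomial length acting on $v^*_{\om_i}$ then produces $\Delta^{(s)}_{\beta,v}\in J^\lambda_0$ for all basis vectors $\beta\in V(\om_i)^*$.

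The main subtlety, and the step that demands the most care, is this final inductive isolation. The diagonal action always couples the two slots of the index pair and so does not a priori generate the full tensor product $V(\om_i)^*\otimes V(\om_i)$ from a single element; the argument works only because $(v^*_{\om_i},v_{\om_i})$ is an extremal ``corner'' pair (lowest weight in $V(\om_i)^*$, highest in $V(\om_i)$), which allows us to separate the contributions of the two slots one generation at a time and propagate through all of $U(\fg)v^*_{\om_i}\otimes U(\mathfrak{n}_-)v_{\om_i}$.
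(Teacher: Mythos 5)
Your proof is correct and follows essentially the same strategy as the paper's: both halves proceed by the identical double containment, with the inclusion $I\subseteq J^\lambda_0$ established by iterating the Poisson bracket formula \eqref{eq:minor-bracket} at $r=1$, starting from $\Delta^{(s)}_{v_{-\om_i},v_{\om_i}}$ and propagating first through words in the $f_j$'s (filling out the $V(\om_i)$ slot) and then through words in the $e_k$'s (filling out the $V(\om_i)^*$ slot), using that the $\beta$-slot vanishes under $f_j$ at the lowest-weight corner. Your formulation via the element $\phi=\sum_a\langle\beta_1,J_av_1\rangle J^a$ and the telescoped form of \eqref{eq:minor-bracket} cleanly packages what the paper does with the explicit choices $\Delta^{(1)}_{\om_j,s_j\om_j}$ and $\Delta^{(1)}_{s_j\om_j,\om_j}$, but it is the same argument.
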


  \begin{proof}
 Let $I$ be the ideal  generated as an ordinary ideal by
 $ \Delta_{\beta, v}^{(s)} $ for $ s > \langle \lambda, \om_{i^*}
 \rangle$.  First, we show that this ideal is contained in $J^\lambda_0
 $.

 First, we claim that $ \Delta^{(s)}_{\om_i, v} \in J^\lambda_0 $ for all $ v \in V(\om_i) $ and $ s > \langle \la, \om_{i^*} \rangle $.  We proceed by downward induction on the weight of $ v $.  The base case of  $ v $ is highest weight follows by definition.  For the inductive step, suppose that $v$ is not highest weight
 weight.  In this case, $v=\sum f_j v_j$ for some $v_j$ of higher weight than $ v $.

 Fix $ s $ with $ s > \langle \la, \om_{i^*} \rangle $.  Using \eqref{eq:minor-bracket} and the
 expression for the Casimir (for notation see Section \ref{PBW Yangian})  \begin{equation*}
 C = C_\fh + \sum_{\alpha \in \Phi_+} C_\alpha e_\alpha \otimes f_\alpha + C_\alpha f_\alpha \otimes e_\alpha,
 \end{equation*} where $(e_\alpha, f_\alpha) = C_\alpha^{-1}$, we see that
 \[
 \{\Delta^{(s)}_{\om_i,v_j},\Delta^{(1)}_{\om_j,s_j\om_j}\}=-\Delta^{(s)}_{\beta,f_j v_j}\]
 Thus we see that
 $$\Delta^{(s)}_{\om_i, v} = \sum_j \Delta^{(s)}_{\om_i, f_jv_j} = - \sum_j \{ \Delta^{(s)}_{\om_i, v_j}, \Delta^{(1)}_{\om_j, s_j \om_j} \}.
 $$
 All the terms on the right hand side lie in $J^\lambda_0$ by the inductive assumption,
 and thus $ \Delta^{(s)}_{\om_i, v} \in J^\lambda_0 $.

 Now we claim that $  \Delta^{(s)}_{\beta, v} \in J^\lambda_0 $ for all $ \beta \in V(\om_i)^*, v \in V(\om_i)$ and $ s > \langle \la, \om_{i^*} \rangle $.

 We have already proven this claim when $ \beta = v_{-\om_i} $, so we proceed by induction on the weight of $ \beta$.  Suppose that $ \beta \in V(\om_i)^* $ is not lowest weight and assume that the claim holds for all $ \beta $ of lower weight.   In this case, we can write $ \beta = \sum e_j \beta_j $ for some $ \beta_j $ of lower weight.

 Fix $ s $ with $ s > \langle \la, \om_{i^*} \rangle $.  Again using the above expression for the Casimir we find that
 \[
 \{\Delta^{(s)}_{\beta_j,v},\Delta^{(1)}_{s_j\om_j,\om_j}\}=\Delta^{(s)}_{\beta_j,e_j v}-\Delta^{(s)}_{e_j\beta_j,v},\]

 Thus we see that
 $$\Delta^{(s)}_{\beta, v} = \sum_j \Delta^{(s)}_{e_j \beta_j,v} = \sum_j \{ \Delta^{(s)}_{\beta_j, v}, \Delta^{(1)}_{s_j \om_j, \om_j} \} - \Delta^{(s)}_{\beta_j,e_j v}.
 $$
 All the terms on the right hand side lie in $J^\lambda_0$ by the inductive assumption,
 and thus $ \Delta^{(s)}_{\beta, v} \in J^\lambda_0 $.
 This shows that $I\subset J^0_\la$.

 It remains to show that $ I $ is a Poisson ideal.  Since $ \Delta^{(s)}_{\beta, v} $, for $ \beta \in V(\om_i)^*, v \in V(\om_i), i \in I$, generates $ \O(G_1[[t^{-1}]]) $, it suffices to check that $ I $ is closed under Poisson bracket with these elements.  This follows immediately from \eqref{eq:minor-bracket}.
  \end{proof}

 Combining this proposition with Proposition \ref{pr:SetTheoryGrlam}, we obtain the following.

 \begin{corollary} \label{co:J0vanish}
 The vanishing set of $J^\lambda_0 $ is $ \Gr^{\overline{\lambda}}_0 $.
 \end{corollary}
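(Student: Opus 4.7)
The plan is to deduce this corollary directly from the two preceding propositions, with essentially no additional input required.

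The first step is to invoke Proposition \ref{pr:J0gen}, which identifies $J^\lambda_0$ with the \emph{ordinary} ideal (not merely the Poisson ideal) generated by the matrix coefficients $\Delta_{\beta,v}^{(s)}$ as $i$ runs over $I$, as $\beta$ and $v$ run over bases of $V(\om_i)^*$ and $V(\om_i)$, and as $s$ ranges over $s > \langle \lambda, \om_{i^*}\rangle$. Consequently the vanishing set of $J^\lambda_0$ in $\Gm$ is cut out precisely by this explicit family of functions, and the content of the corollary reduces to identifying this common zero locus with $\overline{\Gr^\lambda} \cap \Gm$.

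The second step is to apply Proposition \ref{pr:SetTheoryGrlam}, together with the remark following its proof that one may restrict $\tau$ to any set of dominant weights spanning the weight lattice over $\mathbb{Q}$, and in particular to the fundamental weights $\{\om_i\}_{i\in I}$. This gives a set-theoretic description of $\overline{\Gr^\lambda}$ inside $G((t^{-1}))$ as the common zero locus of generalized minors of exactly the type appearing in the first step. Restricting to $g \in \Gm = G_1[[t^{-1}]]$, the expansion of $\Delta_{\beta,v}(g)$ is a power series in $t^{-1}$ with constant term $\langle \beta, v\rangle$, so the portion of Proposition \ref{pr:SetTheoryGrlam}'s conditions that is not already automatic on $\Gm$ matches exactly the generating family produced by Proposition \ref{pr:J0gen}.

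I do not foresee a serious obstacle; the argument is essentially a formal combination of the two cited results. The one point requiring a bit of care is the bookkeeping needed to align the index ranges of $s$ in the two propositions once one passes to the first congruence subgroup, which follows directly from the fact that $\Delta_{\beta,v}(g)$ is supported only in non-negative $s$ for $g \in \Gm$. Granting this, the vanishing set of $J^\lambda_0$ equals $\overline{\Gr^\lambda} \cap \Gm = \Gr^{\overline{\lambda}}_0$, as claimed.
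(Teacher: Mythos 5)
Your proof is correct and takes essentially the same approach as the paper, whose own "proof" of this corollary is a single sentence: combine Proposition \ref{pr:J0gen} (ordinary generators of $J^\lambda_0$) with Proposition \ref{pr:SetTheoryGrlam} (set-theoretic cut-out of $\overline{\Gr^\lambda}$ via generalized minors, using only fundamental weights by the remark following its proof). You correctly identify both inputs and the one point requiring care (matching the index ranges of $s$ on $\Gm$); no additional ideas are needed.
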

 Thus in order to establish Conjecture \ref{co:MainConj}, it only remains to show that $I^\lambda_0$ is radical.

 \begin{remark}
  Let $ G = SL_n $.  By an observation which goes back to Lusztig, we know that there is an isomorphism $ \Gr^{\overline{n\omega_1}}_0 \cong \mathcal{N} $, the nilpotent cone of $ \mathfrak{sl}_n $.  For any dominant coweight $ \lambda $ with $ \lambda \le n \omega_1$, under this isomorphism $ \Gr^{\overline{\lambda}}_0 $ is taken to a nilpotent orbit closure.
 Thus, the above conjecture implies generators for the ideal of a
 nilpotent orbit closure inside the nilpotent cone of $
 \mathfrak{sl}_n $.  From this perspective, one can see that
 Conjecture \ref{co:MainConj} would imply the main result of Weyman
 \cite{W}, which gives generators for the ideals of nilpotent orbit closures.  This gives
 additional evidence toward the conjecture, but also suggests it will be difficult to prove.
  \end{remark}

 \begin{remark}
  One could imagine a similar conjecture for the ideal of $ \Gr^{\bar \lambda} $ inside of the homogeneous coordinate ring of $ \Gr $.  However, this conjecture is false, already for $ SL_2 $ and $ \lambda = \alpha $.
 \end{remark}

 We will need the following generalization of Conjecture \ref{co:MainConj} which describes the ideal of $ \Grlmbar$.  Consider the subgroup $ \Gm_\mu $ defined as the stabilizer in $ \Gm $ of $ t^{w_0 \mu} $.  Note that Lemma \ref{le:LieStab}, $\Gm_\mu \subset N_1[[t^{-1}]]$.

 By the orbit-stabilizer theorem, we see that $\Gr_\mu =  \Gm / \Gm_\mu $ and so $ \O(\Gr_\mu) = \O(\Gm)^{\Gm_\mu} $.  Moreover the map $ \Gm \rightarrow \Gr_\mu $ is Poisson and thus $ \O(\Gr_\mu) $ is a Poisson subalgebra of $ \O(\Gm) $.

 \begin{lemma} \label{le:GrmuMinors}
 The subalgebra $ \O(\Gr_\mu) $ contains
 \begin{gather*}
 \Delta^{(s)}_{s_i\om_i, \om_i}, \text{ for all } i \in I, s > 0, \quad \Delta^{(s)}_{\om_i, \om_i}, \text{ for all } i \in I, s > 0, \\
 (\Delta_{\om_i, s_i\om_i}/\Delta_{\om_i, \om_i})^{(s)}, \text{ for all } i \in I, s > \langle \mu^*, \alpha_i \rangle
 \end{gather*}
 \end{lemma}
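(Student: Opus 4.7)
The plan is to verify, for each of the three families, that the listed function lies in the invariant subalgebra $\O(\Gr_\mu) = \O(\Gm)^{\Gm_\mu}$, where $\Gm_\mu$ acts on $\Gm$ by right multiplication. Because $\Gm_\mu$ is a closed subgroup of the pro-unipotent group $\Gm = G_1[[t^{-1}]]$, it is itself pro-unipotent, so invariance under $\Gm_\mu$ is equivalent to annihilation by $Lie(\Gm_\mu)$ acting as right-invariant vector fields. Intersecting the description in Lemma~\ref{le:LieStab} with $Lie(\Gm) = t^{-1}\fg[[t^{-1}]]$ yields
\[
Lie(\Gm_\mu) \;=\; \bigoplus_{\beta \in \Delta_+} \mathrm{span}\bigl\{\, t^{-k}e_\beta \,:\, 1 \le k \le -\langle \beta, w_0\mu\rangle\,\bigr\},
\]
confirming in particular that $\Gm_\mu \subset N_1[[t^{-1}]]$.

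For $X = t^{-k}e_\beta \in Lie(\Gm_\mu)$, the identity $\Delta_{\gamma,v}(g\exp(\epsilon X)) = \langle \gamma, g\exp(\epsilon X)v\rangle$ together with the fact that $t^{-k}$ commutes with $g$ as a scalar gives
\[
X \cdot \Delta_{\gamma,v} \;=\; t^{-k}\,\Delta_{\gamma,\,e_\beta v}, \qquad \text{so} \qquad X\cdot \Delta^{(s)}_{\gamma,v} \;=\; \Delta^{(s-k)}_{\gamma,\,e_\beta v}.
\]
For the first two families, $v = v_{\om_i}$ is a highest-weight vector, so $e_\beta v_{\om_i} = 0$ for every $\beta \in \Delta_+$. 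Hence $X \cdot \Delta^{(s)}_{\om_i,\om_i} = X\cdot \Delta^{(s)}_{s_i\om_i,\om_i} = 0$ for every $X \in Lie(\Gm_\mu)$, establishing their membership in $\O(\Gr_\mu)$.

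For the third family, set $\phi := \Delta_{\om_i, s_i\om_i}/\Delta_{\om_i,\om_i}$, which makes sense in $\O(\Gm)[[t^{-1}]]$ because $\Delta_{\om_i,\om_i}(g) \in 1 + t^{-1}\O(\Gm)[[t^{-1}]]$. Since $X\cdot \Delta_{\om_i,\om_i}=0$, the quotient rule reduces the question to computing $X\cdot\Delta_{\om_i,s_i\om_i}$. The vector $e_\beta\overline{s_i}v_{\om_i}$ has weight $\om_i - \alpha_i + \beta$, which is a weight of $V(\om_i)$ only when $\alpha_i - \beta$ is a non-negative integer combination of simple roots; for $\beta \in \Delta_+$ this forces $\beta = \alpha_i$. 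A direct $\mathfrak{sl}_2$ calculation using $\overline{s_i} = \exp(f_i')\exp(-e_i')\exp(f_i')$ gives $\overline{s_i}v_{\om_i} = f_i'v_{\om_i}$ and hence $e_i\overline{s_i}v_{\om_i} = -d_i^{1/2} v_{\om_i}$. Putting it together,
\[
X\cdot \phi^{(s)} \;=\; -d_i^{1/2}\,\delta_{\beta,\alpha_i}\,\delta_{s,k}.
\]
Since the admissible values of $k$ satisfy $1 \le k \le -\langle \alpha_i, w_0\mu\rangle = \langle \mu^*,\alpha_i\rangle$, this vanishes for every $X \in Lie(\Gm_\mu)$ whenever $s > \langle \mu^*,\alpha_i\rangle$, as required.

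The main obstacle is the combined weight and $\mathfrak{sl}_2$ analysis that isolates the single contribution $\beta = \alpha_i$ to $X\cdot\phi$ and produces the constant $-d_i^{1/2}$; everything else reduces to the formal infinitesimal computation on matrix-coefficient series together with the pro-unipotent exponential identification $Lie(\Gm_\mu)\cong \Gm_\mu$.
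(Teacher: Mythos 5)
Your proof is correct, and it takes a genuinely different route from the paper's. The paper works entirely at the group level: since $\Gm_\mu\subset N_1[[t^{-1}]]$, the minors $\Delta_{\om_i,\om_i}$ and $\Delta_{s_i\om_i,\om_i}$ are manifestly right-$\Gm_\mu$-invariant; for the third family the paper observes that for $k\in\Gm_\mu$ one has the exact group-level identity $k\,\overline{s_i}v_{\om_i}=\overline{s_i}v_{\om_i}+\Delta_{\om_i,s_i\om_i}(k)\,v_{\om_i}$ (no intermediate terms, as $\om_i-\alpha_i$ is the unique weight just below $\om_i$), whence
\[
\frac{\Delta_{\om_i,s_i\om_i}(gk)}{\Delta_{\om_i,\om_i}(gk)}=\frac{\Delta_{\om_i,s_i\om_i}(g)}{\Delta_{\om_i,\om_i}(g)}+\Delta_{\om_i,s_i\om_i}(k),
\]
and $\val\,\Delta_{\om_i,s_i\om_i}(k)\ge\langle w_0\mu,\alpha_i\rangle$ by Lemma~\ref{le:LieStab}. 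You instead linearize: pro-unipotence reduces right $\Gm_\mu$-invariance to annihilation by $Lie(\Gm_\mu)$, the first two families drop out by highest-weight vanishing, and a weight count isolates $\beta=\alpha_i$ as the only contribution for the third, with the explicit $\mathfrak{sl}_2$ computation supplying the constant. Both arguments ultimately hinge on the same two facts --- $\Gm_\mu\subset N_1[[t^{-1}]]$ and the valuation bound from Lemma~\ref{le:LieStab} --- so the approaches are close in spirit; yours trades the paper's clean additive cocycle identity for an infinitesimal calculation plus a weight-space argument, which some readers may find more mechanical and others more transparent. One small terminological nit: the vector fields $X\cdot f = \frac{d}{d\epsilon}\big|_{0} f(g\exp(\epsilon X))$ generate right translations and hence are the \emph{left}-invariant vector fields; the computation is unaffected.
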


 Later we will see that these elements  generate $ \O(\Gr_\mu) $ as a Poisson algebra.
 \begin{proof}
 Note that the action of $ \Gm_\mu $ on $ \O(\Gm) $ is given by $ (k \cdot f)(g) = g(fk) $ for $ k \in \Gm_\mu$, $ f \in \O(\Gm) $ and $ g \in \Gm $.  In particular, we see that $ k \cdot \Delta_{\beta, v} = \Delta_{\beta, kv} $.

 Since $ \Gm_\mu \subset N_1[[t^{-1}]]$, the minors $ \Delta_{\om_i, \om_i} $ and $ \Delta_{s_i \om_i, \om_i} $ will be $ \Gm_\mu$-invariant.  Hence all $\Delta^{(s)}_{s_i\om_i, \om_i}, \Delta^{(s)}_{\om_i, \om_i} $ all lie in $ \Gr_\mu $.

 On the other hand, let us consider the coefficients of the $\Delta_{\om_i, s_i \om_i} $ minor.  If $k\in \Gm_\mu$, then we have $k\cdot v_{s_i \om_i}=v_{s_i \om_i}+\Delta_{\om_i,s_i \om_i}(k)v_{\om_i}$.  Hence if $ g \in \Gm $, then
 \begin{equation*}
 \frac{\Delta_{\om_i,s_i\om_i}(gk)}{\Delta_{\om_i,\om_i}(gk)}
 =\frac{\Delta_{\om_i,s_i\om_i}(g)+\Delta_{\om_i,\om_i}(g)\Delta_{\om_i,s_i \om_i}(k)}{\Delta_{\om_i,\om_i}(g)}
 =\frac{\Delta_{\om_i,s_i\om_i}(g)}{\Delta_{\om_i,\om_i}(g)}+\Delta_{\om_i,s_i \om_i}(k)
 \end{equation*}
 By Lemma \ref{le:LieStab}, we have $\val \Delta_{\om_i,s_i\om_i}(k)\ge \langle w_0\mu, \alpha_i \rangle $.  Hence the coefficient of $t^{-s} $ in  $ \Delta_{\om_i, s_i \om_i}/\Delta_{\om_i, \om_i} $ is invariant under the action of $ \Gm_\mu$ for $ s > \langle \mu^*, \alpha_i \rangle $.  Thus $ (\Delta_{\om_i, s_i\om_i}/\Delta_{\om_i, \om_i})^{(s)} \in \O(\Gr_\mu) $ for $ s > \langle \mu^*, \alpha_i \rangle$.
 \end{proof}

 Let $ J_\mu^\lambda $ denote the ideal of $\O(\Gr_\mu) $  Poisson generated by $ \Delta_{\omega_i, \omega_i}^{(s)} $ for $ i \in I $ and $ s > \langle \lambda - \mu, \om_{i^*} \rangle =m_i$.
 \begin{conjecture} \label{co:main2}
 The ideal of $ \Grlmbar $ in $\O(\Gr_\mu) $ is $ J_\mu^\lambda $.
 \end{conjecture}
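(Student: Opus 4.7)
My plan is to mimic the proof strategy of Proposition \ref{pr:J0gen} and Corollary \ref{co:J0vanish} and extend from $\mu = 0$ to arbitrary dominant $\mu$. The attack splits into three natural steps: the containment $J_\mu^\lambda \subseteq I(\Grlmbar)$, the set-theoretic equality of the vanishing locus of $J_\mu^\lambda$ with $\Grlmbar$, and the radicality of $J_\mu^\lambda$.

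For the first step, since $\Grlmbar$ is a Poisson subvariety of $\Gr_\mu$, its ideal is automatically a Poisson ideal, so it suffices to verify that each generator $\Delta^{(s)}_{\omega_i, \omega_i}$ (for $s > m_i$) vanishes on $\Grlmbar$. Given a point $[ht^{w_0\mu}] \in \Grlmbar$, the scaling identity $\Delta_{\omega_i, \omega_i}(ht^{w_0\mu}) = t^{-\langle \mu, \omega_{i^*}\rangle} \Delta_{\omega_i, \omega_i}(h)$ translates between the coefficients of $\Delta$ on the $G((t^{-1}))$-side and on $\Gm$. Applying Proposition \ref{pr:SetTheoryGrlam} to appropriate choices of $\tau, \beta, v$ (in particular using dual pairings so that the lower-$s$ vanishing on $G((t^{-1}))$ converts, after the shift by $\langle\mu,\omega_{i^*}\rangle$, into the desired upper-$s$ vanishing as an element of $\O(\Gr_\mu)$) yields the containment.

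For the set-theoretic equality, I would generalize the weight induction in Proposition \ref{pr:J0gen}. The modified generators $(\Delta_{\omega_j, s_j\omega_j}/\Delta_{\omega_j, \omega_j})^{(r)} \in \O(\Gr_\mu)$ from Lemma \ref{le:GrmuMinors} play the role of $\Delta^{(1)}_{\omega_j, s_j\omega_j} \in \O(\Gm)$ in the $\mu = 0$ setting. Computing their Poisson brackets with $\Delta^{(s)}_{\omega_i, v}$ via a version of \eqref{eq:minor-bracket} (adapted to track the contribution of the denominator, which itself lies in a Poisson-invariant subalgebra generated by in-range minors) should produce, up to terms already in $J_\mu^\lambda$, new elements $\Delta^{(s)}_{\omega_i, f_j v}$; iterating on the weight of $v$ and dually on $\beta$ shows that $J_\mu^\lambda$ contains all $\Delta^{(s)}_{\beta, v}$ for $\beta \in V(\omega_i)^*$, $v \in V(\omega_i)$, $s > m_i$. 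Combined with Proposition \ref{pr:SetTheoryGrlam} (again after the index shift), this yields the set-theoretic equality, the precise analog of Corollary \ref{co:J0vanish}.

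The main obstacle is the third step, the radicality of $J_\mu^\lambda$, which is already unresolved at $\mu = 0$ (Conjecture \ref{co:MainConj}). In type $A$ with $\mu=0$ it amounts, via Lusztig's observation and the \Mirkovic--Vybornov isomorphism, to Weyman's description of the defining ideals of nilpotent orbit closures, suggesting that a purely commutative-algebraic attack will be delicate. The most promising alternative is a quantum-to-classical argument: if one can independently establish that the quotient $Y^\lambda_\mu$ of the shifted Yangian constructed later in the paper is flat with associated graded of the expected Hilbert series, then radicality of $J_\mu^\lambda$ would follow by a Hilbert-series comparison. At present, however, this flatness is essentially equivalent to the conjecture itself.
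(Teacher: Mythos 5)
The statement you are trying to prove is stated in the paper as Conjecture~\ref{co:main2}, and the paper does \emph{not} prove it --- it remains open. What the paper does prove is the weaker set-theoretic statement, namely Proposition~\ref{pr:GenJmulam} (the vanishing locus of $J_\mu^\lambda$ is $\Grlmbar$), together with Theorem~\ref{Ylm-quant2} showing what \emph{would} follow if the conjecture held. You correctly recognize that the remaining gap is radicality of $J_\mu^\lambda$, and you correctly flag that this is already open when $\mu=0$ (Conjecture~\ref{co:MainConj}) and is at least as hard as Weyman's theorem on nilpotent orbit closures. Your assessment of the difficulty is honest and essentially matches the paper's own stance.

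However, your proposed route for the intermediate set-theoretic step differs from the paper's, and the difference matters. You suggest extending the weight-induction argument of Proposition~\ref{pr:J0gen} to produce ordinary generators of $J_\mu^\lambda$, claiming that $J_\mu^\lambda$ would contain ``all $\Delta^{(s)}_{\beta,v}$ for $\beta\in V(\om_i)^*$, $v\in V(\om_i)$, $s>m_i$''. But these functions do not in general lie in $\O(\Gr_\mu)$; only the special minors of Lemma~\ref{le:GrmuMinors} do, and the subalgebra $\O(\Gr_\mu)\subset\O(\Gm)$ is not closed under the moves that the induction in Proposition~\ref{pr:J0gen} uses. The paper makes this explicit just before Proposition~\ref{pr:GenJmulam}: ``When $\mu\ne 0$, we do not have a set of (ordinary) generators for $J_\mu^\lambda$ as in Proposition~\ref{pr:J0gen}.'' The paper therefore takes an entirely different route to the set-theoretic statement: since $J_\mu^\lambda$ is a Poisson ideal, its vanishing locus is a union of symplectic leaves $\Gr^\nu_\mu$, and the containment $V(J_\mu^\lambda)\subseteq\Grlmbar$ is established by an explicit Iwahori double-coset and Bruhat-order argument showing that for each $\nu\nleq\la$ there is a point of $\Gr^\nu_\mu$ on which some $\Delta^{(d)}_{\om_i,\om_i}$ with $d>m_i$ does not vanish. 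If you want to replicate Proposition~\ref{pr:GenJmulam}, you should adopt that leaves-plus-Bruhat argument rather than try to manufacture ordinary generators inside $\O(\Gr_\mu)$. For the radicality itself, no proof is known, and your closing suggestion --- deduce it from flatness of $Y_\mu^\la$ --- is, as you note, circular as things currently stand.
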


This conjecture generalizes Conjecture \ref{co:MainConj}.  When $ \mu
\ne 0 $, we do not have a set of (ordinary) generators for $
J_\mu^\lambda $ as in Proposition \ref{pr:J0gen}.  However, we will
now establish an analog of Corollary \ref{co:J0vanish}.

 \begin{proposition}\label{pr:GenJmulam}
   The vanishing locus of $ J_\mu^\lambda $ is $ \Grlmbar $.
 \end{proposition}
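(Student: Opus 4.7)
The strategy is to prove the two inclusions $\Grlmbar \subseteq V(J_\mu^\lambda)$ and $V(J_\mu^\lambda) \subseteq \Grlmbar$ separately, following the pattern of Corollary \ref{co:J0vanish} but adapted to the more restrictive ambient algebra $\O(\Gr_\mu)$.

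For the easy inclusion $\Grlmbar \subseteq V(J_\mu^\lambda)$, I would exploit the fact that $\Gr^\lambda_\mu$ is a symplectic leaf of $\Gr_\mu$, so $\Grlmbar$ is a Poisson subvariety and its defining ideal is a Poisson ideal. It therefore suffices to verify that the Poisson generators $\Delta^{(s)}_{\omega_i,\omega_i}$ with $s>m_i$ vanish on $\Grlmbar$. Writing a point as $g t^{w_0\mu}$ with $g\in\Gm$ and using that $v_{\omega_i}$ is a weight vector, one gets the clean factorization $\Delta_{\omega_i,\omega_i}(g t^{w_0\mu}) = t^{-\langle\mu,\omega_{i^*}\rangle}\Delta_{\omega_i,\omega_i}(g)$, so that the range of $s'$ in which $\Delta^{(s')}_{\omega_i,\omega_i}(g t^{w_0\mu})$ can be nonzero is controlled by Proposition \ref{pr:SetTheoryGrlam}; rewriting $s'=s+\langle\mu,\omega_{i^*}\rangle$ delivers the required vanishing $\Delta^{(s)}_{\omega_i,\omega_i}(g)=0$ for $s>m_i$.

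For the harder inclusion $V(J_\mu^\lambda)\subseteq\Grlmbar$, the plan is to mimic Proposition \ref{pr:J0gen}. Given $p\in V(J_\mu^\lambda)$, the goal is to verify the vanishing conditions of Proposition \ref{pr:SetTheoryGrlam} that certify $p\in\overline{\Gr^\lambda}\cap\Gr_\mu$. Starting from the Poisson generators $\Delta^{(s)}_{\omega_i,\omega_i}\in J_\mu^\lambda$, I would iterate Poisson brackets with elements of $\O(\Gr_\mu)$ to produce all the minors $\Delta^{(s)}_{\beta,v}$ for weight vectors $\beta,v$ in $V(\omega_i)^*\times V(\omega_i)$ needed by Proposition \ref{pr:SetTheoryGrlam}; the key bracket identities come from \eqref{eq:minor-bracket}, which allows one to move from highest and lowest weight vectors to arbitrary weight vectors by the repeated action of the $f_j$'s and $e_j$'s.

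The main obstacle is that, unlike in the case $\mu=0$, this iteration must stay inside the Poisson subalgebra $\O(\Gr_\mu)\subset\O(\Gm)$. For $\mu=0$ one could freely bracket with $\Delta^{(1)}_{\omega_j,s_j\omega_j}$, but for general $\mu$ this minor is not $\Gm_\mu$-invariant, so only the functions identified in Lemma \ref{le:GrmuMinors} are available; in particular, the ratios $(\Delta_{\omega_j,s_j\omega_j}/\Delta_{\omega_j,\omega_j})^{(s)}$ (valid only for $s>\langle\mu^*,\alpha_j\rangle$) will have to play the role of the missing minors. Bracketing against such ratios produces correction terms involving the denominator and the already-generated $\Delta^{(s)}_{\omega_j,\omega_j}$, and keeping careful track of which minors $\Delta^{(s)}_{\beta,v}$ can be produced, and in what range of $s$, is the delicate combinatorial heart of the argument.
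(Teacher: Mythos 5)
Your easy inclusion $\Grlmbar \subseteq V(J_\mu^\lambda)$ is fine and matches the paper's observation that the generalized minors vanish on $\Grlmbar$ by Proposition~\ref{pr:SetTheoryGrlam}. The problem is the hard inclusion, and here you have correctly identified the obstacle but have not resolved it --- and in fact the paper explicitly warns that it \emph{cannot} be resolved along these lines. Immediately after stating Conjecture~\ref{co:main2}, the authors remark: ``When $\mu \neq 0$, we do not have a set of (ordinary) generators for $J_\mu^\lambda$ as in Proposition~\ref{pr:J0gen}.'' Your plan is precisely to replicate Proposition~\ref{pr:J0gen}'s bracket-lowering induction inside $\O(\Gr_\mu)$, and you yourself note that the only available surrogates for $\Delta^{(1)}_{\omega_j, s_j\omega_j}$ are the truncated ratios from Lemma~\ref{le:GrmuMinors}, which live only in degrees $s>\langle\mu^*,\alpha_j\rangle$. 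This restriction on allowable degrees is exactly what breaks the induction: for a general weight vector one needs brackets in \emph{low} degree to step down the weight ladder, and those brackets are no longer inside $\O(\Gr_\mu)$. Calling this ``the delicate combinatorial heart of the argument'' names the gap without closing it.

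The paper's actual proof sidesteps this entirely by a dual perspective: rather than enlarging the ideal (showing it contains enough minors), it shrinks the vanishing locus. Since $J_\mu^\lambda$ is a Poisson ideal, $V(J_\mu^\lambda)$ is a union of symplectic leaves $\Gr^\nu_\mu$; one then shows directly that for any $\nu\nleq\lambda$ (with $\mu\le\nu$) the leaf $\Gr^\nu_\mu$ is \emph{not} contained in $V(J_\mu^\lambda)$, by exhibiting an explicit point of $\Gr^\nu_\mu$ on which some generator $\Delta^{(d)}_{\omega_i,\omega_i}$ (with $d = \langle\nu-\mu,\omega_{i^*}\rangle > m_i$) is nonzero. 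That point is produced by a nonemptiness claim for an intersection of Iwahori-type orbits, which reduces to a comparison $w_0 t^{w_0\nu} \ge w_0 t^{w_0\mu}$ in the Bruhat order on the extended affine Weyl group. So the argument you would need is not a careful bookkeeping of which minors can be bracket-generated, but a reduction to a single nonvanishing evaluation checked via the geometry of the affine flag variety. Without that pivot, your proposal does not yield the hard inclusion.
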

 \begin{proof}
   The vanishing locus of  $ J_\mu^\lambda $ is the union of the
   symplectic leaves  in the vanishing locus of $ \Delta_{\omega_i,
     \omega_i}^{(s)} $ for $ i \in I $ and $ s > \langle \lambda - \mu,
   \om_{i^*} \rangle =m_i$; after all, the vanishing set is a union of
   symplectic leaves and if these functions vanish on a
   symplectic leaf, so do all Poisson brackets with them.

 These generalized minors vanish on $\Grlmbar $ by Proposition
 \ref{pr:SetTheoryGrlam}.  So it suffices to prove the vanishing locus of our generators does not contain $ \Gr^\nu_\mu $ for some $ \nu \nleq \la $.

   Fix $ \nu \nleq \la $ such that $ \mu \le \nu $. Then for some $i$,
   then $d=\langle \nu - \mu, \om_{i^*} \rangle > \langle \lambda -
   \mu, \om_{i^*} \rangle$.  We will prove that there exists a point
   in $\Gr^\nu_\mu $ on which $ \Delta_{\omega_i,
     \omega_i}^{(d)} $ is non-zero.

   Let $I_+^+ = I \subset G((t^{-1}))$ denote the standard Iwahori and let
   $ I_-^+ = w_0 I_+^+ w_0^{-1} $ be the preimage of $ B_- $ in
   $G[t]$.  We claim that it suffices to prove that
   \begin{equation} \label{eq:nonempty} I_-^+ t^{w_0 \nu} I_+^+ \cap
     G_-[[t^{-1}]] t^{w_0 \mu} \ne \emptyset \text{ in } G((t^{-1}))
   \end{equation}
 To see that (\ref{eq:nonempty}) suffices, let $ g \in G_1[[t^{-1}]] $ such that $ gt^{w_0\mu} $ lies in the above intersection.  As, $I_-^+, I_+^+ \subset G[t] $, we see that $ gt^{w_0 \mu} \in \Gr^\nu_\mu $.  Finally, we can write $ g = b_- t^{w_0\nu} b_+ t^{-w_0 \mu} $ for $ b_- \in I_-^+, b_+\in I_+^+ $ and an elementary computation shows that $ \Delta^{(d)}_{\om_i, \om_i}(b_- t^{w_0\nu} b_+ t^{-w_0 \mu} ) \ne 0$.

To prove (\ref{eq:nonempty}), we work in the affine flag variety $ G((t^{-1}))/I $ and note that (\ref{eq:nonempty}) is equivalent to non-emptiness of the intersection $ I_-^+ t^{w_0 \nu} \cap G_-[[t^{-1}]] t^{w_0 \mu} $ in $ G((t^{-1}))/I $.  Let $ I_+^- $ denote the preimage of $ B $ in $G[[t^{-1}]] $ under evaluation at $ t^{-1} = 0 $.  Since $ \mu $ is dominant, $ B $ fixes $ t^{w_0 \mu} $ and thus $ G_-[[t^{-1}]] t^{w_0 \mu} = I^-_+ t^{w_0 \mu} $.  Thus we reduce to proving that
$$
I_-^+ t^{w_0 \nu} \cap I_+^- t^{w_0 \mu} \ne \emptyset \text{ in } G((t^{-1}))/I.
$$
Twisting by $ w_0 $, we reduce to proving that
$$
I_+^+ w_0 t^{w_0 \nu} \cap I_-^- w_0 t^{w_0 \mu} \ne \emptyset \text{ in } G((t^{-1}))/I.
$$
where $I_-^-$ is the preimage of $ B_- $ in $G[[t^{-1}]] $.  From
general theory of flag varieties, this is equivalent to $ w_0 t^{w_0
  \nu} \ge w_0 t^{w_0 \mu} $ in the Bruhat order on the (extended)
affine Weyl group.  This last fact is easily verified under our
hypothesis that $ \mu, \nu $ are dominant and $ \nu \ge \mu $; after
all $t^\nu\ge t^\mu$, the latter equation is arrived at by right
multiplication by $w_0$, and $t^\nu$ is a minimal double coset
representative for $W$ in the extended affine Weyl group.

\end{proof}

\section{Yangians}
\subsection{The Drinfeld Yangian}
As mentioned in the introduction, we will study subquotients of
Yangians in order to quantize our slices.  We will actually need a
slight variant on the usual Yangian, which will be produced via
a theory developed by Gavarini \cite{G1,G2}.  We begin with the usual Yangian which we call the
``Drinfeld Yangian'' to avoid confusion with the Yangian we wish to consider.

We define the {\bf Drinfeld Yangian} $ U_{\hh} \gp  $ as the associative $\C[[h]]$-algebra with generators $ e_i^{(s)}, h_i^{(s)}, f_i^{(s)} $ for $ i \in I $ and $ r,s \in \mathbb{N} $ and relations
\begin{align*}
[h_i^{(s)}, h_j^{(s)}] &= 0,  \\
[e_i^{(r)}, f_i^{(s)}] &= \delta_{ij} h_i^{(r+s)}, \\
[h_i^{(0)}, e_j^{(s)}] &=  (\alpha_i, \alpha_j) e_j^{(s)},  \\
[h_i^{(r+1)},e_j^{(s)}] - [h_i^{(r)}, e_j^{(s+1)}] &= \frac{\hh (\alpha_i, \alpha_j)}{2} (h_i^{(r)} e_j^{(s)} + e_j^{(s)} h_i^{(r)}) , \\
[h_i^{(0)}, f_j^{(s)}] &= - (\alpha_i, \alpha_j) f_j^{(s)}, \\
[h_i^{(r+1)},f_j^{(s)}] - [h_i^{(r)}, f_j^{(s+1)}] &= -\frac{\hh (\alpha_i, \alpha_j)}{2} (h_i^{(r)} f_j^{(s)} + f_j^{(s)} h_i^{(r)}) , \\
[e_i^{(r+1)}, e_j^{(s)}] - [e_i^{(r)}, e_j^{(s+1)}] &= \frac{\hh (\alpha_i, \alpha_j)}{2} (e_i^{(r)} e_j^{(s)} + e_j^{(s)} e_i^{(r)}), \\
[f_i^{(r+1)}, f_j^{(s)}] - [f_i^{(r)}, f_j^{(s+1)}] &= -\frac{\hh (\alpha_i, \alpha_j)}{2} (f_i^{(r)} f_j^{(s)} + f_j^{(s)} f_i^{(r)}), \\
i \neq j, N = 1 - a_{ij} \Rightarrow
\operatorname{sym} &[e_i^{(r_1)}, [e_i^{(r_2)}, \cdots
[e_i^{(r_N)}, e_j^{(s)}]\cdots]] = 0 \\
i \neq j, N = 1 - a_{ij} \Rightarrow
\operatorname{sym} &[f_i^{(r_1)}, [f_i^{(r_2)}, \cdots
[f_i^{(r_N)}, f_j^{(s)}]\cdots]] = 0
\end{align*}
where $\operatorname{sym}$ denotes symmetrization with respect
to $r_1,\dots,r_N$.

The following result of Drinfeld will be our starting point.

\begin{theorem}
$ U_\hh \gp $ is a quantization of $ \gp $.  More precisely, there is an isomorphism of co-Poisson Hopf algebras $U_\hh \gp / \hh U_\hh \gp  \cong U \gp$, where $ U \gp $ carries the co-Poisson structure coming from the Manin triple $ (\fg[t], t^{-1}\fg[[t^{-1}]], \fg((t^{-1}))) $.
\end{theorem}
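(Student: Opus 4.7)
The plan is to establish three things: (i) the relations of $U_\hh\gp$ collapse at $\hh = 0$ to the standard Chevalley--Serre presentation of $U\gp$ on generators $x \otimes t^s$ for $x \in \{e_i, f_i, h_i\}$; (ii) $U_\hh\gp$ is topologically free over $\C[[\hh]]$, i.e.\ admits a PBW basis; and (iii) the induced co-Poisson cobracket on $U\gp$ matches the one coming from the Manin triple $(\gp, \gm, \fg((t^{-1})))$.

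Part (i) is essentially bookkeeping. Setting $\hh = 0$, the telescoping relation $[h_i^{(r+1)}, e_j^{(s)}] - [h_i^{(r)}, e_j^{(s+1)}] = 0$ combined with $[h_i^{(0)}, e_j^{(s)}] = (\al_i, \al_j) e_j^{(s)}$ yields $[h_i^{(r)}, e_j^{(s)}] = (\al_i, \al_j) e_j^{(r+s)}$ by induction on $r$; the analogous arguments handle the $f$-relations and the $[e,e]$, $[f,f]$ relations, while the Serre relations carry over verbatim. Under the assignment $e_i^{(s)} \mapsto e_i \otimes t^s$, $f_i^{(s)} \mapsto f_i \otimes t^s$, $h_i^{(s)} \mapsto h_i \otimes t^s$, the classical limit of the Yangian presentation is therefore exactly the defining presentation of $U\gp$, producing a surjective Hopf algebra homomorphism $\pi\colon U\gp \twoheadrightarrow U_\hh\gp/\hh U_\hh\gp$.

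The main obstacle is (ii). My plan would be to filter $U_\hh\gp$ by placing each generator $x^{(r)}$ in filtered degree $r+1$ (and $\hh$ in degree $1$); one checks that every Yangian relation respects this filtration and has leading symbol matching the corresponding relation of $U\gp$. The associated graded therefore receives a surjection from a $\C[\hh]$-enhancement of $U\gp$, and classical PBW for $\gp$ forces ordered monomials in $\{h_i^{(r)}, e_\alpha^{(r)}, f_\alpha^{(r)}\}$ (with $e_\al^{(r)}, f_\al^{(r)}$ suitable iterated commutators for non-simple roots $\alpha$) to be $\C[[\hh]]$-linearly independent. Rather than redo the diamond-lemma bookkeeping, I would invoke Drinfeld's original PBW theorem for the Yangian \cite{Dr}. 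Combined with (i), this shows $\pi$ is an isomorphism of Hopf algebras.

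For (iii), work with the generating series $e_i(u) = \sum_{r \ge 0} e_i^{(r)} u^{-r-1}$ and similarly for $f_i, h_i$. Using the standard coproduct formulas on Drinfeld generators, one computes $\delta(x) = \hh^{-1}(\Delta(x) - \Delta^{\mathrm{op}}(x)) \bmod \hh$ directly, finding that the cobracket on $x(u) \in \{e_i(u), f_i(u), h_i(u)\}$ equals $[x(u) \otimes 1 + 1 \otimes x(v),\, C/(u-v)]$, where $C$ is the Casimir of Section \ref{sec:Poisson structure}. This is precisely the coboundary cobracket determined by the rational $r$-matrix attached to the Manin triple $(\gp, \gm, \fg((t^{-1})))$, completing the identification.
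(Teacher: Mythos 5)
The paper does not prove this statement; it is introduced with the words \emph{``The following result of Drinfeld will be our starting point''} and is simply cited to Drinfeld. So there is no in-paper argument to compare against, and the question is only whether your reconstruction is sound. Parts (i) and (ii) are fine as far as they go: (i) is the straightforward check that the $\hh=0$ telescoping relations recover the Chevalley--Serre presentation of $U\gp$, giving a surjection $U\gp\twoheadrightarrow U_\hh\gp/\hh U_\hh\gp$, and (ii) invokes Drinfeld's PBW theorem for injectivity. Since the theorem you are reconstructing is itself Drinfeld's, leaning on his PBW theorem is not circular, but it does mean you have not produced an independent argument; you have reassembled the ingredients from the same source.

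The gap is in (iii). You propose to ``compute $\delta(x)$ directly'' from ``the standard coproduct formulas on Drinfeld generators,'' but there are no such formulas: the coproduct of the Yangian has no known closed form in the Drinfeld (current) presentation. It is explicit only in the $J$-presentation (generators $x$, $J(x)$ for $x\in\fg$, with $\Delta(J(x)) = J(x)\otimes 1 + 1\otimes J(x) + \tfrac{\hh}{2}[x\otimes 1,\, C]$) or implicitly via the RTT presentation. Indeed the paper itself, in the proof of Proposition \ref{DG pairing}, computes $\Delta(H_i^{(2)})$ by passing through the $J$-presentation rather than working on Drinfeld generators. To make your step (iii) rigorous you would need the same detour: identify $e_i^{(1)}$, $f_i^{(1)}$, $h_i^{(1)}$ with suitable $J$-generators modulo lower order, use Drinfeld's coproduct formula there, and then verify that the induced cobracket on $\gp$ is the rational coboundary cobracket $\delta(a) = \bigl[a(u)\otimes 1 + 1\otimes a(v),\, C/(u-v)\bigr]$ associated to the Manin triple. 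As currently written, this step appeals to formulas that do not exist, so (iii) needs to be reworked before the argument is complete.
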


\subsection{PBW basis for the Drinfeld Yangian}\label{PBW Yangian}

Fix any order on the nodes of the Dynkin diagram; for each positive root
$\al$, we let $\check{\al}$ denote the smallest simple root such that
$\hat{\al}=\al-\check{\al}$ is again a positive root.

We define $ e_\alpha \in \fg $ for $ \alpha \in \Delta_+ $ recursively, by
$$
  e_{\alpha_i} = e_i \text{ and } e_\alpha = [e_{\hat{\alpha}}, e_{\check{\alpha}}]
$$

We extend this definition to $ U_\hh \gp $ by defining
$$
e_{\alpha_i}^{(r)} = e_i^{(r)} \text{ and } e_{\alpha}^{(r)} = [e_{\hat{\alpha}}^{(r)}, e_{\check{\al}}^{(0)}].
$$

Similarly, we define $ f_\alpha $ and $ f_\alpha^{(r)} $.
We have the following PBW theorem for the Drinfeld Yangian:
\begin{proposition}\mbox{}
\begin{enumerate}
\item Under the isomorphism $U_\hh \gp / \hh U_\hh \gp  \cong U \gp$, $e_\alpha^{(r)} $ corresponds to $ e_\alpha t^r $.
\item Ordered monomials in the $ e_\alpha^{(r)}, h_i^{(r)}, f_\beta^{(r)} $ form a PBW basis for $ U_\hh \gp $.
\end{enumerate}
\end{proposition}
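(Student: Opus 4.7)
The plan is to prove Part (1) by induction on the height of $\alpha$, and then to reduce Part (2) to the classical PBW theorem for $U\gp$ via a loop-degree filtration on $U_\hh\gp$.

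For Part (1), the base case $\alpha=\alpha_i$ is immediate from the definitional identification $e_{\alpha_i}^{(r)} = e_i^{(r)}$ together with the Drinfeld isomorphism $U_\hh\gp/\hh U_\hh\gp \cong U\gp$, under which $e_i^{(r)} \mapsto e_i t^r$. For the inductive step, writing $\alpha = \hat\alpha + \check\alpha$ with $\hat\alpha$ of smaller height, one computes modulo $\hh$:
\[ e_\alpha^{(r)} = [e_{\hat\alpha}^{(r)}, e_{\check\alpha}^{(0)}] \longmapsto [e_{\hat\alpha} t^r, e_{\check\alpha}] = [e_{\hat\alpha}, e_{\check\alpha}] t^r = e_\alpha t^r \]
using the inductive hypothesis. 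The argument for $f_\alpha^{(r)}$ is identical.

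For Part (2), I would equip $U_\hh\gp$ with the increasing filtration in which each generator $x^{(r)}$ has weight $r$ and $\hh$ is treated as a scalar of weight $0$. Every defining relation is then homogeneous modulo strictly lower-weight terms; for instance in
\[ [h_i^{(r+1)}, e_j^{(s)}] - [h_i^{(r)}, e_j^{(s+1)}] = \tfrac{\hh}{2}(\alpha_i,\alpha_j)\bigl( h_i^{(r)} e_j^{(s)} + e_j^{(s)} h_i^{(r)}\bigr), \]
the left-hand side is of weight $r+s+1$ while the right-hand side is of weight $r+s$, so in the associated graded the relation collapses to $[h_i^{(r+1)}, e_j^{(s)}] = [h_i^{(r)}, e_j^{(s+1)}]$; combined with the base case $[h_i^{(0)}, e_j^{(s)}] = (\alpha_i,\alpha_j) e_j^{(s)}$, this recovers the commutation relation of $\fg[t]$. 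Analogous collapses hold for every Drinfeld and Serre relation, yielding a surjection $U\gp \otimes \C[[\hh]] \twoheadrightarrow \operatorname{gr} U_\hh\gp$. Together with Part (1) and the classical PBW theorem for $U\gp$, a standard filtered-to-graded argument then shows that ordered monomials span $U_\hh\gp$ topologically over $\C[[\hh]]$ and are $\C[[\hh]]$-linearly independent.

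The main obstacle is verifying that the surjection $U\gp \otimes \C[[\hh]] \twoheadrightarrow \operatorname{gr} U_\hh\gp$ is actually an isomorphism, that is, that no extra relations are forced in the associated graded beyond those of the loop algebra. The danger lies in the higher iterates of the Serre relations, whose behaviour under the loop-degree filtration must be controlled carefully. This is precisely the content of Levendorskii's proof of the Yangian PBW theorem (building on Drinfeld's original construction, which simultaneously ensures flatness of $U_\hh\gp$ over $\C[[\hh]]$), and I would invoke that result rather than reprove it from scratch.
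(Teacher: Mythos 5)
The paper states this proposition without proof, treating it as a standard fact about the Drinfeld Yangian, so there is no argument to compare yours against. Your reconstruction is the expected one and is sound: Part~(1) by induction on root height, using that the defining bracket $e_\alpha^{(r)}=[e_{\hat\alpha}^{(r)},e_{\check\alpha}^{(0)}]$ degenerates modulo $\hh$ to the corresponding bracket in $\fg[t]$; Part~(2) via the loop-degree filtration, under which each Drinfeld relation collapses in the associated graded to the corresponding $\fg[t]$ relation, yielding a surjection from $U\gp\otimes\C[[\hh]]$ onto $\operatorname{gr}U_\hh\gp$. You correctly identify the one genuinely nontrivial point — that this surjection is injective, i.e.\ that the higher-order Serre relations impose no further constraints in the associated graded — and your decision to invoke Levendorskii's PBW theorem (together with Drinfeld's flatness of $U_\hh\gp$ over $\C[[\hh]]$) for that step, rather than handwave past it, is exactly right. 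One minor point worth making explicit in a polished writeup: the base case of Part~(1) tacitly uses that the Drinfeld isomorphism $U_\hh\gp/\hh U_\hh\gp\cong U\gp$ is normalized so that $e_i^{(r)}\mapsto e_it^r$, $h_i^{(r)}\mapsto h_it^r$, $f_i^{(r)}\mapsto f_it^r$; this is built into the presentation but should be stated, since the inductive step relies on it.
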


\subsection{Drinfeld-Gavarini duality}
Our goal is to give a quantization of the Poisson-Hopf algebra $ \O(\Gm) $ using the Drinfeld Yangian $ U_\hh \gp $.
For this we will use the quantum groups duality of Drinfeld-Gavarini.

We briefly describe one half of Drinfeld-Gavarini duality \cite{Dr,
  G1,G2}. Let $(H,\Delta,\ep)$ be a Hopf algebra over $\C[[\hh]]$.
Consider maps $\Delta^n:H\rightarrow H^{\otimes n}$ for $n\geq 0$
defined by $\Delta^0=\ep$, $\Delta^1 = \textrm{id}_H$, and $\Delta^n =
(\Delta\otimes \textrm{id}^{\otimes(n-2)})\circ \Delta^{n-1}$ for
$n\geq2$.  Let $\delta^n = (\textrm{id}_H - \ep)^{\otimes n}\circ
\Delta^n$, and define the Hopf subalgebra
$$
        H' = \left\{ a\in H \mid \delta^n(a) \in \hh^n H^{\otimes n} \right\}\!.
$$
In general, $H'/ \hh H'$ is a commutative Hopf algebra over $\C$ and can be given the Poisson bracket
$$
        \{a+\hh H',b+\hh H'\} = \hh^{-1} [a,b] + \hh H'.
$$

Suppose that $G$ is a Poisson affine algebraic group, namely the maximal spectrum of a Poisson commutative Hopf algebra $\O(G)$, and let $\fg, \fg^*$ be its tangent and cotangent Lie bialgebras.  Let $U_\hh = U_\hh(\fg)$ be a quantization of $U(\fg)$.

\begin{theorem}[{\cite[Theorem 2.2]{G1}}] \label{DG Duality}
There is an isomorphism of Poisson-Hopf algebras
\[ {U_\hh}'/\hh {U_\hh}' \cong \O(G^\ast) \]
where $G^\ast$ is a connected algebraic group with tangent Lie bialgebra $\fg^\ast$.
\end{theorem}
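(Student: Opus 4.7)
The plan is to follow Gavarini's strategy and analyze the subalgebra $U_\hh'$ via the coassociator filtration $\delta^n$. First I would verify that $U_\hh'$ is itself a Hopf subalgebra of $U_\hh$. This requires checking that the defining condition $\delta^n(a)\in \hh^n H^{\otimes n}$ is preserved by multiplication, the coproduct $\Delta$, the counit, and the antipode. Multiplicativity follows from the fact that $\delta^n$ is a coalgebra $1$-cocycle, so $\delta^n(ab)$ expands as a sum of terms $\delta^k(a)\cdot \delta^{n-k}(b)$ up to lower order cocycles; compatibility with the coproduct requires combinatorial identities relating $\delta^n\circ \Delta$ to iterates of $\delta^m$. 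These are formal manipulations inside the cosimplicial complex of the Hopf algebra.

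Next I would show $U_\hh'/\hh U_\hh'$ is commutative and inherits a Poisson bracket. The key input is that $U_\hh$ quantizes the cocommutative $U(\fg)$, so $\Delta - \Delta^{\mathrm{op}} \equiv 0 \pmod{\hh}$ on $U_\hh$, hence on $U_\hh'$ the quantity $\delta^2(a) - \tau\delta^2(a)$ is divisible by $\hh^3$ (rather than merely $\hh^2$). An analogous argument for the commutator (using the compatibility of $\Delta$ with multiplication) gives that $[a,b]\in \hh U_\hh'$ whenever $a,b \in U_\hh'$; I would make this precise by expanding $\delta^n([a,b])$ and carefully tracking orders of $\hh$. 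Commutativity modulo $\hh$ then lets us define $\{\bar a,\bar b\} := \hh^{-1}[a,b]+\hh U_\hh'$, and verifying the Jacobi identity and Leibniz rule reduces to identities in $U_\hh$ modulo $\hh^2$.

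To identify $A := U_\hh'/\hh U_\hh'$ with $\O(G^\ast)$, I would first show $A$ is the algebra of regular functions on a connected affine Poisson algebraic group. The coproduct inherited from $\Delta$ and the commutative Poisson-Hopf structure exhibit $A$ as $\O(G^\ast)$ for some pro-algebraic group $G^\ast$ (taking Spec and using that the Poisson-Hopf condition is preserved in the limit). The tangent Lie bialgebra at the identity is then computed explicitly: the cotangent space $\mathfrak{m}/\mathfrak{m}^2$, where $\mathfrak{m}=\ker(\ep)\cap A$, is naturally isomorphic to $\fg$ as a vector space via the inclusion $\hh\fg \subset U_\hh'$ and reduction mod $\hh$; the Poisson bracket on $A$ restricts to a cobracket on $\mathfrak{m}/\mathfrak{m}^2$, while the Lie bracket on this cotangent space comes from the commutative multiplication on $A$ --- which is to say, the roles of bracket and cobracket are exchanged compared to $\fg$. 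A direct computation, using that $\hh\fg \subset U_\hh'$ provides the generators and that $\delta^2$ on these elements picks out the cobracket of $\fg$, shows that this cotangent Lie bialgebra is $\fg^\ast$.

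The principal obstacle is the precise bookkeeping in step one, namely showing that $U_\hh'$ is closed under multiplication with the correct powers of $\hh$ tracked through iterated coproducts. This amounts to verifying the Eilenberg--Zilber type shuffle identities relating $\delta^n(ab)$ to $\delta^{\bullet}(a)\cdot \delta^{\bullet}(b)$; once this combinatorial lemma is in place, the remaining arguments are natural and the identification with $\O(G^\ast)$ follows from the universal property of the classical dual Poisson group attached to $\fg^\ast$, together with connectedness of the group obtained by integrating $\fg^\ast$.
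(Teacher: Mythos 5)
This result is quoted from Gavarini [Ga1, Theorem 2.2] (the quantum duality principle); the paper does not supply its own proof, so there is no internal argument to compare against. Your outline does track the structure of Gavarini's argument---closure of $U_\hh'$ under the Hopf operations, commutativity of $U_\hh'/\hh U_\hh'$ together with its induced Poisson bracket, and identification of its spectrum with a dual Poisson group $G^\ast$---and you correctly single out the Eilenberg--Zilber bookkeeping in step one, showing $\delta^n(ab)\in\hh^n H^{\otimes n}$ whenever $a,b\in U_\hh'$, as where the real work lies.

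One piece of the sketch is inverted, though. You write that the Poisson bracket on $A = U_\hh'/\hh U_\hh'$ restricts to a \emph{cobracket} on the cotangent space $\mathfrak{m}/\mathfrak{m}^2$ and that the Lie \emph{bracket} there comes from the commutative multiplication. In fact the Poisson bracket of a Poisson algebraic group linearizes at the identity to a Lie \emph{bracket} on $\mathfrak{m}/\mathfrak{m}^2$ (the commutative multiplication by itself contributes nothing), while the cobracket on $\mathfrak{m}/\mathfrak{m}^2$---equivalently the Lie bracket on the tangent space $(\mathfrak{m}/\mathfrak{m}^2)^\ast$---is extracted from the Hopf \emph{comultiplication}. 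With these roles corrected, the assertion that the tangent Lie bialgebra is $\fg^\ast$ (so the cotangent Lie bialgebra recovers $\fg$, with bracket and cobracket exchanged once you dualize) is exactly the statement you are aiming for. Since this is a bookkeeping confusion rather than a flaw in strategy, the plan would carry through once redone carefully, and the delicate $\hh$-adic estimates you flag are precisely what Gavarini's papers [Ga1], [Ga2] are devoted to.
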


By \cite{G2}, for any basis $\left\{\overline{x}_\alpha\right\}$ of $\fg$, there exists a lift $\left\{x_\alpha\right\}$ in $U_\hh$ such that
\begin{itemize}
\item $\epsilon(x_\alpha) = 0$,
\item ${U_\hh}'$ is generated by $\left\{
    \hh x_\alpha \right\}$, and
\item ordered monomials in these
  generators span ${U_\hh}'$ over $\kb[[\hh]]$.
\end{itemize}
In particular, if $\left\{\overline{x}_i\right\}$ generates $\fg$, then $\left\{\hh x_i + \hh {U_\hh}'\right\}$ generates ${U_\hh}'/\hh {U_\hh}'$ as a Poisson algebra.

To allow for easier identification of ${U_\hh}'/\hh {U_\hh}'$ and $ \O(G^\ast)$, we can reformulate Theorem \ref{DG Duality} as follows.  Consider
$$
        \mathcal{L} = \textrm{Der} ({U_\hh}'/\hh {U_\hh}') := \left\{\left. \varphi: {U_\hh}'/\hh {U_\hh}'\rightarrow\C \right| \varphi(ab) = \varphi(a)\ep(b)+\ep(a)\varphi(b)\right\}
$$
with Lie bracket
$$
        [\varphi, \phi](a) = (\varphi\otimes\phi)(\Delta(a) - \Delta^{op}(a))
$$
and cobracket
$$
        \delta(\varphi)(a\otimes b) = \varphi(\{a,b\})
$$
This is the Lie bialgebra of the Poisson algebraic group $ Spec({U_\hh}'/\hh {U_\hh}') $.

The isomorphism described in Theorem \ref{DG Duality} can be rephrased as follows.
\begin{corollary} \label{DG Duality2}
There is an isomorphism of Lie bialgebras $\fg^\ast \cong \mathcal{L}$ defined by
\[ \overline{y} \longmapsto \Big( \hh x + \hh {U_\hh}' \longmapsto \langle \overline{y}, \overline{x}\rangle \Big) \]
for $x$ a lift of $\overline{x}\in\fg$, extended by the Leibniz rule.  This isomorphism yields a perfect Poisson--Hopf pairing $\langle\cdot,\cdot\rangle : U(\fg^\ast)\times {U_\hh}'/\hh {U_\hh}' \rightarrow \C$.
\end{corollary}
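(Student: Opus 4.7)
The plan is to produce the map, verify it is well-defined and a Lie bialgebra isomorphism, and then extend it to the claimed Hopf pairing.

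First, I would check well-definedness. By the result of \cite{G2} recalled just before the corollary, the classes $\{\hh x_\alpha + \hh {U_\hh}'\}$ generate ${U_\hh}'/\hh {U_\hh}'$ as a commutative algebra, and $\ep(x_\alpha) = 0$. Setting $\varphi_{\bar y}(\hh x_\alpha + \hh {U_\hh}') = \langle \bar y, \bar x_\alpha\rangle$ and extending by the Leibniz rule at $\ep$ determines a linear functional on all of ${U_\hh}'/\hh {U_\hh}'$. Consistency of the extension is automatic: any functional satisfying the Leibniz rule at $\ep$ must vanish on $(\ker\ep)^2$, and $\ker\ep/(\ker\ep)^2$ is spanned by the images of $\{\hh x_\alpha\}$; so the formula descends well-definedly to this quotient and then lifts uniquely by Leibniz. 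Hence $\varphi_{\bar y} \in \mathcal{L}$.

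Second, I would identify $\mathcal{L}$ abstractly as the tangent Lie bialgebra at the identity of the Poisson algebraic group $G^\ast = \mathrm{Spec}({U_\hh}'/\hh {U_\hh}')$. Derivations at $\ep$ are precisely tangent vectors at the identity, the bracket formula $[\varphi,\phi](a) = (\varphi \otimes \phi)(\Delta(a) - \Delta^{op}(a))$ recovers the Lie bracket on $\mathrm{Lie}(G^\ast)$ as primitive elements of the dual Hopf algebra, and $\delta(\varphi)(a \otimes b) = \varphi(\{a,b\})$ reproduces the tangent Lie cobracket from the Poisson structure. By Theorem \ref{DG Duality}, this tangent Lie bialgebra is isomorphic to $\fg^\ast$.

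Third, I would check that $\bar y \mapsto \varphi_{\bar y}$ realizes this abstract isomorphism concretely. Gavarini's structural result also furnishes an identification $\ker\ep/(\ker\ep)^2 \cong \fg$ given on generators by $\hh x_\alpha + \hh {U_\hh}' + (\ker\ep)^2 \mapsto \bar x_\alpha$; dually, the tangent space at the identity of $G^\ast$ identifies with $\fg^\ast$ via evaluation. Our formula is exactly this evaluation pairing on generators, so it produces the canonical Lie bialgebra isomorphism of Theorem \ref{DG Duality}.

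Finally, to obtain the Poisson-Hopf pairing I would extend $\bar y \mapsto \varphi_{\bar y}$ multiplicatively via the universal property of $U(\fg^\ast)$ to a Hopf algebra map $U(\fg^\ast) \to ({U_\hh}'/\hh {U_\hh}')^\circ$ into the Hopf dual, and rewrite this as the pairing in the statement. Non-degeneracy follows from the isomorphism ${U_\hh}'/\hh {U_\hh}' \cong \O(G^\ast)$ together with connectedness of $G^\ast$, so that matrix coefficients of $U(\fg^\ast)$ separate points on $G^\ast$. Poisson compatibility of the pairing with the bracket on $U(\fg^\ast)$ reduces, on primitives, to the cobracket compatibility already verified. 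The main technical nuisance is keeping track of various topological completions and sign conventions relating $\Delta$ and $\Delta^{op}$; the substantive content is simply the translation of Theorem \ref{DG Duality} into the language of evaluation pairings via Gavarini's generators.
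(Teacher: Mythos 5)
The paper does not supply an explicit proof of this corollary; it is presented as a direct ``rephrasing'' of Theorem \ref{DG Duality} after the definition of $\mathcal{L}$ with its bracket and cobracket. Your proposal fills in exactly the reformulation steps the paper takes for granted: identifying $\mathcal{L}$ as the tangent Lie bialgebra of $\operatorname{Spec}({U_\hh}'/\hh{U_\hh}')$, using Gavarini's generators $\{\hh x_\alpha\}$ to pin down the isomorphism concretely via the evaluation pairing on $\ker\ep/(\ker\ep)^2$, and extending by the universal property of $U(\fg^\ast)$ to obtain the perfect Poisson--Hopf pairing. This is the same route, spelled out in more detail, and the argument is correct.
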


\subsection{Our Yangian}
\label{OurYangian}

We will now apply this theory to the Drinfeld Yangian $U_\hh \gp$.  We let $Y := (U_\hh \gp)'$.  We will refer to $ Y $ as the Yangian from now on.  Note that it is a subalgebra of the usual Yangian.

For $ X = E_\alpha,H_i, F_\alpha $, and $ r \ge 1 $, we define $ X^{(r)} = hx^{(r-1)}$.  From the general remarks above these elements generate $Y$ and monomials in these generators give a PBW basis for $ Y $.  We define a grading on $ Y $ where $ X^{(r)}$ has degree $ r $.

\begin{theorem}
\label{OurYPresentation}
The $ X^{(r)} $ generate $ Y $ subject to the relations
\begin{align*}
[H_i^{(s)}, H_j^{(s)}] &= 0,  \\
[E_i^{(r)}, F_j^{(s)}] &= \hh \delta_{ij} H_i^{(r+s-1)},
\\
[H_i^{(1)}, E_j^{(s)}] &= \hh (\alpha_i, \alpha_j) E_j^{(s)}, \\
[H_i^{(r+1)},E_j^{(s)}] - [H_i^{(r)}, E_j^{(s+1)}] &= \frac{\hh (\alpha_i, \alpha_j)}{2} (H_i^{(r)} E_j^{(s)} + E_j^{(s)} H_i^{(r)}) , \\
[H_i^{(1)}, F_j^{(s)}] &= -\hh (\alpha_i, \alpha_j) F_j^{(s)}, \\
[H_i^{(r+1)},F_j^{(s)}] - [H_i^{(r)}, F_j^{(s+1)}] &= -\frac{\hh (\alpha_i, \alpha_j)}{2} (H_i^{(r)} F_j^{(s)} + F_j^{(s)} H_i^{(r)}) , \\
[E_i^{(r+1)}, E_j^{(s)}] - [E_i^{(r)}, E_j^{(s+1)}] &= \frac{\hh (\alpha_i, \alpha_j)}{2} (E_i^{(r)} E_j^{(s)} + E_j^{(s)} E_i^{(r)}), \\
[F_i^{(r+1)}, F_j^{(s)}] - [F_i^{(r)}, F_j^{(s+1)}] &= -\frac{\hh (\alpha_i, \alpha_j)}{2} (F_i^{(r)} F_j^{(s)} + F_j^{(s)} F_i^{(r)}),\\
i \neq j, N = 1 - a_{ij} \Rightarrow
\operatorname{sym} &[E_i^{(r_1)}, [E_i^{(r_2)}, \cdots
[E_i^{(r_N)}, E_j^{(s)}]\cdots]] = 0 \\
i \neq j, N = 1 - a_{ij} \Rightarrow
\operatorname{sym} &[F_i^{(r_1)}, [F_i^{(r_2)}, \cdots
[F_i^{(r_N)}, F_j^{(s)}]\cdots]] = 0 \\
E_{\alpha_i} &= E_i \\
[E_{\hat{\alpha}}^{(r)}, E_{\check{\al}}^{(1)}] &= \hh E_{\alpha}^{(r)} \\
F_{\alpha_i} &= F_i \\
[F_{\hat{\alpha}}^{(r)}, F_{\check{\al}}^{(1)}] &= \hh F_{\alpha}^{(r)}
\end{align*}
\end{theorem}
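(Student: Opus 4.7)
The strategy is to leverage the embedding $Y \subset U_\hh\gp$ and transport the Drinfeld Yangian presentation through the rescaling $X^{(r)} = \hh x^{(r-1)}$. By Drinfeld--Gavarini duality together with Gavarini's explicit generation statement recalled in Section \ref{OurYangian}, the elements $E_\alpha^{(r)}, H_i^{(r)}, F_\alpha^{(r)}$ generate $Y$ over $\C[[\hh]]$, and by the PBW theorem for $U_\hh\gp$ (Proposition of Section \ref{PBW Yangian}) ordered monomials in these generators form a $\C[[\hh]]$-basis of $Y$ (they are $\hh^N$-multiples of a PBW basis of $U_\hh\gp$, hence linearly independent). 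It therefore suffices to check that the stated relations hold in $Y$ and that they are complete, in the sense that the algebra $\tilde Y$ defined by the presentation is spanned by ordered monomials in its generators.

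Checking that the relations hold is a matter of rescaling each Drinfeld Yangian relation by the appropriate power of $\hh$. For example, multiplying $[h_i^{(r-1)}, e_j^{(s-1)}] - [h_i^{(r-2)}, e_j^{(s)}] = \tfrac{\hh(\alpha_i,\alpha_j)}{2}(h_i^{(r-2)} e_j^{(s-1)} + e_j^{(s-1)} h_i^{(r-2)})$ by $\hh^2$ and reindexing gives precisely the stated $[H_i^{(r+1)}, E_j^{(s)}] - [H_i^{(r)}, E_j^{(s+1)}]$ relation; the base relation $[H_i^{(1)}, E_j^{(s)}] = \hh(\alpha_i,\alpha_j) E_j^{(s)}$ comes similarly from the $r=0$ Drinfeld relation $[h_i^{(0)}, e_j^{(s-1)}] = (\alpha_i,\alpha_j) e_j^{(s-1)}$. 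The commutator $[E_i^{(r)}, F_j^{(s)}] = \hh\delta_{ij} H_i^{(r+s-1)}$ is obtained by multiplying $[e_i^{(r-1)}, f_j^{(s-1)}] = \delta_{ij} h_i^{(r+s-2)}$ by $\hh^2$; the Serre relations rescale by $\hh^{N+1}$; and the recursive definition $e_\alpha^{(r)} = [e_{\hat\alpha}^{(r)}, e_{\check\alpha}^{(0)}]$ rescales by $\hh^2$ to give $[E_{\hat\alpha}^{(r)}, E_{\check\alpha}^{(1)}] = \hh E_\alpha^{(r)}$. This produces a surjective $\C[[\hh]]$-algebra map $\tilde Y \twoheadrightarrow Y$.

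To show this map is an isomorphism, it remains to prove that ordered monomials in $E_\alpha^{(r)}, H_i^{(r)}, F_\alpha^{(r)}$ span $\tilde Y$ over $\C[[\hh]]$, since their images are linearly independent in $Y$ as observed above. This is a diamond-lemma straightening argument parallel to the PBW argument for $U_\hh\gp$: the stated relations allow us to commute any two generators past each other, modulo an error term that is either a product of generators of strictly smaller total index $\sum r_i$, or a commutator that can be re-expressed via the recursive definitions of $E_\alpha^{(r)}$ and $F_\alpha^{(r)}$. The main obstacle, and the only genuinely nontrivial point, is verifying that this straightening procedure terminates and produces a spanning set of ordered monomials; this is handled by a nested induction, on total degree $\sum r_i$ and on root height for the non-simple root generators. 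Because every relation in $\tilde Y$ is the image of a corresponding relation in $U_\hh\gp$ scaled by a power of $\hh$, the termination of the straightening in $\tilde Y$ is a direct lift of the well-known PBW straightening in $U_\hh\gp$.
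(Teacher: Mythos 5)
The paper states this theorem without proof, so there is no proof to compare against; the approach you take — rescaling the Drinfeld presentation by $X^{(r)} = \hh x^{(r-1)}$ and lifting the Drinfeld PBW theorem — is the natural one and is essentially the right strategy. The verification that the displayed relations hold in $Y$ by reindexing and clearing $\hh$-powers from the Drinfeld relations is correct and routine.

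The one place that needs more care is the spanning claim for the abstractly presented algebra $\tilde Y$. You assert that the straightening "is a direct lift of the well-known PBW straightening in $U_\hh\gp$," but this glosses over the central issue: a straightening step in $U_\hh\gp$, re-expressed in the $X$-variables via $x^{(r)} = X^{(r+1)}/\hh$, a priori introduces negative powers of $\hh$, since a degree-$d$ monomial in the $x$'s pulls out $\hh^{-d}$. The reason this does not actually happen is a degree bound you should make explicit: every Drinfeld relation rewrites a bracket of total generator-degree $d$ as a $\C[[\hh]]$-linear combination of monomials of generator-degree $d' \le d$. Hence when scaled, the resulting coefficient picks up a factor $\hh^{d-d'}$ with $d - d' \ge 0$, and the straightening stays inside $\C[[\hh]]$. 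Relatedly, the recursive relations $[E_{\hat{\alpha}}^{(r)},E_{\check{\alpha}}^{(1)}]=\hh E_\alpha^{(r)}$ can only be used in the degree-decreasing direction (replacing a commutator by $\hh E_\alpha^{(r)}$); applying them the other way, to expand a non-simple root vector $E_\alpha^{(r)}$ into simple ones, would divide by $\hh$. You need to check (again via the degree bound) that the PBW straightening of $U_\hh\gp$, applied to a monomial in $x_\alpha^{(r)}$'s, $h_i^{(r)}$'s, $f_\alpha^{(r)}$'s, never expands a non-simple root vector but only uses the commutation relations among all root vectors — which are themselves consequences of the relations and are degree-non-increasing. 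With that observation spelled out, the argument that ordered monomials span $\tilde Y$ over $\C[[\hh]]$, combined with their linear independence in $Y$ (coming from the Gavarini PBW basis), does give the desired isomorphism $\tilde Y \cong Y$.

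As an alternative that sidesteps some of this, one could argue: modulo $\hh$ all the relations of $\tilde Y$ reduce either to tautologies or to the statement that the algebra is commutative, so $\tilde{Y}/\hh\tilde{Y}$ is a polynomial ring on the generators; the surjection $\tilde{Y}/\hh\tilde{Y}\twoheadrightarrow Y/\hh Y\cong \O(\Gm)$ is then checked to be an isomorphism by a Hilbert-series count (both sides being polynomial rings in one variable for each $x_\alpha t^r$), and then $\hh$-adic completeness and separatedness upgrade this to an isomorphism $\tilde{Y}\cong Y$. This buys you a cleaner treatment of the $\C[[\hh]]$-coefficient issue at the cost of invoking completion.
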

\addtocounter{equation}{1}

We can repackage these generators relations using generating series.  Let
$$ E_i(u) = \sum_{s = 1}^\infty E_i^{(s)} u^{-s}, \quad H_i(u) = 1 + \sum_{s = 1}^\infty H_i^{(s)} u^{-s}, \quad F_i(u) = \sum_{s = 1}^\infty F_i^{(s)} u^{-s}
$$
Then the above relations can be written in series form.  For example the series version of the commutator relation between $ E_i $ and $ F_i $ is
\begin{equation}
[E_i(u), F_j(v)] = \delta_{ij} \frac{\hh}{u-v} \bigl( H_i(u) - H_i(v) \bigr), \label{EFseries}
\end{equation}

\begin{remark}
Note that the Drinfeld Yangian $ U_\hh \gp $ and our Yangian $ Y $  have natural $ \C[\hh] $-forms; moreover their $ \hh = 1 $ specializations $ U_1 \gp $  and $ Y_1 $ coincide as Hopf algebras.  The gradings on $ U_\hh \gp $ and on $ Y $ give rise to two different filtrations on $ Y_1$.  In the work of Brundan-Kleshchev \cite{BK}, these filtrations appear as the ``loop filtration'' and the ``Kazhdan filtration'', respectively.
\end{remark}

\subsection{Identification of Yangian with functions of \texorpdfstring{$ \Gm$}{
G\_1[t\textasciicircum {-1}]
}}

From the results described above, we can deduce that there is a perfect Hopf pairing between $U(\gm)$ and $Y/\hh Y$, as per Corollary \ref{DG Duality2}.
Let us denote by $Q$ the root lattice for $\fg$, let $ Q_+ $ denote the positive root cone, and let $ Q_> = Q_+ \smallsetminus \{0\} $, $ Q_< = - Q_> $.

\begin{lemma} \label{Y0 graded}
The Drinfeld Yangian $U_\hh \gp $, $Y$, and $Y/hY$, are all $Q$-graded Hopf algebras (all tensor products being graded by total degree).  The pairing between $U(\gm)$ and $Y/hY$ respects this grading.

\begin{proof}
The Hopf grading on these spaces is induced by the action of the elements $h_i^{(0)}$ (resp. $H_i^{(1)}$). In each case, coproducts preserve total degree since the coproduct is a homomorphism and the above elements are Lie algebra-like.

It is clear from the formulas of Corollary \ref{DG Duality2} that the pairing between $U(\gm)$ and $Y/hY$ respects the grading for pairings $\left\langle y, x\right
\rangle$, when $y\in \gm$, $x\in Y_0$.  The result follows for monomials $y_1\cdots y_k\in U(\gm)$ by induction on $k$.
\end{proof}
\end{lemma}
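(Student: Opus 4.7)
My plan is to build up the $Q$-grading on each of the three algebras, verify Hopf compatibility, and then reduce the pairing statement to the case of single generators using the Hopf pairing structure.

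First I would define the $Q$-grading on $U_\hh \gp$ by declaring $E_\alpha^{(r)}$ to have grade $\alpha$, $F_\alpha^{(r)}$ to have grade $-\alpha$, $H_i^{(r)}$ to have grade $0$, and $\hh$ to have grade $0$. An inspection of the defining relations of the Drinfeld Yangian shows they are all homogeneous with respect to this assignment. More intrinsically, this is the weight decomposition under the adjoint action of $h_i^{(0)}$: the grade-$\beta$ piece is $\{x : [h_i^{(0)}, x] = (\alpha_i, \beta) x \text{ for all } i\}$. To show the grading is compatible with the Hopf structure, I would invoke the fact that $h_i^{(0)}$ is primitive in $U_\hh \gp$, so $\Delta$ intertwines the adjoint action of $h_i^{(0)}$ with the Leibniz-type action on the tensor product, and hence $\Delta$ preserves the tensor-product grading by total degree.

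Next I would restrict to $Y$: by the PBW description of the previous subsection, $Y$ is spanned over $\C[[\hh]]$ by ordered monomials in the homogeneous elements $X^{(r)} = \hh x^{(r-1)}$, so the $Q$-grading on $U_\hh \gp$ restricts to a $Q$-grading on $Y$, and since $\hh$ has grade $0$ it descends further to $Y/\hh Y$. The coproduct compatibility descends from $U_\hh \gp$ because $Y$ is a Hopf subalgebra.

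For the pairing, the base case is essentially immediate from Corollary \ref{DG Duality2}: for $\bar{y} \in \gm$ and a generator $\hh x + \hh Y \in Y/\hh Y$, the pairing is $\langle \bar{y}, \bar{x} \rangle$ computed via the Manin pairing on $\fg((t^{-1}))$. That pairing comes from the $\fg$-invariant form via residue, so it annihilates pairs of root vectors whose weights do not sum to zero; in particular, it vanishes on $\gm_\beta \times \gp_\gamma$ whenever the grades are not matched (with the appropriate sign convention).

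The step I expect to need the most care is extending from this base case to general monomials $y_1 \cdots y_k \in U(\gm)$, which is the main obstacle. The idea is to use that the Drinfeld--Gavarini pairing is a Hopf pairing, so
\begin{equation*}
\langle y_1 \cdots y_k, x \rangle = \langle y_1 \otimes \cdots \otimes y_k, \Delta^{k-1}(x) \rangle .
\end{equation*}
Since $Y/\hh Y$ is $Q$-graded as a Hopf algebra, $\Delta^{k-1}(x)$ lies in the total-grade-$\gamma$ part of the $k$-fold tensor power when $x$ has grade $\gamma$. Expanding into homogeneous components and applying the base case factor by factor, the sum vanishes unless the grades on the two sides align, completing the induction on $k$.
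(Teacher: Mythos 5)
Your proposal is correct and follows essentially the same route as the paper: grading by the adjoint action of $h_i^{(0)}$ (equivalently $H_i^{(1)}$ in $Y$), Hopf compatibility from the fact that these elements are primitive, the base case of the pairing read off from Corollary \ref{DG Duality2} and the Manin-triple form, and the extension to monomials by induction on length using the Hopf pairing identity $\langle y_1 \cdots y_k, x \rangle = \langle y_1 \otimes \cdots \otimes y_k, \Delta^{k-1}(x) \rangle$. You have merely spelled out the steps that the paper states tersely.
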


For $\alpha\in Q$, let $Y(\alpha)$ be the corresponding component of $Y/hY$ as per Lemma \ref{Y0 graded}.

\begin{proposition} \label{DG pairing}
In $Y/hY$ we have:
\[\Delta(H_i^{(r)})  = H_i^{(r)} \otimes 1 + 1 \otimes H_i^{(r)} + \sum_{s=1}^{r-1}{ H_i^{(s)}\otimes H_i^{(r-s)}} + \bigoplus_{\substack{\alpha+\beta = 0\\\alpha\in Q_<, \beta\in Q_>}} Y(\alpha)\otimes Y(\beta) \]
\[\Delta(E_i^{(r)})  = E_i^{(r)}\otimes 1 + 1\otimes E_i^{(r)} + \sum_{s=1}^{r-1}{ H_i^{(s)}\otimes E_i^{(r-s)}} + \bigoplus_{\substack{\alpha + \beta = \alpha_i\\ \alpha\in Q_<, \beta\in Q_>}} Y(\alpha)\otimes Y(\beta) \]
\[\Delta(F_i^{(r)})  = F_i^{(r)}\otimes 1 + 1\otimes F_i^{(r)} + \sum_{s=1}^{r-1}{ F_i^{(s)}\otimes H_i^{(r-s)}} + \bigoplus_{\substack{\alpha+\beta = -\alpha_i\\ \alpha\in Q_<, \beta\in Q_>}} Y(\alpha)\otimes Y(\beta)\]
\end{proposition}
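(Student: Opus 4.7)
The plan is to combine the $Q$-grading of Lemma \ref{Y0 graded} with the triangular coproduct structure of the Drinfeld Yangian, transferred to $Y = (U_\hh\gp)'$ via the Gavarini construction. By the $Q$-grading, $\Delta(H_i^{(r)}) \in \bigoplus_{\alpha+\beta = 0} Y(\alpha) \otimes Y(\beta)$, and analogously for $E_i^{(r)}$ and $F_i^{(r)}$. The content of the proposition thus splits into: (a) identifying the ``main-diagonal'' component of each coproduct (the $Y(0) \otimes Y(0)$ part for $H_i^{(r)}$, the $Y(0) \otimes Y(\pm\alpha_i)$ and $Y(\pm\alpha_i) \otimes Y(0)$ parts for $E_i^{(r)}, F_i^{(r)}$) as the explicit sum stated, and (b) showing the remaining off-diagonal components lie only in $Y(Q_<) \otimes Y(Q_>)$, not in the opposite $Y(Q_>) \otimes Y(Q_<)$.

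For part (a), I would work directly in the Drinfeld Yangian. Using the defining relations one checks inductively that, modulo terms involving root-space factors on both tensor slots,
\[
\Delta(h_i^{(r-1)}) \equiv h_i^{(r-1)} \otimes 1 + 1 \otimes h_i^{(r-1)} + \hh \sum_{s=0}^{r-2} h_i^{(s)} \otimes h_i^{(r-2-s)},
\]
and analogously for $\Delta(e_i^{(r-1)})$ and $\Delta(f_i^{(r-1)})$ (where the mixed sums $\sum h_i^{(s)} \otimes e_i^{(r-2-s)}$ and $\sum f_i^{(s)} \otimes h_i^{(r-2-s)}$ appear). Multiplying by $\hh$ and converting to $Y$-generators via $X^{(r)} = \hh x^{(r-1)}$ then yields the stated main-diagonal sums after reindexing.

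Part (b) is the substantive triangularity claim. I would argue it via the pairing with $U(\gm)$ from Corollary \ref{DG Duality2}, combined with the PBW theorem applied to the triangular decomposition $\gm = t^{-1}\mathfrak{n}_-[[t^{-1}]] \oplus t^{-1}\fh[[t^{-1}]] \oplus t^{-1}\mathfrak{n}_+[[t^{-1}]]$, ordered so that negatives precede Cartans precede positives. To detect a putative $Y(\gamma) \otimes Y(-\gamma)$ component of $\Delta(H_i^{(r)})$ for $\gamma \in Q_>$, one pairs against $f_\alpha t^{-a} \otimes e_\alpha t^{-b}$ (of weights $-\gamma$ and $\gamma$); but the product $f_\alpha t^{-a} \cdot e_\alpha t^{-b}$ is already a PBW basis monomial with no Cartan part, so the pairing vanishes. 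In contrast, the opposite $Y(-\gamma) \otimes Y(\gamma)$ component is detected by $e_\alpha t^{-a} \otimes f_\alpha t^{-b}$, whose PBW rearrangement $e_\alpha t^{-a} \cdot f_\alpha t^{-b} = f_\alpha t^{-b} \cdot e_\alpha t^{-a} + h_\alpha t^{-a-b}$ produces a Cartan correction that pairs nontrivially with $H_i^{(r)}$, giving exactly the $Y(Q_<) \otimes Y(Q_>)$ error terms allowed in the statement.

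The main obstacle will be two-fold: formalizing the ``modulo off-diagonal'' inductive claim in part (a) via an appropriate filtration of $U_\hh\gp$, and bookkeeping the Manin-pairing constants (symmetrizers $d_i$, root normalizers $C_\alpha$) together with the degree shift $X^{(r)} = \hh x^{(r-1)}$. The analogous arguments for $E_i^{(r)}$ and $F_i^{(r)}$ proceed identically: the absence of a hypothetical $\sum E_i^{(s)} \otimes H_i^{(r-s)}$ in the $Y(\alpha_i) \otimes Y(0)$ component of $\Delta(E_i^{(r)})$, for instance, is verified by pairing against $f_i t^{-a} \otimes h_j t^{-b}$, already in PBW order, yielding zero by the same triangular argument.
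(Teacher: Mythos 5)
Your proposal diverges from the paper's proof in a way that introduces a genuine gap. The paper computes $\Delta(H_i^{(2)})$ directly, using the $J$-presentation of $U_\hh\gp$ (generators $x, J(x)$ for $x\in\fg$) where the coproduct formula is known; the off-diagonal term $-\sum_\beta C_\beta(\beta,\alpha_i) F_\beta^{(1)}\otimes E_\beta^{(1)}$ is then seen explicitly to lie in $Y(Q_<)\otimes Y(Q_>)$. It then inducts for $E_i^{(r)}$ via the identity $E_i^{(r+1)} = \frac{1}{(\alpha_i,\alpha_i)}\{H_i^{(2)},E_i^{(r)}\} - H_i^{(1)}E_i^{(r)}$ in the Poisson-Hopf algebra $Y/hY$ (similarly for $F_i^{(r)}$), and finishes with $H_i^{(r)}=\{E_i^{(r)},F_i^{(1)}\}$.

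Your part (a) is in the same spirit, but the phrase ``using the defining relations one checks inductively'' hides a real issue: the Drinfeld Yangian presentation given in the paper (the loop presentation with $e_i^{(r)},h_i^{(r)},f_i^{(r)}$) does not come with an explicit coproduct formula. You need a presentation where the coproduct is known on generators --- as the paper uses the $J$-presentation --- to launch the induction. This is worth saying.

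Your part (b) is circular. To bound the off-diagonal components of $\Delta(H_i^{(r)})$ you pair against $f_\alpha t^{-a}\otimes e_\alpha t^{-b}$, i.e.\ compute $\langle H_i^{(r)}, f_\alpha t^{-a}\cdot e_\alpha t^{-b}\rangle$, and assert this vanishes ``because the product is a PBW monomial with no Cartan part.'' But the pairing of a generator of $Y/hY$ against a degree-two PBW monomial of $U(\gm)$ is defined (via Corollary \ref{DG Duality2} and Hopf duality) precisely by $\langle H_i^{(r)}, y_1 y_2\rangle = \langle\Delta(H_i^{(r)}), y_1\otimes y_2\rangle$ --- which is the quantity you are trying to determine. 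The paper's remark after this proposition makes the logical order explicit: Lemma \ref{Y0 graded} together with Proposition \ref{DG pairing} are what ``completely control the pairing'' against the FHE-ordered PBW basis; the dual-basis description (e.g.\ $-F_i^{(r)}$ dual to $e_i t^{-r}$) is deduced \emph{from} the proposition, not available as a prior input. To carry out your plan you would need an independent argument that $H_i^{(r)}$ pairs only with degree-one PBW monomials, and no such argument is offered. The non-circular route is the paper's: establish the triangularity directly at the level of $\Delta(H_i^{(2)})$ in the $J$-presentation and propagate it by the Poisson-Hopf induction.
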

\begin{proof}
To begin we recall that $\Delta(X^{(1)}) = X^{(1)}\otimes 1 + 1 \otimes X^{(1)}$ for all $x\in\fg$. Also, using the presentation of $U_\hh \gp$ with generators $x, J(x)$ for $x\in \fg$ (for which the coproduct is known), a direct calculation yields
\[ \Delta(H_i^{(2)}) = H_i^{(2)} \otimes 1 + 1\otimes H_i^{(2)} + H_i^{(1)}\otimes H_i^{(1)} - \sum_{\beta\in\Phi_+}{C_\beta (\beta,\alpha_i) F_\beta^{(1)} \otimes E_\beta^{(1)}} \]
where $(e_\beta,f_\beta) = C_\beta^{-1}$.  We prove the coproduct for $E_i^{(r)}$ by induction on $r$, using the identity
\[ E_i^{(r+1)} = \frac{1}{(\alpha_i,\alpha_i)}\left\{H_i^{(2)}, E_i^{(r)}\right\} - H_i^{(1)} E_i^{(r)} \]
The coproduct of the right side is expanded using the Poisson-Hopf algebra relations, the formula for $\Delta(H_i^{(2)})$, and the inductive hypothesis.  The above identity is the then applied again to reduce the terms in the result, and yields the form as claimed.

An analogous induction proves the case of $\Delta(F_i^{(r)})$. Finally, we take the coproduct of the identity
\[ H_i^{(r)} = \left\{E_i^{(r)},F_i^{(1)}\right\} \]
to finish the proof.
\end{proof}

Recall that the pairing between $U(\gm)$ and $Y/hY$ is determined, as per Corollary \ref{DG Duality2}, by the pairing between $\gm$ and $\gp$ given in Section \ref{sec:Poisson structure}. Choose an ``FHE'' total ordering on the generators $f_\alpha t^r, h_i t^r, e_\alpha t^r$ for $U(\gm)$.  Then it is easy to see that the previous lemma and proposition completely control the pairing between $U(\gm)$ and $Y/hY$ for the corresponding PBW basis. For example, $- F_i^{(r)}$ acts as the dual of the basis element $e_i t^{-r}$, etc.

\begin{theorem} \label{th:YangianQuantize}
There is an isomorphism of $\mathbb N $-graded Poisson Hopf algebras $\phi \colon Y/hY \cong \O(\Gm)$
such that
\begin{align*}
\phi(H_i(u)) &= \prod_j{ \Delta_{\omega_j, \omega_j}(u)^{-a_{ji}}} \\
\phi(F_i(u)) &= d_i^{-1/2}\frac{\Delta_{\omega_i,s_i \omega_i}(u)}{\Delta_{\omega_i, \omega_i}(u)} \\
\phi(E_i(u)) &= d_i^{-1/2}\frac{\Delta_{s_i \omega_i, \omega_i}(u)}{ \Delta_{\omega_i,\omega_i}(u)}
\end{align*}
where $ \O(\Gm) $ is graded using the loop rotation $\C^\times $ action.
\end{theorem}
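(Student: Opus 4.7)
The plan is to build $\phi$ in two stages: first produce it abstractly from Drinfeld--Gavarini duality, then verify the explicit formulas by matching Hopf pairings against $U(\gm)$.

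Abstractly, Theorem \ref{DG Duality} applied to the Drinfeld Yangian $U_\hh\gp$ yields a Poisson Hopf algebra isomorphism $Y/\hh Y \cong \O(G^\ast)$, where $G^\ast$ integrates the dual Lie bialgebra $(\gp)^\ast$. The Manin triple $(\gp,\gm,\fg((t^{-1})))$ identifies $(\gp)^\ast$ with $\gm$, and $\Gm$ is a connected group with $\gm$ as its Lie bialgebra, so we obtain an isomorphism of Poisson Hopf algebras $\phi\colon Y/\hh Y\to\O(\Gm)$. The loop-rotation grading on $\O(\Gm)$ is compatible with the grading on $Y$ in which $X^{(r)}$ has degree $r$, since both come from the action of $\C^\times$ with weight $-1$ on $t$; thus $\phi$ is $\mathbb{N}$-graded.

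The real work is to pin down $\phi$ on the series $H_i(u)$, $E_i(u)$, $F_i(u)$. By Corollary \ref{DG Duality2}, $\phi$ is equivalent data to the induced Hopf pairing $\langle\,\cdot\,,\,\cdot\,\rangle\colon U(\gm)\times Y/\hh Y\to\C$, and by Lemma \ref{Y0 graded} together with Proposition \ref{DG pairing} this pairing is completely determined by its values on an ``FHE''-ordered PBW basis $\{f_\alpha t^{-r},\,h_i t^{-r},\,e_\alpha t^{-r}\}$ of $\gm$ combined with the explicit coproducts. So I would compute the pairing of each claimed right-hand side against such PBW monomials and check agreement with the pairing dictated by the coproduct formulas of Proposition \ref{DG pairing}. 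The key computational input is the identity $\langle y_1\cdots y_k,\Delta_{\beta,v}(u)\rangle = \sum_s u^{-s}\langle \beta, y_1\cdots y_k v\rangle_s$, which follows from the Leibniz rule in Corollary \ref{DG Duality2} and expresses the pairing as a matrix coefficient of the action of the product $y_1\cdots y_k$ on $V(\tau)$, with the coefficient of $u^{-s}$ selecting the power $t^{-s}$.

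For $F_i(u)$, expanding $\Delta_{\omega_i,s_i\omega_i}(u)/\Delta_{\omega_i,\omega_i}(u)$ as a geometric series in $u^{-1}$ and pairing against ``$E$''-monomials recovers precisely the coefficients prescribed by the coproduct of $F_i(u)$; $E_i(u)$ is handled symmetrically. For the diagonal series $H_i(u)$, the product $\prod_j\Delta_{\omega_j,\omega_j}(u)^{-a_{ji}}$ is tested against ``$H$''-monomials, and the exponents $-a_{ji}$ drop out of the two identities $\omega_j(h_i')=\delta_{ij}$ and $h_i=d_i h_i'$ from Section \ref{sec:notation}, which force the pairing of $h_i t^{-r}$ with $\log\Delta_{\omega_j,\omega_j}(u)$ to reproduce the Cartan matrix. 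The factors $d_i^{-1/2}$ in the $E$ and $F$ formulas similarly track the Chevalley-to-Drinfeld rescaling $e_i=-d_i^{1/2}e_i'$, $f_i=-d_i^{1/2}f_i'$.

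The main obstacle is controlling the ``mixed'' summand $\bigoplus_{\alpha<0,\beta>0}Y(\alpha)\otimes Y(\beta)$ appearing in the coproducts of Proposition \ref{DG pairing}: it contributes higher-order cross terms to the Leibniz expansion of the pairing, and these must match exactly the cross terms produced by expanding a ratio or power product of minors. Tracking this grading-component by grading-component is the real content of the computation, but it is forced because both sides are generated under the Poisson bracket and product by low-weight elements, where direct verification is straightforward, and then the Hopf algebra structure propagates the identification to all of $Y/\hh Y$.
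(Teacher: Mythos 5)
Your proposal is correct and follows essentially the same strategy as the paper: produce the isomorphism via Drinfeld--Gavarini duality and then verify the explicit formulas by computing the Hopf pairing against an FHE-ordered PBW basis of $U(\gm)$, comparing with the coproducts recorded in Proposition \ref{DG pairing}.

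Two small differences from the paper's argument are worth recording. First, you propose to verify the $H_i(u)$ formula directly by pairing $\prod_j \Delta_{\om_j,\om_j}(u)^{-a_{ji}}$ against elements $h_k t^{-r}\in\gm$ and extracting the Cartan matrix from $\om_j(h_k')=\delta_{jk}$ and $h_k=d_kh_k'$; the paper instead deduces $\phi(H_i(u))$ from the already-established $\phi(E_i(u))$ and $\phi(F_i(v))$ through the relation $\frac{\phi(H_i(u))-\phi(H_i(v))}{u-v}=\{\phi(E_i(u)),\phi(F_i(v))\}$ together with \eqref{eq:minor-bracket} and identities for generalized minors. Both routes are sound; the direct pairing of primitives against $\log$ of the minor product is arguably the cleaner one. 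Second, you obtain bijectivity of $\phi$ abstractly, by invoking Theorem \ref{DG Duality} to get $Y/\hh Y\cong\O(G^*)$ and then identifying $G^*$ with $\Gm$, whereas the paper defines $\phi$ by the formulas, proves injectivity from nondegeneracy of the Hopf pairing, and then proves surjectivity by matching Hilbert series, both sides being $\prod_{i\geq1}(1-q^i)^{-\dim\fg}$. If you keep the abstract route, you should say explicitly why $G^*\cong\Gm$: Gavarini's theorem only says $G^*$ is connected with the right tangent Lie bialgebra, and a connected algebraic group is not in general determined by its Lie algebra; what saves you is that $\Gm$ is pro-unipotent, so its pro-nilpotent Lie algebra integrates uniquely. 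The paper's dimension count sidesteps that subtlety entirely, which is why it is included there even though it looks redundant once you have the abstract duality in hand.
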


\begin{proof}
We check explicitly that the right-hand sides act as described by the previous proposition.  Let $X =(x_1 t^{r_1})\cdots(x_k t^{r_k})\in U(\gm)$ be a basis monomial with the FHE order as chosen above.  Then we have
\[ \frac{\Delta_{\omega_i, s_i\omega_i}(u)}{\Delta_{\omega_i, \omega_i}(u)}(X) = -d_i^{-1/2}\left. \frac{\partial^k}{\partial z_1\cdots\partial z_k} \frac{\langle v_{-\omega_i}, (1+z_1 u^{r_1} x_1)\cdots (1+z_k u^{r_k} x_k) f_i v_{\omega_i}\rangle}{\langle v_{-\omega_i}, (1+z_1 u^{r_1} x_1)\cdots (1+z_k u^{r_k} x_k) v_{\omega_i}\rangle}\right|_{z_1=\ldots=z_k=0}  \]
noting that $\overline{s_i}v_{\omega_i} = f_i' v_{\omega_i} = - d_i^{-1/2} f_i v_{\omega_i}$ in the generalized minor (see Section \ref{sec:notation}).  Since we have an FHE order, to get something nonzero in the right-hand numerator $x_k$ must be a multiple of $e_i$, since $e_i f_i v_{\omega_i} = h_i v_{\omega_i} = d_i v_{\omega_i}$. In this case $z_k u^{r_k} e_i$ does not contribute to the denominator, and the remaining factors cancel leaving
\[ \frac{\Delta_{\omega_i, s_i\omega_i}(u)}{\Delta_{\omega_i, \omega_i}(u)}(X) = -d_i^{1/2} \left. \frac{\partial^k}{\partial z_1\cdots\partial z_k} z_k u^{r_k} \right|_{z_1=\ldots=z_k=0} \]
so $X$ must have been $e_i t^{r}$ to start with.  But this is precisely how $ d_i^{1/2} F_i(u)$ acts on $X$.
Similar computations hold in the two remaining cases.

To prove the equality for $H_i(u)$ one can also work in $\O(\Gm)$, and build off the known results $E_i(u)$ and $F_i(u)$, since we must have
\[ \frac{\phi(H_i(u)) - \phi(H_i(v))}{u-v} = \Big\{\phi(E_i(u)), \phi(F_i(v))\Big\} \]
We can then use formula \eqref{eq:minor-bracket} and identities for generalized minors.

The nondegeneracy of both Hopf pairings implies that $\phi$ is an injection.  It follows that $\phi$ is an isomorphism from a dimension count;
both $Y/hY$ and $\mathcal{O}(\Gm)$ have Hilbert series for the
loop grading given by
\[\prod_{i=1}^\infty \frac{1}{(1-q^{i})^{\dim\fg}}.\]
Indeed for $Y/hY$ this follows from the PBW theorem coming from $Y$, since $Y$ is a free $\C[[h]]$-algebra. On the other hand, the Hilbert series on $ \mathcal{O}(\Gm) $ is the same as the Hilbert series for $ \textrm{Sym} (\gm) $ since as $ \Gm $ is pro-unipotent, we have an isomorphism of vector spaces.

\end{proof}

\subsection{Shifted Yangians}
The Yangian has a very interesting class of subalgebras: the shifted
Yangians.  Let $\mu$ be a dominant  weight.

We will now redefine elements
\begin{equation}
\label{Fredefine}
F_\al^{(s)}=\frac{1}{h}\big[F_{\hat{\al}}^{(s-\langle \mu^*, \check{\alpha} \rangle)}, F_{\check{\al}}^{(\langle \mu^*, \check{\alpha}\rangle+1)} \big].
\end{equation}
for $ \al $ a positive non-simple root and for $ s > \langle \mu^*, \al \rangle $.
Note that these $ F_\al^{(s)} $ depend on $ \mu $.

\begin{definition}
  The {\bf shifted Yangian}  $Y_\mu $ is the subalgebra of $ Y $ generated by
  $E_\alpha^{(s)}$ for all $ \alpha, s$, $H_i^{(s)}$ for all $ i, s$, and $F_\alpha^{(s)}$ for $ s > \langle \mu^*, \alpha \rangle$.
\end{definition}

\begin{proposition}
\begin{enumerate}
 \item  Monomials in the $ E_{\alpha}^{(s)}, H_i^{(s)}, F_\alpha^{(s)} $ give a basis for $ Y_\mu $.
  \item The natural map $ Y_\mu/\hh Y_\mu \rightarrow Y / hY $ is injective.
\end{enumerate}
\end{proposition}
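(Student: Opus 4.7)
\medskip
\noindent\textbf{Proof proposal.}

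The plan is to establish (1) as a PBW-type theorem for $Y_\mu$, and then derive (2) by reducing the resulting basis modulo $\hh$. The key technical input is a triangular comparison between the generator $F_\al^{(s)}$ of $Y_\mu$ defined by \eqref{Fredefine} and the generator of the same name inside $Y$ (which uses $F_{\check{\al}}^{(1)}$ in place of $F_{\check{\al}}^{(\langle \mu^*, \check{\al}\rangle+1)}$). Applying the $FF$-shift relation
\[
[F_{\hat{\al}}^{(r+1)}, F_{\check{\al}}^{(s)}] - [F_{\hat{\al}}^{(r)}, F_{\check{\al}}^{(s+1)}] = -\tfrac{\hh(\hat{\al},\check{\al})}{2}\bigl(F_{\hat{\al}}^{(r)} F_{\check{\al}}^{(s)} + F_{\check{\al}}^{(s)} F_{\hat{\al}}^{(r)}\bigr)
\]
from Theorem \ref{OurYPresentation} iteratively $\langle \mu^*, \check{\al}\rangle$ times (moving the pair $(s-\langle \mu^*, \check{\al}\rangle, \langle \mu^*, \check{\al}\rangle+1)$ back to $(s,1)$) and dividing by $\hh$, I would obtain an identity of the form
\[
F_\al^{(s)}[Y_\mu] = F_\al^{(s)}[Y] + \sum_{r+t=s} c_{rt}\, F_{\hat{\al}}^{(r)} F_{\check{\al}}^{(t)},
\]
where the sum is over $t > \langle \mu^*, \check{\al}\rangle$ and (automatically, by the choice of index) $r > \langle \mu^*, \hat{\al}\rangle$. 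Thus the correction terms are products of admissible $Y_\mu$-generators, and the two families of elements are related by an invertible triangular transformation over $\C[[\hh]]$.

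For spanning in (1), I would induct on total degree and argue that any product of $Y_\mu$-generators can be reordered using the relations of Theorem \ref{OurYPresentation} without ever introducing an $F_\al^{(s)}$ with $s \le \langle \mu^*, \al\rangle$. This requires a short case analysis: the $HF$ relations preserve the root index of $F$; the $FF$-shift relations applied to two admissible $F$'s produce only admissible $F$'s, since they preserve total index; the Serre relations are homogeneous in the $F$'s; and the $EF$ relations yield $H$'s rather than $F$'s. Linear independence over $\C[[\hh]]$ then follows from the triangular identity above: any nontrivial $\C[[\hh]]$-relation among ordered $Y_\mu$-monomials would transport to a nontrivial relation among ordered $Y$-monomials (with leading term the corresponding $Y$-PBW monomial), contradicting the PBW theorem for $Y$ stated in Section \ref{PBW Yangian}.

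Part (2) is then immediate. Suppose $y \in Y_\mu$ maps into $\hh Y$; write $y = \sum c_m m$ in the $\C[[\hh]]$-basis of $Y_\mu$ produced by (1). By the triangular identity, each $m$, viewed in $Y$, equals the corresponding $Y$-PBW monomial plus a $\C[[\hh]]$-combination of PBW monomials of strictly more $F$-factors (still indexed by admissible multi-indices). Hence the images of the $m$ in $Y/\hh Y$ form a $\C$-linearly independent family (they are a triangular modification of part of the classical PBW basis of $Y/\hh Y$), so $y \in \hh Y$ forces each $c_m$ to lie in $\hh \C[[\hh]]$, i.e., $y \in \hh Y_\mu$. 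The main obstacle I anticipate is the case-by-case spanning step: one must verify that each commutation relation of Theorem \ref{OurYPresentation}, restricted to the admissible range of $F$-indices, closes up inside $Y_\mu$. This is subtle for the mixed $FF$-shift relations, where one must track simultaneously the index shifts between $F_\al$, $F_{\hat{\al}}$, and $F_{\check{\al}}$ and confirm that the cutoff $\langle \mu^*, \,\cdot\,\rangle$ is respected at every intermediate step.
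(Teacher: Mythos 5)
Your overall structure (PBW-type theorem for $Y_\mu$, then reduce mod $\hh$) is the right one and matches the paper's, but the central technical claim has the key inequality backwards, and there is a secondary unjustified step.

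Start from the iteration you describe. Applying
\[ [F_{\hat\al}^{(r)}, F_{\check\al}^{(s'+1)}] = [F_{\hat\al}^{(r+1)}, F_{\check\al}^{(s')}] + \tfrac{\hh(\hat\al,\check\al)}{2}\bigl(F_{\hat\al}^{(r)} F_{\check\al}^{(s')} + F_{\check\al}^{(s')} F_{\hat\al}^{(r)}\bigr)\]
$k:=\langle\mu^*,\check\al\rangle$ times to $[F_{\hat\al}^{(s-k)},F_{\check\al}^{(k+1)}]$ produces, after dividing by $\hh$,
\[ F_\al^{(s)}[Y_\mu] \;=\; F_\al^{(s)}[Y] \;+\; \tfrac{(\hat\al,\check\al)}{2}\sum_{j=1}^{k}\bigl(F_{\hat\al}^{(s-j)} F_{\check\al}^{(j)} + F_{\check\al}^{(j)} F_{\hat\al}^{(s-j)}\bigr).\]
The $\check\al$-indices that appear are $j=1,\dots,k$, i.e.\ $j\le\langle\mu^*,\check\al\rangle$, not $j>\langle\mu^*,\check\al\rangle$ as you assert. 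So the correction terms are \emph{not} products of $Y_\mu$-admissible generators: each term $F_{\hat\al}^{(s-j)}F_{\check\al}^{(j)}$ has one admissible and one forbidden factor. This breaks the stated ``invertible triangular transformation inside $Y_\mu$'' and therefore also the final step of your argument for (2), where you use that claim to say the images of admissible monomials in $Y/\hh Y$ form a ``triangular modification of part of the classical PBW basis''. In addition, the $FF$-shift relation in Theorem \ref{OurYPresentation} is stated only for simple-root generators $F_i,F_j$; you are applying it with $i=\hat\al$, and $\hat\al$ is in general not a simple root, so this use would need a separate derivation.

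For comparison, the paper avoids both issues by invoking the Drinfeld--Gavarini machinery directly: the remarks after Theorem \ref{DG Duality} say that \emph{any} lift of a basis of $\fg[t]$ to $U_\hh\gp$ yields, after multiplying by $\hh$, generators of $(U_\hh\gp)'=Y$ whose ordered monomials form a $\C[[\hh]]$-basis. Since the modified family (shifted $F_\al^{(s)}$ for $s>\langle\mu^*,\al\rangle$, usual $F$'s below the cutoff, usual $E$'s and $H$'s) is such a lift, ordered monomials in it form a PBW basis of $Y$ automatically, with no explicit change-of-basis computation, and the $Y_\mu$-admissible monomials are then literally a subset of a basis; (1) follows from the spanning case-analysis you sketch (which is essentially what the paper does), and (2) is immediate. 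Your explicit triangularity route could be salvaged after fixing the two points above, but as written the key identity and the reliance on an unstated relation are genuine gaps.
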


\begin{proof}
We first construct a PBW basis for Y slightly different from the one described in Section \ref{OurYangian}.  The generators $E_{\alpha}^{(s)}$ are defined as usual  (cf. Section \ref{OurYangian}).  The generators $F_{\alpha}^{(s)}$ are given the usual definition when $s\leq\langle\mu^*,\alpha\rangle$, but for $s>\langle\mu^*,\alpha\rangle$ we take definition (\ref{Fredefine}).  By the general remarks following Theorem \ref{DG Duality}, ordered monomials in generators $F_{\alpha}^{(s)}, H_{i}^{(s)}, E_{\alpha}^{(s)}$ are a PBW basis of $Y$.

Any element $x\in Y_\mu$ can be expressed as a linear combination of these PBW monomials.  We now show that any monomials appearing in such an expression do not contain factors of the form $F_{\alpha}^{(s)}$ for $s\leq\langle \mu^*,\alpha\rangle$.

By definition, $x$ is a linear combination of (unordered) monomials in $F_{\alpha}^{(s)}, H_{i}^{(t)}, E_{\alpha}^{(u)}$, where $s>\langle\mu^*,\alpha\rangle$.  To put $x$ in PBW form one has to commute these generators past each other.
By definition, when $s>\langle\mu^*,\alpha\rangle$, $F_{\alpha}^{(s)}$ is a linear combination of monomials built from $F_i^{(t)}$, where $t>\langle\mu^*,\alpha_i\rangle$.  Therefore it suffices to show that when commuting such $F_i^{(t)}$ past the other generators of $Y_\mu$ one never obtains  factors of the form $F_{j}^{(u)}$ for $u\leq\langle \mu^*,\alpha_j\rangle$.  This is a direct consequence of the relations appearing in Theorem \ref{OurYPresentation}.

This proves the first statement of the theorem.  The second part is a direct consequence of the first.

\end{proof}

In the limit as $ \mu \rightarrow \infty $, then we obtain $ Y_\infty
$ which is the subalgebra generated by all $ E_\alpha^{(s)}, A_i^{(s)}
$.  This is called the {\bf Borel Yangian} in \cite{FR}.

We will now show that this shifted Yangian is a quantization of $
\Gr_\mu $.   Recall that $ \O(\Gr_\mu) $ is embedded as a Poisson subalgebra of $ \O(\Gm) $.

\begin{theorem} \label{th:shiftedYangianQuantize}
The isomorphism $ \phi $ restricts to an isomorphism of Poisson algebras from $Y_\mu / \hh Y_\mu $ to $\O(\Gr_\mu)$.
\end{theorem}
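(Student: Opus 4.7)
The strategy is to restrict the Poisson isomorphism $\phi : Y/\hh Y \to \O(\Gm)$ from Theorem \ref{th:YangianQuantize} to $Y_\mu/\hh Y_\mu$ and verify three things: (a) the image lies in $\O(\Gr_\mu)$; (b) the restricted map is injective; (c) it is surjective onto $\O(\Gr_\mu)$. The Poisson structure is automatically preserved, since on each side the bracket is inherited from the ambient algebra. Injectivity (b) is immediate from part (2) of the preceding Proposition: $Y_\mu/\hh Y_\mu \hookrightarrow Y/\hh Y$, which $\phi$ sends injectively into $\O(\Gm)$.

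For (a) I would check containment on generators, combining the explicit formulas of Theorem \ref{th:YangianQuantize} with Lemma \ref{le:GrmuMinors}.  Since $\Delta_{\omega_i,\omega_i}^{(0)}=1$, the series $\Delta_{\omega_i,\omega_i}(u)^{-1}$ has coefficients in $\O(\Gr_\mu)$, and hence $\phi(H_i^{(s)})$ (a polynomial in the $\Delta_{\omega_j,\omega_j}^{(s)}$) and $\phi(E_i^{(s)})$ (a polynomial in the $\Delta_{\omega_j,\omega_j}^{(s)}$ and $\Delta_{s_i\omega_i,\omega_i}^{(s)}$) lie in $\O(\Gr_\mu)$ for every $s$.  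By the third item of Lemma \ref{le:GrmuMinors}, $\phi(F_i^{(s)})$ lies in $\O(\Gr_\mu)$ exactly when $s > \langle\mu^*,\alpha_i\rangle$, matching the generating condition of $Y_\mu$.  For non-simple root vectors I would induct on the height of $\alpha$: the defining commutator $E_\alpha^{(r)} = [E_{\hat\alpha}^{(r)},E_{\check\alpha}^{(1)}]/\hh$ puts $E_\alpha^{(r)}$ in the image by induction, and the redefinition \eqref{Fredefine} expresses $F_\alpha^{(s)}$ (for $s > \langle\mu^*,\alpha\rangle$) through $F_{\hat\alpha}^{(s-\langle\mu^*,\check\alpha\rangle)}$ and $F_{\check\alpha}^{(\langle\mu^*,\check\alpha\rangle+1)}$, each of which satisfies its respective inequality $> \langle\mu^*,\cdot\rangle$, so the induction closes.

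For (c) I would match Hilbert series under the loop-rotation grading.  On the Yangian side, the PBW basis of $Y_\mu$ from part (1) of the preceding Proposition yields
\[ H_{Y_\mu/\hh Y_\mu}(q) \;=\; \prod_{s\geq 1}(1-q^s)^{-(|I|+|\Delta_+|)} \cdot \prod_{\alpha\in\Delta_+}\prod_{s>\langle\mu^*,\alpha\rangle}(1-q^s)^{-1}. \]
On the geometric side, $\Gr_\mu \cong \Gm/\Gm_\mu$; using Lemma \ref{le:LieStab} one computes $\mathrm{Lie}(\Gm_\mu) = \bigoplus_{\alpha\in\Delta_+}\bigoplus_{k=1}^{\langle\alpha,\mu^*\rangle} t^{-k}\fg_\alpha$, so in the quotient $\mathrm{Lie}(\Gm)/\mathrm{Lie}(\Gm_\mu)$ the graded dimension at degree $-k$ is $\dim\fg - \#\{\alpha\in\Delta_+ : \langle\alpha,\mu^*\rangle\geq k\}$.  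Since $\Gm$ and $\Gm_\mu$ are pro-unipotent, $\O(\Gr_\mu)$ is isomorphic as a graded vector space to the symmetric algebra on the dual of this quotient (this is the same pro-unipotence argument used at the end of the proof of Theorem \ref{th:YangianQuantize}), and rearranging the resulting product using $\dim\fg = |I|+2|\Delta_+|$ produces exactly the series displayed above.  Combined with injectivity, this forces $\phi$ to restrict to an isomorphism $Y_\mu/\hh Y_\mu \cong \O(\Gr_\mu)$.

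The main technical point is the Hilbert series of $\O(\Gr_\mu)$: one must be confident that the coordinate ring of the pro-unipotent homogeneous space $\Gm/\Gm_\mu$ has the same graded dimensions as the symmetric algebra on the cotangent space at the basepoint $t^{w_0\mu}$.  Once that is in hand from pro-unipotence, the remaining work is bookkeeping with generator formulas and the induction on root height, where the clean feature is that the shift by $\langle\mu^*,\cdot\rangle$ in the very definition of $Y_\mu$ is tuned precisely so that each non-simple inductive step respects the $\O(\Gr_\mu)$-condition.
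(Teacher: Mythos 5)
Your proof is correct and follows the same approach as the paper: check containment of the (Poisson-)generators of $Y_\mu/\hh Y_\mu$ in $\O(\Gr_\mu)$ via Lemma~\ref{le:GrmuMinors}, get injectivity for free from the PBW result, and establish surjectivity by matching Hilbert series under the loop grading, using pro-unipotence to identify $\O(\Gm/\Gm_\mu)$ with a symmetric algebra as a graded vector space. Your explicit induction on root height for the non-simple $E_\alpha$, $F_\alpha$ is precisely the justification for the paper's terser assertion that the simple-root generators Poisson-generate, and your Hilbert series bookkeeping matches the paper's (and is in fact laid out more carefully).
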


\begin{proof}
First note that $ Y_\mu / \hh Y_\mu $ is generated as a Poisson algebra by all $ E_i^{(s)}$, $A_i^{(s)}$, and those $ F_i^{(s)} $ for $ i > \langle \mu^*, \alpha_i \rangle $.  We note that Lemma \ref{le:GrmuMinors} shows that the image of these generators under $\phi $ land in the subalgebra $ \O(\Gr_\mu) $.

Since $ \O(\Gr_\mu) $ is a Poisson subalgebra of $ \O(\Gm) $, we see that $ \phi $ restricts to a map $ Y_\mu/hY_\mu \rightarrow \O(\Gm) $.  This map is injective, since it is
the restriction of an injective map.  Thus, we only need to show that
it is surjective, which we do by a dimension count.

Note that by Lemma \ref{le:LieStab} the isotropy Lie algebra of $t^{w_0\mu}$ in $\Gm$
is the finite dimensional nilpotent Lie algebra $$\bigoplus_{\al \in \Delta_+} \bigoplus_{i = 1}^{-\langle
w_0\mu,\al\rangle} t^{-i}\fg_\al. $$  As a $\C^*$-module, the
functions on the group are identical to those on the Lie algebra by
the unipotence of the stabilizer.
Thus, if we let $d(k)$ be the number of roots such that $\langle
w_0\mu,\al\rangle<  k$, the Hilbert series of the functions on the
stabilizer is \[\prod_{i=1}^\infty \frac{1}{(1-q^{i})^{d(i)}}.\]

The Hilbert series of
$\O(\Gm)^{\Gm_\mu}$ is the quotient of that of $\O(\Gm)$  by that of
functions on the stabilizer.  That is, it is \[\prod_{i=1}^\infty \frac{1}{(1-q^{i})^{\dim\fg-d(i)}}.\]

On the other hand, the PBW basis for the shifted Yangian gives us the same Hilbert series for $ Y_\mu $.
\end{proof}

Thus the shifted Yangian $ Y_\mu  $ gives a quantization of $ \Gr_\mu
$.
\begin{remark}
  We should note that it is this theorem that forces us to use the
  thick Grassmannian; it will fail if we take the analogue of $
  \Gr_\mu $ in the thin affine Grassmannian, since this has ``too
  many'' functions, and will correspond to a completion of $Y_\mu$.
\end{remark}

\subsection{Deformation of the Yangian}
\label{sec:bd-yangian}

We consider a deformation of the Yangian, which we think
of as related to the Beilinson-Drinfeld Grassmannian deforming the
affine Grassmannian.  We consider for each node $i$ in the Dynkin diagram
an infinite sequence of parameters $r^{(1)}_i,r^{(2)}_i,\dots \in \C[[h]] $ and their generating series
$r_i(u)=1+r_i^{(1)}u^{-1}+\cdots $.

Now consider the algebra $Y(\mathbf r)$ generated by the
coefficients of $E_i(u), F_i(u),A_i(u)$.  The
relations are as in the previous section, with the relation \eqref{EFseries} replaced by
\begin{equation}
(u-v)[E_i(u), F_i(v)] = \hh(r_i(u)H_i(u) - r_i(v)H_i(v)), \label{EFprime}
\end{equation}
and let $Y_\mu(\mathbf r) $ be the shifted analogue of this algebra.
$Y(\mathbf r) $ is actually isomorphic to the trivial deformation of the Yangian
via the map $H_i(u)\mapsto H_i(u)/r_i(u)$.

\section{Quantization of slices}
In order to quantize the slices $ \Grlmbar $, we will need to define a quotient of $ Y_\mu $ (and its deformations $ Y_\mu(\mathbf r)$).  To do this we will use the work of Gerasimov-Kharchev-Lebedev-Oblezin \cite{GKLO}.

\subsection{Change of Cartan generators}
It will be convenient for us to change the Cartan generators of $ Y $.  Following  \cite{GKLO}, we define $ A_i^{(s)} $ by the equation
\begin{equation} \label{eq:AfromH}
H_i(u) = \frac{\prod_{j \ne i} \prod_{p = 1}^{-a_{ji}} A_j(u -\frac{\hh}{2} (\alpha_i + p \alpha_j, \alpha_j)) }{A_i(u ) A_i(u -\frac{\hh}{2}(\alpha_i, \alpha_i)) }
\end{equation}
where $ A_i(u) = 1 + \sum_{s = 1}^\infty A_i^{(s)} u^{-s} $.

\begin{example}
In the $ G = SL_2 $ case, this gives $ H(u) = \frac{1}{A(u)A(u-\hh)}$ and so for example we have
\begin{align*}
H^{(1)} = -2A^{(1)}, \quad
H^{(2)} = 3{A^{(1)}}^2 - \hh A^{(1)} - 2A^{(2)}
\end{align*}
\end{example}

\begin{proposition}[\mbox{\cite[Lemma 2.1]{GKLO}}]
Equation \ref{eq:AfromH} uniquely determines all the $ A_i^{(s)} $. \qed
\end{proposition}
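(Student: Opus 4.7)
The plan is to clear denominators in (\ref{eq:AfromH}) and extract each $u^{-s}$-coefficient, reducing the task to a linear system whose coefficient matrix is the Cartan matrix of $\fg$.

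First, I would multiply both sides of (\ref{eq:AfromH}) by $A_i(u)\, A_i(u - \tfrac{\hh}{2}(\alpha_i,\alpha_i))$ to obtain the equivalent identity in $Y[[u^{-1}]]$:
\begin{equation*}
H_i(u)\, A_i(u)\, A_i\!\bigl(u - \tfrac{\hh}{2}(\alpha_i,\alpha_i)\bigr) \;=\; \prod_{j\ne i}\prod_{p=1}^{-a_{ji}} A_j\!\bigl(u - \tfrac{\hh}{2}(\alpha_i+p\alpha_j,\alpha_j)\bigr).
\end{equation*}
Each factor $A_j(u - c)$ is expanded by the binomial series as $1 + \sum_{s \geq 1} A_j^{(s)}(u-c)^{-s}$. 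The key combinatorial observation is that $[u^{-s}]\,A_j(u - c) = A_j^{(s)} + (\text{polynomial in } A_j^{(t)},\ t<s)$, and that any product of two or more non-constant factors contributes only polynomials in $\{A_k^{(t)} : t < s\}$.

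Extracting the coefficient of $u^{-s}$ for $s \geq 1$, the only $A^{(s)}$-linear contribution on the right comes from picking $A_j^{(s)}$ in a single factor and $1$ in all others; since for each $j \ne i$ there are exactly $-a_{ji}$ such factors of $A_j$, this contributes $\sum_{j\ne i}(-a_{ji})\, A_j^{(s)}$. On the left, the analogous contribution is $H_i^{(s)}$ (from the $u^{-s}$-coefficient of $H_i(u)$) plus $2\, A_i^{(s)}$ (one copy from each of the two $A_i$-factors, noting $a_{ii} = 2$). After rearrangement, this yields, for each $i \in I$,
\begin{equation*}
\sum_{j \in I} a_{ji}\, A_j^{(s)} \;=\; -H_i^{(s)} + P_i^{(s)},
\end{equation*}
where $P_i^{(s)}$ is an explicit expression in $\{A_k^{(t)} : t < s\}$ and the $H_i^{(r)}$ for $r<s$, involving no $A_k^{(s)}$.

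Since $\fg$ is semisimple, the Cartan matrix $(a_{ji})$ is a non-singular integer matrix and thus invertible over $\C[[\hh]]$. Hence the displayed system uniquely determines $\{A_j^{(s)}\}_{j \in I}$ in terms of the $H_i^{(s)}$ and the previously determined $A_k^{(t)}$ for $t < s$. Induction on $s$, with base case $A_i^{(0)} = 1$, finishes the argument (and simultaneously establishes existence). The only mild technical obstacle is the combinatorial verification that $P_i^{(s)}$ is free of $A_k^{(s)}$-terms: this is a straightforward but slightly tedious check using the binomial expansions of $(u-c)^{-s}$ and the expansion of the multi-factor products on each side.
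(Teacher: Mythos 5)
Your proof is correct. Note that the paper itself does not supply a proof of this proposition---it is stated with a bare \texttt{\textbackslash qed}, deferring to \cite[Lemma 2.1]{GKLO}---so there is no in-paper argument to compare against. Your approach (clear denominators, extract the $u^{-s}$-coefficient, observe that the only $\{A_k^{(s)}\}$-linear contributions come from the single factor in each product at top degree, and solve the resulting triangular-in-$s$ linear system whose $s$-th block has coefficient matrix $(a_{ji})$, invertible since $\fg$ is semisimple) is the natural one and is the same strategy used in the source. One small remark: the matrix you obtain is the transpose $(a_{ji})$ of the Cartan matrix, which of course has the same determinant, so invertibility is unaffected; and the "lower-order" terms $P_i^{(s)}$ also involve $H_i^{(r)}$ for $r<s$, which you correctly flag and which does not interfere with the induction.
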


One can think of the new generators $ A_i^{(s)} $ as being related to
the fundamental coweights of $ G $, whereas the $H_i^{(s)}$ match with
the simple coroots.  In particular, we have the following result which follows by setting $h = 0 $ in (\ref{eq:AfromH}).
\begin{proposition} \label{th:ImageAi}
 Let $ \phi : Y/hY \rightarrow \O(\Gm) $ be the isomorphism from Theorem \ref{th:YangianQuantize}.  Then $ \phi(A_i^{(s)}) = \Delta_{\om_i, \om_i}^{(s)} $.\qed
\end{proposition}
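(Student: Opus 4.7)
The plan is to apply the isomorphism $\phi$ from Theorem \ref{th:YangianQuantize} to the defining equation (\ref{eq:AfromH}), reduce modulo $h$, and extract $\phi(A_i^{(s)})$ using the uniqueness from GKLO Lemma 2.1 together with invertibility of the Cartan matrix.

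First I would verify that setting $h=0$ in (\ref{eq:AfromH}) collapses every shift $u-\tfrac{h}{2}(\cdots)$ to $u$. Since the inner product over $p$ runs through $-a_{ji}$ values and the denominator becomes $A_i(u)^2 = A_i(u)^{-a_{ii}}$, the equation simplifies in $Y/hY$ to
\[
H_i(u) = \prod_{j} A_j(u)^{-a_{ji}}.
\]

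Next I would apply $\phi$ and use Theorem \ref{th:YangianQuantize}, which gives $\phi(H_i(u)) = \prod_j \Delta_{\omega_j,\omega_j}(u)^{-a_{ji}}$. Combining these yields, for every $i\in I$, the identity
\[
\prod_{j} \phi(A_j(u))^{-a_{ji}} \;=\; \prod_{j} \Delta_{\omega_j,\omega_j}(u)^{-a_{ji}}
\]
in $1 + u^{-1}\O(\Gm)[[u^{-1}]]$.

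Finally I would conclude $\phi(A_j(u)) = \Delta_{\omega_j,\omega_j}(u)$ by uniqueness. One way is to take formal logarithms, converting the above into a linear system whose coefficient matrix is the Cartan matrix $(a_{ji})$; since $G$ is semisimple this matrix is invertible over $\mathbb{Q}$, forcing equality term by term. Equivalently, the recursion implicit in the cited GKLO Lemma is $h$-integral: reducing mod $h$ it inductively expresses each $A_i^{(s)}$ in terms of $H_j^{(s')}$ and the $A_k^{(<s)}$, so the $\phi(A_j(u))$ and $\Delta_{\omega_j,\omega_j}(u)$ -- which both satisfy the same recursion with the same input $\phi(H_i(u))$ -- must agree. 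Comparing coefficients of $u^{-s}$ then gives the desired equality $\phi(A_i^{(s)}) = \Delta_{\omega_i,\omega_i}^{(s)}$.

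There is essentially no obstacle here beyond bookkeeping; the only point to watch is confirming that the shifts in (\ref{eq:AfromH}) really vanish modulo $h$ and that the resulting multiplicative system is invertible, both of which are immediate from the statement of (\ref{eq:AfromH}) and the fact that we work in finite type.
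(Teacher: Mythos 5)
Your proof is correct and follows the same route the paper takes: the paper remarks that the proposition ``follows by setting $h=0$ in (\ref{eq:AfromH}),'' and you have simply spelled out the details — collapsing the shifts modulo $h$, applying $\phi$ and the $H_i(u)$ formula from Theorem \ref{th:YangianQuantize}, and then deducing uniqueness either via formal logarithms and invertibility of the Cartan matrix or by invoking the uniqueness in the cited GKLO lemma. Nothing is missing; this is the intended one-line argument made explicit.
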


\subsection{The GKLO representation}
\label{GKLOrep}
In this section, we describe certain representations via difference
operators of shifted Yangians, based on work of
Gerasimov-Kharchev-Lebedev-Oblezin \cite{GKLO}.  Fix an orientation of
the Dynkin diagram; we will write $ i \leftarrow j $ to denote arrows in this quiver.  This will replace the ordering on the simple roots in \cite{GKLO}.

Fix a dominant weight $\la$ such that $\mu\leq \la$ and let $m_i = \langle \lambda - \mu, \omega_{i^*} \rangle$ and let $ \lambda_i = \langle \lambda, \alpha_{i^*} \rangle $.

Define a $\C[[\hh]]$-algebra $ D_\mu^\lambda $, with generators $z_{i,k}, \beta_{i,k}, \beta_{i,k}^{-1} $, for $ i \in I $ and $ 1 \le k \le m_i $, and $ (z_{i,k} - z_{i,l})^{-1} $, and relations that all generators commute except that $ \beta_{i,k} z_{i,k} = (z_{i,k} + d_i h) \beta_{i,k} $.

This algebra $ D_\mu^\lambda $ is an algebra of $ \hh$-difference operators.


\begin{proposition}
The algebra $ D_\mu^\lambda  $ is a free $ \C[[h]]$-algebra and  we have an isomorphism of Poisson algebras
$$
D_\mu^\lambda / h D_\mu^\lambda \cong \C[z_{i,k}, (z_{i,k} - z_{i,l})^{-1} , \beta_{i,k}, \beta_{i,k}^{-1}]
$$
where the right hand side is given the Poisson structure defined by $ \{ \beta_{i,k}, z_{i,k} \} = d_i \beta_{i,k} $ and all other generators Poisson commute.
 \end{proposition}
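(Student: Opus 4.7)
The plan is to realize $D_\mu^\lambda$ as an iterated Ore extension and Ore localization of the commutative polynomial ring $\C[[h]][z_{i,k}]$. Since each such step preserves $\C[[h]]$-freeness, this gives the first statement immediately and a PBW-style basis that makes the second statement transparent.

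First I would start with $R_0 := \C[[h]][z_{i,k}]$, which is manifestly $\C[[h]]$-free, and localize at the multiplicatively closed set generated by the elements $z_{i,k} - z_{i,l}$ for $k\neq l$, which are non-zero divisors; call the result $R_1$. Next, for each pair $(i,k)$ define the $\C[[h]]$-algebra automorphism $\sigma_{i,k}$ of $R_1$ by $z_{i,k}\mapsto z_{i,k}+d_i h$, fixing the other $z_{j,l}$. One must check that $\sigma_{i,k}$ preserves the multiplicative set of localizing denominators; for denominators not involving $z_{i,k}$ this is clear, while denominators $z_{i,k}-z_{i,l}$ are sent to $z_{i,k}+d_i h-z_{i,l}$, which remain non-zero divisors in $R_1$ and hence units after further localizing if needed. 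The Ore extension $R_1[\beta_{i,k};\sigma_{i,k}]$ is then free as a left $R_1$-module on powers of $\beta_{i,k}$. The distinct $\sigma_{i,k}$ mutually commute on each other's targets, so I can iterate to adjoin all $\beta_{i,k}$ at once, obtaining a $\C[[h]]$-free algebra $R_2$ with an explicit PBW basis.

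Finally, each $\beta_{i,k}$ is a non-zero divisor in $R_2$ by the freeness, and conjugation by $\beta_{i,k}$ on $R_2$ agrees with $\sigma_{i,k}$ on $R_1$ and is trivial on the other $\beta_{j,l}$, so the Ore conditions are satisfied and the simultaneous localization inverting all $\beta_{i,k}$ produces $D_\mu^\lambda$. The inverted algebra is still $\C[[h]]$-free, with a PBW basis given by ordered monomials in $z_{i,k}$, $(z_{i,k}-z_{i,l})^{-1}$, and $\beta_{i,k}^{\pm 1}$.

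For the $h=0$ statement, setting $h=0$ in the defining relation yields $\beta_{i,k}z_{i,k}=z_{i,k}\beta_{i,k}$, so $D_\mu^\lambda/hD_\mu^\lambda$ is commutative; combined with the PBW basis above it is isomorphic to the Laurent polynomial/localization algebra stated. The induced Poisson bracket is $\{\overline{a},\overline{b}\}:=\tfrac{1}{h}[a,b]\bmod h$; from $[\beta_{i,k},z_{i,k}]=d_i h\,\beta_{i,k}$ one reads off $\{\beta_{i,k},z_{i,k}\}=d_i\beta_{i,k}$, while the remaining pairs of lifts commute exactly and thus Poisson commute. The only genuinely delicate point is the verification that $\sigma_{i,k}$ extends to the localization $R_1$ and that the iterated Ore/localization data are mutually compatible; once this combinatorics of denominators is checked, everything else is the standard machinery of Ore extensions over $\C[[h]]$.
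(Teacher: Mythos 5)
Your route via iterated Ore extensions and Ore localizations is genuinely different from the paper's, which applies Bergman's diamond lemma directly to exhibit an explicit PBW basis
\[h^p\cdot \prod\beta_{i,k}^{\pm a_{i,k}}\cdot \prod z_{j,k}^{b_{j,k}}\cdot
\prod_{k<\ell} (z_{i,k}-z_{i,\ell})^{e_{i,k,\ell}}\]
(subject to a combinatorial restriction on which $z_{j,k}$ may appear) and then reads off freeness and the $h=0$ identification from that basis. If the Ore construction goes through it is cleaner, since the PBW property is automatic for Ore extensions; the diamond-lemma argument and the Ore-extension argument are of course two dialects of the same underlying normal-form computation.

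However, the point you describe as ``the only genuinely delicate point'' is in fact a real gap, and it is more than a routine check. The automorphism $\sigma_{i,k}\colon z_{i,k}\mapsto z_{i,k}+d_ih$ sends $z_{i,k}-z_{i,l}$ to $z_{i,k}-z_{i,l}+d_ih$, which is \emph{not} invertible in $R_1=\C[[h]][z_{i,k},(z_{i,k}-z_{i,l})^{-1}]$: that ring is not $h$-adically complete, and $1+d_ih(z_{i,k}-z_{i,l})^{-1}$ has no inverse there. So $\sigma_{i,k}$ does not descend to $R_1$ and the Ore extension as you set it up is not defined. The phrase ``units after further localizing if needed'' is where the argument is actually incomplete: to make $\sigma_{i,k}$ well-defined you must localize at the $\sigma$-stable multiplicative set generated by all $z_{i,k}-z_{i,l}+nd_ih$, $n\in\Z$, and then the algebra you build is this larger localization rather than the ring literally presented in the statement. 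This is partly a defect of the statement itself, not only of your argument: taken literally, requiring $(z_{i,k}-z_{i,l})^{-1}$ to be an inverse that commutes with $\beta_{i,k}$ forces $d_ih\,(z_{i,k}-z_{i,l})^{-1}=0$, so some enlargement of the denominator set (or an $h$-adic completion) is implicitly intended. You should make this explicit: localize at the $\sigma$-stable set of all $d_ih$-shifted differences, then run the Ore-extension and Ore-localization argument as you describe. Since every such shifted difference is congruent to $z_{i,k}-z_{i,l}$ modulo $h$, the description of $D_\mu^\lambda/hD_\mu^\lambda$ and its Poisson bracket is unaffected, and your $h=0$ analysis at the end is correct as written.
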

 \begin{proof}
Obviously, we have a map  \[\C[z_{i,k}, (z_{i,k} -
z_{i,l})^{-1} , \beta_{i,k}, \beta_{i,k}^{-1}] \to
D_\mu^\lambda / h D_\mu^\lambda  \] by observing that
$D_\mu^\lambda / h D_\mu^\lambda  $ is commutative.
From the Bergman diamond lemma, we see that the algebra $ D_\mu^\lambda$ has a PBW basis consisting of
\[h^p\cdot \prod\beta_{i,k}^{\pm a_{i,k}}\cdot \prod z_{j,k}^{b_{j,k}}\cdot
\prod_{k<\ell} (z_{i,k}-z_{i,\ell})^{e_{i,k,\ell}}\] subject to
restriction that if $b_{j,k}\neq 0$, then $k$ must be maximal in its
equivalence class for the relation given by the transitive closure of the binary relation $k\sim \ell$ if $e_{j,k,\ell}\neq
0$.  Freeness over $ \C[[h]] $ follows immediately and since the same monomials give a basis of $\C[[h]][z_{i,k}, (z_{i,k} -
z_{i,l})^{-1} , \beta_{i,k}, \beta_{i,k}^{-1}]$, this confirms that we
have the desired isomorphism.  The Poisson bracket calculation follows
immediately from the relations.
 \end{proof}

Fix some complex numbers $ c_i^{(r)} $ for $ i \in I $, $ 1 \le r \le \lambda_i $.
For any variable $x$, consider the monic degree $ \lambda_i $ polynomial whose coefficients are the numbers $ c_i^{(r)} $,
$C_i(x) = x^{\lambda_i} + c_i^{(1)} x^{\lambda_i - 1} + \dots + c_i^{(\lambda_i)}.$  Note that $ x^{-\lambda_i} C_i(x) = 1 + c_i^{(1)} x^{- 1} + \dots + c_i^{(\lambda_i)}x^{-\lambda_i}$. We also introduce polynomials $Z_i(x)=\prod_{k=1}^{m_i}(x-z_{i,k})$ and  $Z_{i,k}(x)=\prod_{\ell\neq k}(x-z_{i,\ell})$.  Let $\mu_i=\left<\mu,\alpha_{i^*} \right>$ and set $F_{\mu,i}(u)=\sum_{s=1}^{\infty}F_i^{(s+\mu_i)}u^{-s}$.    Finally, for any $ \mathbf c $ as above, define $ \mathbf r $ by
\begin{equation} \label{eq:rfromc}
r_i(u) = u^{-\lambda_i} C_i(u) \frac{ \prod_{j \ne i} \prod_{p=1}^{-a_{ji}} (1- u^{-1}( \hh d_i\frac{a_{ij}}{2} + hd_jp))^{m_j}}{(1-\hh d_{i} u^{-1})^{m_i}}
\end{equation}
We are now ready to define the \textbf{GKLO representation}:
\begin{theorem}
There is a map of $ \C[[\hh]] $-algebras, $\Psi_\mu^\lambda: Y_\mu(\mathbf r) \rightarrow D_\mu^\lambda $ defined by:
  \begin{align*}
A_i(u) &\mapsto u^{-m_i}Z_i(u)\\
E_i(u) &\mapsto
d_i^{-1/2} \sum_{k=1}^{m_i}\frac{\displaystyle \prod_{j \rightarrow i}
  \prod_{p=1}^{-a_{ji}}Z_j(z_{i,k}-\hh d_i
  \frac{a_{ij}}{2}-hd_jp) }{\displaystyle  (u-z_{i,k})Z_{i,k}(z_{i,k})  }\beta^{-1}_{i,k}
  \end{align*}

And, $F_{\mu,i}(u)$ maps to
$$
-d_i^{-1/2}  \sum_{k=1}^{m_i}  C_i(z_{i,k} + hd_{i})
 \frac{\prod_{j \leftarrow i}
  \prod_{p=1}^{-a_{ji}} Z_j(z_{i,k}-\hh d_i(\frac{a_{ij}}{2}
  -1)-hd_jp)}{(u-z_{i,k} - hd_{i})Z_{i,k}(z_{i,k})  }\beta_{i,k}
 $$

\end{theorem}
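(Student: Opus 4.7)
The plan is to verify that the formulas for $\Psi_\mu^\lambda(A_i(u))$, $\Psi_\mu^\lambda(E_i(u))$, and $\Psi_\mu^\lambda(F_{\mu,i}(u))$ respect every defining relation of $Y_\mu(\mathbf r)$, i.e.\ the relations of Theorem \ref{OurYPresentation} together with the deformed commutator \eqref{EFprime}, restricted to the shifted subalgebra. Proceeding in increasing order of difficulty, the Cartan-Cartan relations $[H_i^{(r)}, H_j^{(s)}]=0$ are immediate since each $A_i(u)$ maps to a polynomial in the mutually commuting variables $z_{j,k}$ and $H_i(u)$ is then a rational function of these via \eqref{eq:AfromH}. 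The low-weight relations such as $[H_i^{(1)}, E_j^{(s)}] = \hh(\alpha_i, \alpha_j) E_j^{(s)}$ follow from a degree/residue count, while the full Cartan-$E$ and Cartan-$F$ generating series identities reduce, after clearing denominators, to the commutation rule $\beta_{i,k} z_{i,k} = (z_{i,k}+d_i\hh)\beta_{i,k}$ applied to rational functions of $u$ and $v$.

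The commutator relation \eqref{EFprime} is the key place where the deformation parameters $\mathbf c$ enter. The plan is to compute $[\Psi_\mu^\lambda(E_i(u)), \Psi_\mu^\lambda(F_{\mu,j}(v))]$ term by term: distinct $\beta_{i,k}^{-1}, \beta_{j,\ell}$ commute, so only diagonal terms with $i=j$ and $k=\ell$ contribute. Extracting the common factor $\frac{\hh}{u-v}$ from the two simple poles at $u=z_{i,k}$ and $v=z_{i,k}+\hh d_i$ yields the difference $\frac{\hh}{u-v}(\text{residue at } u - \text{residue at }v)$. Matching this with the right-hand side of \eqref{EFprime} and using \eqref{eq:AfromH} to compute $\Psi_\mu^\lambda(H_i(u))$ forces the product $r_i(u)H_i(u)$ to be a polynomial multiple of $u^{-\lambda_i}C_i(u)$, and unwinding the shifts built into the definition of $A_j$ and $F_{\mu,i}$ produces exactly \eqref{eq:rfromc}. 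This is the computation that tells us which $\mathbf r$ corresponds to which $\mathbf c$.

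The main obstacle is the quadratic relations among the $E$'s (and their $F$-analogues) together with the Serre relations. For the quadratic relation
\begin{equation*}
[E_i^{(r+1)}, E_j^{(s)}] - [E_i^{(r)}, E_j^{(s+1)}] = \tfrac{\hh(\alpha_i,\alpha_j)}{2}\bigl(E_i^{(r)}E_j^{(s)} + E_j^{(s)}E_i^{(r)}\bigr),
\end{equation*}
I would substitute the formulas and normal-order using $\beta z = (z+d\hh)\beta$; both sides become sums over pairs of indices $(k,\ell)$ of rational functions in $z$'s, and the identity reduces to matching residues at $z_{i,k}=z_{j,\ell}+\tfrac{\hh}{2}(\alpha_i+p\alpha_j,\alpha_j)$. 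The chosen quiver orientation $i\leftarrow j$ breaks the apparent $i\leftrightarrow j$ asymmetry in the formula for $E_i(u)$, but the asymmetry is exactly what is needed so that the anticommutator on the right-hand side emerges after antisymmetrizing the left-hand side. This is essentially the core computation of \cite{GKLO}; adapting it requires only replacing type-$A$ specific combinatorics by the Cartan matrix entries $a_{ij}$ and normalizations $d_i$. The Serre relations then follow from the quadratic relations combined with the classical Serre identity for $\fg$ applied to the associated-graded picture (setting $\hh=0$, where the images of $E_i^{(1)}$ assemble into a classical principal-value expression).

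Finally, consistency with the convolution definitions $E_\alpha^{(r)} = \hh^{-1}[E_{\hat\alpha}^{(r)}, E_{\check\alpha}^{(1)}]$ and its $F$-analogue is automatic given that all simple-root relations are satisfied, because these equations define the non-simple generators. The $F_{\mu,i}$-analogues of the quadratic and Serre relations use the same reasoning with the opposite orientation $i\rightarrow j$ appearing in the $F$-formula, and the shift by $\mu_i$ ensures the extra polynomial factor $C_i(z_{i,k}+\hh d_i)$ interacts correctly with the residue cancellations.
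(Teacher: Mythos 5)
Your plan is a genuinely different route from the paper's proof. The paper does not re-derive the GKLO relations at all: it cites \cite[Theorem 3.1]{GKLO} for the case $\mu=0$, observes that the same verifications of the Cartan--Cartan, Cartan--$E$, Cartan--$F$, $E$--$E$/$F$--$F$ quadratic, and Serre relations carry over verbatim for the shifted formulas (since none of these relations involve the truncated series $F_{\mu,i}$), and then devotes the entire argument to the single new relation, the shifted $E$--$F$ commutator. That computation hinges on the observation that $J_{\mu,i}(u)$ is a \emph{truncation} of $r_i(u)H_i(u)$, after which a partial-fractions expansion of $\frac{\hh}{Z_i(u)Z_i(u-\hh d_i)}$ and the residue identity $\frac{p(u)}{u-z}\bigl|_{u^{-r}}=\frac{p(z)}{u-z}\bigl|_{u^{-r}}$ match the two sides coefficient by coefficient. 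Your approach, which re-verifies all relations from scratch, is potentially valid but substantially heavier, and sketches exactly the part (the shifted $E$--$F$ relation, and in particular the role of the $\mu_i$-truncation) where the new content of the theorem lies.

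There is also a genuine gap in your treatment of the Serre relations. You argue that they ``follow from the quadratic relations combined with the classical Serre identity for $\fg$ applied to the associated-graded picture (setting $\hh=0$).'' Verifying a relation in $D_\mu^\lambda$ after reduction mod $\hh$ only shows that the would-be relation lies in $\hh D_\mu^\lambda$; since $D_\mu^\lambda$ is a free $\C[[\hh]]$-module, this is far from showing it vanishes. Nor is it true in this setting that the quadratic relations plus the degree-zero (classical) Serre relation automatically imply the full Yangian Serre relations for an arbitrary assignment of the generators: that implication is a statement about the presentation of the Yangian itself (a Levendorskii-type result), not a free pass for verifying that concrete difference operators satisfy the relations. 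The Serre relations have to be checked honestly inside $D_\mu^\lambda$, which is exactly what GKLO do and what the paper inherits by citation. As written, your argument does not establish them.
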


\begin{proof}
When $\mu=0$ this is a reformulation of \cite[Theorem 3.1.(i)]{GKLO}.
Suppose then that $\mu\neq0$.  Then the proof of \cite[Theorem 3.1]{GKLO} applies to all the relations in $Y_\mu $ except for the commutator relation between $E_i(u)$ and $F_{\mu,i}(v)$.

In the shifted Yangian this relation takes the form
\begin{equation}
\label{RelI}
(u-v)[E_i(u),F_{\mu,i}(v)]=h(J_{\mu,i}(v)-J_{\mu,i}(u))
\end{equation}
where $J_i(v)=r_i(v)H_i(v)=\sum_{p=0}^{\infty}J_i^{(p)}v^{-p}
$
and
$$
J_{\mu,i}(v)=\sum_{p=1}^{\infty}J_i^{(p+\mu_i)}v^{-p}.
$$
To express the left hand side of (\ref{RelI}) we set:
\begin{eqnarray*}
L_i(v)&=&\frac{C_i(z_{i,k}+hd_i)\prod_{j\neq i}\prod_{p=1}^{-a_{ji}}Z_j(z_{i,k}-hd_i(\frac{a_{ij}}{2}-1)-hd_jp)}{Z_{i,k}(z_{i,k}+hd_i)Z_{i,k}(z_{i,k})(v-z_{i,k}-hd_i)}
\\
R_i(v)&=&\frac{C_i(z_{i,k})\prod_{j \neq i}\prod_{p=1}^{-a_{ji}}Z_j(z_{i,k}-hd_i\frac{a_{ij}}{2}-hd_jp))}{Z_{i,k}(z_{i,k}-hd_i)Z_{i,k}(z_{i,k})(v-z_{i,k})}
 \end{eqnarray*}
Then the left hand side of (\ref{RelI}) is equal to
$$
d_i^{-1}\sum_{k=1}^{m_i} (L_i(v)-R_i(v))-(L_i(u)-R_i(u))
$$
Note that we expressed this sum as a ``$v$-part'' minus a ``$u$-part''.

Now we consider the right hand side of (\ref{RelI}).  Note that $$\lambda_i=\mu_i+2m_i+\sum_{j \leftrightarrow i}a_{ji}m_j$$  Therefore,
$$
r_i(u) = u^{-\mu_i}\frac{C_i(u)\prod_{j\neq i}\prod_{p=1}^{-a_{ji}}(u- \hh d_i\frac{a_{ij}}{2} - hd_jp)^{m_j}}{u^{m_i}(u-hd_{i})^{m_i}}
$$
Now
$$
H_i(u) \mapsto \frac{u^{m_i}(u-hd_{i})^{m_i}}{\prod_{j\neq i}\prod_{p=1}^{-a_{ji}}(u-hd_i\frac{a_{ij}}{2}-hd_jp)^{m_j}} \frac{\prod_{j \neq i}\prod_{p=1}^{-a_{ji}}Z_j(u-hd_i\frac{a_{ij}}{2}-hd_jp)}{Z_i(u)Z_i(u-hd_i)}
$$
and hence
$$
r_i(u)H_i(u) \mapsto u^{-\mu_i}C_i(u)\frac{\prod_{j \neq i}\prod_{p=1}^{-a_{ji}}Z_j(u-hd_i\frac{a_{ij}}{2}-hd_jp)}{Z_i(u)Z_i(u-hd_i)}
$$
Therefore
$$
C_i(u)\frac{\prod_{j \neq i}\prod_{p=1}^{-a_{ji}}Z_j(u-hd_i\frac{a_{ij}}{2}-hd_jp)}{Z_i(u)Z_i(u-hd_i)}=\sum_{p=0}^{\infty}J_i^{(p)}u^{\mu_i-p}
$$
On the other hand
$$
J_{\mu,i}(u)=\sum_{p=\mu_i+1}^{\infty}J_i^{(p)}u^{\mu_i-p}
$$
showing that $J_{\mu,i}(u)$ is a truncation of $C_i(u)\frac{\prod_{j \neq i}\prod_{p=1}^{-a_{ji}}Z_j(u-hd_i\frac{a_{ij}}{2}-hd_jp)}{Z_i(u)Z_i(u-hd_i)}$.  More precisely, for $r=1,2,...$
\[
hJ_{\mu,i}(u)\Big\vert_{u^{-r}}=hC_i(u)\frac{\prod_{j \neq i}\prod_{p=1}^{-a_{ji}}Z_j(u-hd_i\frac{a_{ij}}{2}-hd_jp)}{Z_i(u)Z_i(u-hd_i)}\Big\vert_{u^{-r}}
\]
Using partial fractions we have that $\frac{h}{Z_i(u)Z_i(u-hd_{i})}$ equals
$$
\sum_{k=1}^{m_i}\frac{1}{Z_{ik}(z_{ik})Z_{ik}(z_{ik}+hd_{i})(u-z_{ik}-hd_{i})}-\frac{1}{Z_{ik}(z_{ik})Z_{ik}(z_{ik}-hd_{i})(u-z_{ik})}
$$
Therefore for $r=1,2,...$ the $u^{-r}$-coefficient of $hJ_{\mu,i}(u)$ is equal to the $u^{-r}$-coefficient of
\begin{eqnarray*}
\sum_{k=1}^{m_i}\frac{C_i(u)\prod_{j \neq i}\prod_{p=1}^{-a_{ji}}Z_j(u-hd_i\frac{a_{ij}}{2}-hd_jp)}{Z_{ik}(z_{ik})Z_{ik}(z_{ik}+hd_{i})(u-z_{ik}-hd_{i})}&-&\\\frac{C_i(u)\prod_{j \neq i}\prod_{p=1}^{-a_{ji}}Z_j(u-hd_i\frac{a_{ij}}{2}-hd_jp)}{Z_{ik}(z_{ik})Z_{ik}(z_{ik}-hd_{i})(u-z_{ik})}
\end{eqnarray*}
Now observe that for any polynomial $p(u)$ and for $r=1,2,...$
$$
\frac{p(u)}{u-z}\Big|_{u^{-r}} = \frac{p(z)}{u-z}\Big|_{u^{-r}}
$$
Therefore for $r=1,2,...$ the $u^{-r}$-coefficient of $hu^{\mu_i}J_{\mu,i}(u)$ is equal to the $u^{-r}$-coefficient of

\begin{eqnarray*}
\sum_{k=1}^{m_i}L_i(u)-R_i(u)\end{eqnarray*}
proving (\ref{RelI}).
\end{proof}

\begin{example}
\label{ExampleA1}
  If $\fg=\mathfrak{sl}_2$ and $\la=\alpha^\vee, \mu=0$, then the formulas above
  simplify considerably.  In this case,
\[
  A(u) \mapsto 1-z u^{-1}  \qquad
  E(u) \mapsto \frac{1}{u-z}\beta^{-1}
\]
and $F(u) \mapsto -((z+\hh)^2 + c^{(1)} (z+\hh) + c^{(2)})\frac{1 }{u-z-h}\beta$.
  In particular,
\begin{equation*}
H^{(1)} \mapsto 2z   \qquad E^{(1)} \mapsto \beta^{-1} \qquad F^{(1)} \mapsto -((z+h)^2 + c^{(1)} (z+h) + c^{(2)}) \beta
\end{equation*}
Restrict this representation to the copy of $ \mathfrak{sl}_2 $ generated by $E^{(1)},H^{(1)}+c^{(1)}+h, F^{(1)} $, and consider these as difference operators acting on the polynomial ring $ \C[z] $.  (More precisely, these act on $\C[[h]][z]$, but one can specialize $h$ to $1$.)  This is a standard Whittaker module for $ \mathfrak{sl}_2 $ with generic nilpotent character.
\end{example}

\begin{remark}
We can define a $\Z$-grading on $ D_\mu^\lambda $ by setting
$$
\deg \hh = 1, \ \deg z_{i,k} = 1, \ \deg \beta_{i,k} = m_i + \sum_{i \rightarrow j} a_{ij}m_j + \lambda_i - \mu_i
$$
With this definition, the GKLO representation preserves grading.
\end{remark}

\subsection{Quantization of the slices \texorpdfstring{$\Grlmbar$}{Gr}}
For any $ \mathbf c $ as above, let
 $Y_\mu^\lambda(\mathbf c) $ be the image of $ Y_\mu(\mathbf r) $ in $ D_\mu^\la $ under the GKLO representation $ \Psi_\mu^\la$ and let $I_\mu^\la(\mathbf c) $ denote the kernel of $ \Psi_\mu^\la$ (here $ \mathbf r $ is determined from $ \mathbf c $ by (\ref{eq:rfromc})).

Note that $ Y^\lambda_\mu(\mathbf c) $ is free as a $\C[[\hh]] $-algebra since it is a subalgebra of $  D_\mu^\lambda $, a free $ \C[[\hh]]$-algebra.

We have the isomorphism $ Y_\mu(\mathbf c) \rightarrow Y_\mu $ from section \ref{sec:bd-yangian} and thus we get an isomorphism of Poisson algebras $ Y_\mu(\mathbf c) / h Y_\mu(\mathbf c) \rightarrow \O(\Gr_\mu) $ from Theorem \ref{th:shiftedYangianQuantize}.  On the other hand, because $ Y^\lambda_\mu(\mathbf c) $ is free as a $\C[[\hh]]$-algebra, we get a surjection of Poisson algebras $ Y_\mu(\mathbf c)/ hY_\mu(\mathbf c) \rightarrow Y_\mu^\lambda(\mathbf c)/ h Y_\mu^\la(\mathbf c) $.

We will now establish the following theorem which shows that $ Y^\la_\mu $ is a quantization of scheme supported on $ \Grlmbar$.
\begin{theorem}\label{Ylm-quant}
There is a surjective map of Poisson algebras $ Y^\la_\mu(\mathbf c) / \hh  Y^\la_\mu(\mathbf c) \rightarrow \O(\Gr^{\bar \la}_\mu)$ which is an isomorphism modulo the nilradical of the left hand side.
\end{theorem}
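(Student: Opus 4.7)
My plan is to construct the surjection via the classical limit of GKLO and identify its image inside $D^\lambda_\mu/\hh D^\lambda_\mu$ using a transcendence-degree computation. Composing the isomorphisms $Y_\mu(\mathbf c)/\hh \cong Y_\mu/\hh \cong \O(\Gr_\mu)$ (from Section \ref{sec:bd-yangian} and Theorem \ref{th:shiftedYangianQuantize}) with the defining surjection $Y_\mu(\mathbf c)\twoheadrightarrow Y^\lambda_\mu(\mathbf c)$ yields a Poisson surjection $\pi\colon \O(\Gr_\mu)\twoheadrightarrow Y^\lambda_\mu(\mathbf c)/\hh Y^\lambda_\mu(\mathbf c)$; let $K = \ker\pi$, a Poisson ideal.

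I next show $K\supseteq J_\mu^\lambda$. Since GKLO sends $A_i(u)\mapsto u^{-m_i}Z_i(u)$, a polynomial of degree $m_i$ in $u^{-1}$, the coefficients $A_i^{(s)}$ vanish in $D^\lambda_\mu$ for every $s>m_i$. By Proposition \ref{th:ImageAi}, combined with an induction that absorbs the twist coming from the classical deformation factor $r_i(u)|_{\hh=0}=u^{-\lambda_i}C_i(u)$, this places $\Delta^{(s)}_{\om_i,\om_i}\in K$ for all $s>m_i$. Because $K$ is a Poisson ideal, it contains the Poisson ideal $J_\mu^\lambda$ these elements generate, and Proposition \ref{pr:GenJmulam} then shows the support of $\mathrm{Spec}(Y^\lambda_\mu(\mathbf c)/\hh Y^\lambda_\mu(\mathbf c))$ lies inside $\Grlmbar$.

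To see this support is all of $\Grlmbar$, I consider the image $\bar Y$ of $Y^\lambda_\mu(\mathbf c)/\hh$ inside $D^\lambda_\mu/\hh D^\lambda_\mu$. Since the latter is a localization of a polynomial ring, hence a reduced integral domain, so is $\bar Y$. The chain $\O(\Gr_\mu)/J_\mu^\lambda\twoheadrightarrow Y^\lambda_\mu(\mathbf c)/\hh\twoheadrightarrow \bar Y$ then factors through the reduction, producing a surjection of domains $\O(\Grlmbar)\twoheadrightarrow \bar Y$. As $\Grlmbar$ is irreducible of dimension $2\sum_i m_i=2\langle\rho,\lambda-\mu\rangle$, this surjection is an isomorphism as soon as $\mathrm{tr.deg}_{\C}\bar Y\geq 2\sum m_i$.

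For this transcendence-degree bound I argue that $D^\lambda_\mu/\hh$ is algebraic over $\bar Y$: each $z_{i,k}$ is a root of $Z_i(u)=\prod_l(u-z_{i,l})$, whose coefficients lie in $\bar Y$ as the classical images of $A_i^{(1)},\ldots,A_i^{(m_i)}$; and each $\beta_{i,k}^{-1}$ is recovered, up to a nonzero rational factor in the $z_{j,l}$'s, as the residue at $u=z_{i,k}$ of the image of $E_i(u)$. This yields $\O(\Grlmbar)\cong \bar Y$; sandwiching $\mathrm{Spec}(\bar Y)\subseteq \mathrm{Spec}(Y^\lambda_\mu(\mathbf c)/\hh)\subseteq \Grlmbar$ forces all three to share $\Grlmbar$ as underlying variety, so the composite $Y^\lambda_\mu(\mathbf c)/\hh Y^\lambda_\mu(\mathbf c)\twoheadrightarrow \bar Y\cong \O(\Grlmbar)$ is the reduction map, with kernel exactly the nilradical. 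The main obstacle is this algebraic-independence argument, which depends on checking that the rational coefficients $Z_j(z_{i,k})^{-a_{ji}}/Z_{i,k}(z_{i,k})$ appearing in the classical GKLO image of $E_i(u)$ are nonzero in the relevant localization of $\C[z_{i,k},\beta_{i,k}^{\pm 1}]$.
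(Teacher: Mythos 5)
Your proof is correct but takes a genuinely different route from the paper's at the decisive step: showing that the support of $Y^\lambda_\mu(\mathbf c)/\hh Y^\lambda_\mu(\mathbf c)$ is not strictly smaller than $\Grlmbar$. Both proofs begin identically: since $\Psi^\lambda_\mu(A_i^{(s)}) = 0$ for $s > m_i$ and $A_i^{(s)}$ corresponds to $\Delta^{(s)}_{\omega_i,\omega_i}$, the kernel ideal $K$ contains the Poisson ideal $J^\lambda_\mu$, and by Proposition \ref{pr:GenJmulam} the support lies in $\Grlmbar$. For the reverse containment, the paper argues by contradiction with a single explicit element: if the support were a strictly smaller union of Poisson subvarieties $\Gr^{\bar\nu_j}_\mu$, some power $\bigl(\prod_i \Delta^{(m_i)}_{\omega_i,\omega_i}\bigr)^k$ would lie in $K$; but its classical GKLO image is, up to sign, the nonzero monomial $\bigl(\prod_{i,k}z_{i,k}\bigr)^k$, a contradiction. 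You instead bound the transcendence degree of $\bar Y$, the image of $Y^\lambda_\mu(\mathbf c)/\hh$ in $D^\lambda_\mu/\hh D^\lambda_\mu$: the $z_{i,k}$ are roots of $Z_i(u)$, whose coefficients are the classical images of $A_i^{(1)},\dots,A_i^{(m_i)} \in \bar Y$, and inverting the Vandermonde system underlying the partial-fraction form of the classical GKLO image of $E_i(u)$ recovers each $\beta_{i,k}^{-1}$ up to a nonzero rational function of the $z$'s (nonzero because $D^\lambda_\mu/\hh D^\lambda_\mu$ is a domain, and with Vandermonde determinant $\prod_{k<\ell}(z_{i,\ell}-z_{i,k})$ a unit in that localization). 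Irreducibility of $\Grlmbar$ --- not invoked explicitly in the paper's proof, but valid since $\Gr^\lambda_\mu$ is a connected dense symplectic leaf --- then forces the surjection of domains $\O(\Grlmbar)\twoheadrightarrow\bar Y$ to be an isomorphism by the dimension count $\operatorname{tr.deg}\bar Y = 2\sum_i m_i = \dim\Grlmbar$. The paper's one-element argument is leaner and stays entirely inside the Poisson-geometric stratification; your dimension count is a touch heavier but buys a more transparent identification $\bar Y \cong \O(\Grlmbar)$, which makes the "modulo nilradical" conclusion immediate. Both are valid.
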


\begin{remark}
Consider the map $$ Y^\la_\mu(\mathbf c) / \hh  Y^\la_\mu(\mathbf c) \rightarrow \C[z_{i,k}, (z_{i,k} - z_{i,l})^{-1} , \beta_{i,k}, \beta_{i,k}^{-1}] $$ obtained by reducing the GKLO representation mod $\hh$.  If we knew that this map was injective, then we would know that $ Y^\la_\mu(\mathbf c) / \hh  Y^\la_\mu(\mathbf c) $ was reduced and that the map from Theorem \ref{Ylm-quant} was an isomorphism.  We will in fact make a stronger conjecture.
\end{remark}

If Conjecture \ref{co:main2} holds, then we can strengthen Theorem \ref{Ylm-quant} as follows.
\begin{theorem} \label{Ylm-quant2}
If Conjecture \ref{co:main2} holds then
\begin{enumerate}
\item There is an isomorphism of Poisson algebras $Y^\la_\mu(\mathbf c) / \hh  Y^\la_\mu(\mathbf c) \rightarrow \O(\Gr^{\bar \la}_\mu)$.
\item $ Y^\la_\mu(\mathbf c) $ is the quotient of $ Y_\mu(\mathbf c)$ by the 2-sided ideal generated by $ A^{(s)}_i $ for $ s > m_i, i \in I $.
\end{enumerate}
\end{theorem}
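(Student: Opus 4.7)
The plan is to prove (1) first via a Poisson-ideal argument, then derive (2) by a graded-Nakayama argument building on (1).

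For (1), let $\pi\colon \O(\Gr_\mu)\twoheadrightarrow Y^\la_\mu(\mathbf c)/\hh Y^\la_\mu(\mathbf c)$ be the natural surjection obtained from $Y_\mu(\mathbf c)\twoheadrightarrow Y^\la_\mu(\mathbf c)$ by reducing modulo $\hh$ and applying Theorem~\ref{th:shiftedYangianQuantize}. Because $\pi$ is the $\hh$-reduction of a $\C[[\hh]]$-algebra homomorphism, $N:=\ker\pi$ is a Poisson ideal of $\O(\Gr_\mu)$. The GKLO formula $A_i(u)\mapsto u^{-m_i}Z_i(u)$ kills $A_i^{(s)}$ for $s>m_i$, so by Theorem~\ref{th:ImageAi} we have $\Delta_{\omega_i,\omega_i}^{(s)}\in N$ for all such $s$; being Poisson-closed, $N$ contains the Poisson ideal $J_\mu^\la$. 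On the other hand, Theorem~\ref{Ylm-quant} provides a surjection $Y^\la_\mu/\hh Y^\la_\mu\twoheadrightarrow \O(\Gr^{\bar\la}_\mu)$ with nilpotent kernel, whose composite with $\pi$ is the scheme-theoretic restriction map of kernel $I(\Gr^{\bar\la}_\mu)$, giving $N\subseteq I(\Gr^{\bar\la}_\mu)$. Under Conjecture~\ref{co:main2}, $I(\Gr^{\bar\la}_\mu)=J_\mu^\la$, which squeezes $N=J_\mu^\la=I(\Gr^{\bar\la}_\mu)$ and yields the isomorphism of part (1).

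For (2), the inclusion $K\subseteq \ker\Psi_\mu^\la$ is immediate from the GKLO formula, producing a graded surjection $Y_\mu(\mathbf c)/K\twoheadrightarrow Y^\la_\mu(\mathbf c)$ of $\C[[\hh]]$-algebras. Both sides are nonnegatively graded with $\deg\hh>0$ and finite-dimensional homogeneous components (the Kazhdan-type grading on $Y_\mu$, restricted to $Y^\la_\mu$ via the grading on $D_\mu^\la$ from the remark after the GKLO representation), so a graded-Nakayama argument reduces injectivity to verifying an isomorphism after reducing modulo $\hh$. The naive reduction $(Y_\mu/K)/\hh\cong \O(\Gr_\mu)/\bar K$ uses only the ordinary ideal $\bar K$ generated by the $\Delta_{\omega_i,\omega_i}^{(s)}$ with $s>m_i$, which can be strictly smaller than the Poisson ideal $J_\mu^\la$. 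However, each commutator $[A_i^{(s)},b]=\hh c$ lies in $K$, forcing $c$ to be $\hh$-torsion in $Y_\mu(\mathbf c)/K$ and hence zero in the $\hh$-torsion-free quotient $Y^\la_\mu(\mathbf c)\subset D_\mu^\la$. The mod-$\hh$ image of $c$ is precisely the Poisson bracket $\{\Delta_{\omega_i,\omega_i}^{(s)},\bar b\}$, so killing the $\hh$-torsion promotes $\bar K$ to its Poisson closure $J_\mu^\la$, which by Conjecture~\ref{co:main2} coincides with $I(\Gr^{\bar\la}_\mu)$. Combined with part (1), this matches $(Y_\mu/K)/\hh$ modulo torsion with $\O(\Gr^{\bar\la}_\mu)\cong Y^\la_\mu/\hh Y^\la_\mu$, so the Nakayama step closes.

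The main obstacle is the delicate $\hh$-torsion analysis required in (2): one must confirm that the $\hh$-torsion of $Y_\mu(\mathbf c)/K$ is exactly generated by the elements $\hh^{-1}[A_i^{(s)},b]$, so that killing this torsion enlarges the classical image of $K$ precisely to the Poisson closure $J_\mu^\la$, with no additional relations produced and none missing. This is where the full strength of Conjecture~\ref{co:main2} enters, since it is needed to identify the Poisson closure of $\bar K$ with the geometrically defined ideal $I(\Gr^{\bar\la}_\mu)$, thereby making the Hilbert-series comparison that underlies the graded Nakayama argument actually match.
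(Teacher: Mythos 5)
Your proof of part (1) is correct and follows essentially the same route as the paper. The paper introduces a chain of ideals $J^\la_\mu \subseteq I_3 \subseteq I_2 \subseteq I_1$, where $I_1 = I(\Grlmbar)$, $I_2$ is the kernel of your $\pi$ (equivalently $I^\la_\mu(\mathbf c)/\hh I^\la_\mu(\mathbf c)$), and $I_3$ is the image of $K/\hh K$ in $I_2$, then collapses the chain using the conjecture $I_1 = J^\la_\mu$. You give the same squeeze but streamline it by omitting $I_3$, which is only needed for part (2).

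For part (2) you take a genuinely different route, and it does not close, as you acknowledge. The paper's argument is short: $I_3$, which is exactly the classical image $\bar K$ of the two-sided ideal $K$ in $\O(\Gr_\mu)$, is a Poisson ideal containing the generators $\Delta_{\om_i,\om_i}^{(s)}$; hence $J^\la_\mu \subseteq I_3 \subseteq I_2$, and since $I_2 = J^\la_\mu$ by part (1), one gets $I_3 = I_2$. This says precisely that $K/\hh K \to I^\la_\mu(\mathbf c)/\hh I^\la_\mu(\mathbf c)$ is surjective. Setting $L = I^\la_\mu(\mathbf c)/K$ and tensoring the exact sequence $K \to I^\la_\mu(\mathbf c) \to L \to 0$ with $\C[[\hh]]/\hh$ gives $L/\hh L = 0$, and Nakayama forces $L = 0$, i.e. $K = I^\la_\mu(\mathbf c)$. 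You are aiming at the same mod-$\hh$ equality $\bar K = J^\la_\mu$ that drives the Nakayama step, but you try to reach it indirectly by characterizing the $\hh$-torsion of $Y_\mu(\mathbf c)/K$. This is circular: a precise description of that torsion is essentially equivalent to knowing $K = I^\la_\mu(\mathbf c)$, which is the assertion you want to prove. The insight you are missing is the direct step that $\bar K$ is already Poisson-closed, which supplies $J^\la_\mu \subseteq \bar K$ without any torsion bookkeeping; once granted, your Nakayama setup (which is correct) finishes immediately. If you want to make this fully rigorous you would need to justify why $\bar K$ is a Poisson ideal, a point the paper itself asserts rather tersely, but the structure of the argument should be via that assertion rather than via a torsion analysis.
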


\begin{proof}[Proof of Theorem \ref{Ylm-quant}]
Via the isomorphism $ Y_\mu(\mathbf c)/h Y_\mu(\mathbf c) \rightarrow \O(\Gr_\mu) $, we can regard $ Y_\mu^\la(\mathbf c) $ as a quotient of $ \O(\Gr_\mu) $ by an ideal, which we denote $ I_2 $.

First, note that $ \Psi_\mu^\la(A_i^{(s)}) = 0 $ for $ i \in I$, $ s >  m_i $ and thus $ \Delta_{\omega_i, \omega_i}^{(s)} \in I_2 $ for $ i \in I $ and $ s > m_i$.  Since $ I_2 $ is a Poisson ideal, we see that $ J_\mu^\la \subset I_2 $.

By Proposition \ref{pr:GenJmulam}, we see that the vanishing locus of $ J_\mu^\la $ is $ \Grlmbar $ and thus the vanishing locus of $ I_2 $ is contained in $ \Grlmbar$.  Thus it suffices to show that the vanishing locus of $ I_2 $ is not strictly contained in $ \Grlmbar $.

Since $ I_2 $ is a Poisson ideal, we see that  $ V(I_2) $ is a Poisson subvariety of $ \Grlmbar $ and thus is the union of $ \Gr^{\bar \nu}_\mu $, for $ \nu \le \lambda $.  Suppose that we have $ V(I_2) = \cup_j \Gr^{\bar \nu_j}_\mu $ for $ \nu_j < \la $.  For each $ j$, there exists $ i $ such that $ \langle \nu_j - \mu, \om_{i^*} \rangle < \langle \la - \mu, \om_{i^*} \rangle = m_i $.  Thus applying Proposition \ref{pr:SetTheoryGrlam}, $ \prod_i \Delta_{\omega_i, \omega_i}^{(m_i)} $ vanishes on $\cup_j \Gr^{\bar \nu_j}_\mu $.  Hence for some $ k $ we have $ (\prod_i \Delta_{\omega_i, \omega_i}^{(m_i)})^k \in I_2$.

On the other hand, we see that under the GKLO representation
$$\Psi_\mu^\la(A_i^{(m_i)}) = (-1)^{m_i} z_{i,1} \cdots z_{i, m_i} $$
and thus under the map
$$
\O(\Gr_\mu) \cong Y_\mu(\mathbf c)/h Y_\mu(\mathbf c) \rightarrow D^\la_\mu/hD^\la_\mu \cong \C[z_{i,k}, (z_{i,k} - z_{i,l})^{-1}, \beta_{i,k}, \beta_{i,k}^{-1}]
$$
we see that $ (\prod_i \Delta_{\omega_i, \omega_i}^{(m_i)})^k $ is mapped to a monomial in the $ z_{i,k} $.  In particular, this shows that $ (\prod_i \Delta_{\omega_i, \omega_i}^{(m_i)})^k$  does not lie in $ I_2 $, contradicting the previous paragraph.

Thus we conclude that $ V(I_2) = \Gr_\mu^\la $ as desired.
\end{proof}

\begin{proof}[Proof of Theorem \ref{Ylm-quant2}]
Let  $I_1 $ be the ideal of $ \Grlmbar $ in $ \O(\Gr_\mu) $.

Let $ K $ be the ideal in $ Y_\mu(\mathbf c) $ generated by $ A_i^{(s)} $ for $ s > m_i, i \in I $.  Then we have an inclusion $ K \subset I^\la_\mu(\mathbf c) $ and a resulting map
$$
K/hK \rightarrow I^\la_\mu(\mathbf c)/hI^\la_\mu(\mathbf c) = I_2
$$
which may not be injective.  Let $ I_3 $ denote the image of this map.  From the definitions, we see that $I_3 \subset I_2 $.  Moreover, we have that $ J^\la_\mu \subset I_3 $, since $ I_3 $ is a Poisson ideal and it contains the generators of $ I_3 $.

In the previous proof we have shown that $ I_2 \subset I_1 $. Thus we have a chain of inclusions $ J^\la_\mu \subset I_3 \subset I_2 \subset I_1 $.  On the other hand, Conjecture \ref{co:main2} shows us that $ I_1 = J^\la_\mu $.

Hence we conclude that $ I_1 = I_2 = I_3 = J^\la_\mu $.  So the first assertion holds.

For the second assertion, note that $ I_3 = I_2 $ implies that $ K/hK \rightarrow I^\la_\mu(\mathbf c)/hI^\la_\mu(\mathbf c)$ is surjective.  Let $ L = I^\la_\mu(\mathbf c)/K $.  The long exact sequence for $ \otimes_{\C[[h]]} \C $ gives
$$
K/hK \rightarrow I^\la_\mu(\mathbf c) / h I^\la_\mu(\mathbf c) \rightarrow L/hL \rightarrow 0
$$
and thus $ L/hL = 0 $.  From Nakayama's lemma, we conclude that $L = 0 $ and thus $ K = I^\la_\mu(\mathbf c) $ as desired.
\end{proof}

\subsection{Universality of the quantization} \label{se:universality}
  There is already a rich literature on the theory of deformation
  quantizations of symplectic varieties.  The most relevant work for
  us is that of Bezrukavnikov and Kaledin \cite{BK04a}, whose show the
  existence and uniqueness of deformation quantizations of symplectic resolutions.  This theory can be applied directly
  to a smooth convolution variety $ \Gr^{\overline{\vlam}}_\mu $.  Moreover, as noted by Braden, Proudfoot and the second author \cite[3.4]{BPW},  it can be extended in a very straightforward way to the
  non-smooth case $\Gr^{\overline{\vlam}}_\mu $, since we know that $ \Gr^{\overline{\vlam}}_\mu $ is a terminalization (Theorem \ref{th:terminalization}).

This shows that the variety
  $\Gr^{\bar \la}_\mu$ has a canonical family of quantizations which
  extend to a deformation
  quantization sheaf on $\Gr^{\bar \vlam}_\mu$.  The base of this
  family is the same as the base for the universal deformation of
  $\Grlmbar$ as a symplectic singularity (as constructed by
  Kaledin-Verbitsky \cite{KV02} or Namikawa \cite{NaP}).  By
  \cite[1.1]{NaWeyl}, this base $\mathbb{B}$ is an affine space modulo
  the action of a finite group.  This group can be described by
  looking at the codimension 2 strata of the product of $\Grlmbar$,
  which are $\Gr^{\la-\al_i}_\mu$, and taking the product of the Weyl
  groups attached to them by the McKay correspondence, which (using Example \ref{eg:Kleinian}) in our
  case results in the symmetric groups $S_{\la,\mu}=\prod_{i\colon
    m_i>0} S_{\la_i}$.  Here we use the fact that these strata are
  simply connected.

For the remainder of this section, let us regard the complex number $ r_i^{(s)} $ and $ c_i^{(s)} $ as variables and let $ \tilde{Y}_\mu $ be the $\C[r_i^{(s)}] $-algebra which recovers the old $ Y_\mu(\mathbf r ) $ upon specializing the variables.  Let $\tilde{Y}^\la_\mu = \tilde{Y}_\mu \otimes_{\C[r_i^{(s)}]} \C[c_i^{(s)}] / (\{A_i^{(s)} : s > m_i \}) $ (here we use a map $ \C[r_i^{(s)}] \rightarrow \C[c_i^{(s)}] $ given by (\ref{eq:rfromc})).  If Conjecture \ref{co:main2} (and hence Theorem \ref{Ylm-quant2}) holds, then $ \tilde{Y}^\la_\mu $ can be specialized (via a map $ \C[c_i^{(s)}] \rightarrow \C $) to each of the $ Y^\la_\mu(\mathbf c) $.  We conjecture that $ \tilde{Y}^\la_\mu $ is related to the above universal quantization as follows.

  First note that the BD analogue $\Gr_{\mu;\mathbb{A}^{\rho(\la)}}^{\bar\vlam}$ is a
  symplectic deformation of $\Grlmbar$ over the base
  $\mathbb{A}^{\rho(\la)}$, and thus is the pull-back of the universal
  deformation by a map $b\colon \mathbb{A}^{\rho(\la)}\to \mathbb{B}$.

\begin{conjecture}
\begin{enumerate}
\item The map $ b : \mathbb{A}^{\rho{\la}} \rightarrow \mathbb{B} $ descends to a surjective map $ \tilde b : \mathbb{A}^{\rho{\la}}/S_{\la, \mu} \rightarrow \mathbb{B} $.
\item The algebra $\tilde{Y}^\la_\mu$ is the base change along $\tilde b $ of the universal, Bez\-ru\-kav\-ni\-kov-Kaledin-type quantization.
\end{enumerate}
\end{conjecture}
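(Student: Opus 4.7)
The plan is to attack both parts of the conjecture using the universal deformation machinery of Bezrukavnikov--Kaledin \cite{BK04a}, combined with Namikawa's description of the base of universal Poisson deformations of a conical symplectic singularity.

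For part (1), I would construct the $S_{\la,\mu}$-action on $\mathbb{A}^{\rho(\la)}$ geometrically: the BD family $\Gr^{\overline{\vlam}}_{\mu; \mathbb{A}^{\rho(\la)}}$ depends only on the $X_+$-colored divisor $\sum \la_k a_k$, so it is manifestly invariant under permutations of those $a_k$ sharing a common color $\la_k$. Grouping the indices by color produces precisely an action of $\prod_i S_{\la_i} = S_{\la,\mu}$ that preserves the BD Poisson deformation, so by universality of $\mathbb{B}$ the classifying map $b$ descends to $\tilde b\colon \mathbb{A}^{\rho(\la)}/S_{\la,\mu}\to \mathbb{B}$. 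For surjectivity I would match dimensions: by \cite{NaP, NaWeyl} and the McKay correspondence applied to the codimension-2 Kleinian singularities of Example \ref{eg:Kleinian}, we have $\dim\mathbb{B}=\sum_i \la_i = \dim \mathbb{A}^{\rho(\la)}/S_{\la,\mu}$, so it remains to check that $b$ is dominant. This should reduce, after localizing near a generic point of $\Gr^{\la-\al_i}_\mu$, to the statement that moving a single BD collision point non-trivially smooths the corresponding Kleinian singularity, which is a direct local computation using the presentation of Example \ref{eg:Kleinian}.

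For part (2), assume Conjecture \ref{co:main2} so that Theorem \ref{Ylm-quant2} makes $\tilde{Y}^\la_\mu$ into a flat $\C[[\hh]]$-family of quantizations of $\Grlmbar$ parameterized by $\operatorname{Spec}\C[c_i^{(s)}] = \mathbb{A}^{\rho(\la)}/S_{\la,\mu}$. The strategy has three steps. First, identify the semiclassical limit of $\tilde Y^\la_\mu$ at each closed point $c$ with the fiber of the BD Poisson deformation over $\tilde b(c)$, using the explicit form of the GKLO representation together with the fact that the $c_i^{(s)}$ appear as elementary symmetric functions of the BD collision points grouped by color. Second, extend $\tilde Y^\la_\mu$ to a sheaf of deformation quantization algebras on the terminalization $\Gr^{\overline{\vlam}}_\mu$, using the local-to-global techniques of \cite[Section 3]{BPW}. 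Third, invoke the uniqueness of BK-type quantizations over $\mathbb{B}$ to identify the resulting sheaf with the pullback by $\tilde b$ of the universal quantization.

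The main obstacle will be the second step. There is no direct geometric construction of $\tilde Y^\la_\mu$; it is defined through the algebraic GKLO realization, so globalizing it to a sheaf on a symplectic terminalization is genuinely non-trivial. A promising route is to use the direct system of Theorem \ref{th:maptoZastava} to reduce to a Zastava-level assertion (matching the Finkelberg--Rybnikov conjectural picture), and then to build the required sheaf via the factorization structure coming from the BD Grassmannian. A secondary, but perhaps more fundamental, obstacle is that the entire description of $\tilde Y^\la_\mu$ we are trying to match depends on Conjecture \ref{co:main2}; any unconditional proof would first need to settle that conjecture, or else find an alternative presentation of this family that bypasses the reducedness issue.
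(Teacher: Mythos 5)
The statement you have been asked to prove is not proved in the paper: it is presented as an open conjecture, and the authors offer no argument for it beyond the motivational discussion preceding its statement (the identification of $\mathbb{B}$ as an affine space modulo a finite group via Namikawa, and the McKay-correspondence computation that the relevant group is $S_{\la,\mu} = \prod_{i\colon m_i>0} S_{\la_i}$). So there is no ``paper's own proof'' to compare against; you have written a research plan for an open problem.

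That said, your sketch is sensible and aligns with the picture the authors set up. For part (1), the observation that the BD family over $\mathbb{A}^{\rho(\la)}$ depends only on the colored divisor $\sum \la_k a_k$, hence is manifestly $S_{\la,\mu}$-equivariant, is exactly the mechanism implicit in the authors' discussion; passing to the quotient via universality of $\mathbb{B}$ is the natural next step. Your dimension count for surjectivity is plausible, but the reduction of dominance of $b$ to a local Kleinian computation near a generic point of $\Gr^{\la-\al_i}_\mu$ should be checked carefully; one needs to verify that the period map for the universal deformation of $\Grlmbar$ is compatible with restriction to formal neighborhoods of these codimension-2 strata, which is a point Namikawa's theory handles but which is not automatic. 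For part (2), you have correctly identified the two genuine obstructions: the need to globalize the algebraically-defined GKLO quotient $\tilde Y^\la_\mu$ to a deformation-quantization sheaf on the terminalization $\Gr^{\overline{\vlam}}_\mu$ before the Bezrukavnikov--Kaledin classification can be invoked, and the fact that the flatness and identification of the semiclassical limit currently rest on Conjecture \ref{co:main2}. These are precisely the reasons the authors leave the statement as a conjecture. One additional subtlety worth flagging: the conjecture asserts only that $\tilde b$ is \emph{surjective} and that $\tilde{Y}^\la_\mu$ is the base change along $\tilde b$, not that $\tilde b$ is an isomorphism; this allows for degenerate cases (as the authors note just after the conjecture, e.g.\ when some $c_i^{(1)}$ parameters are trivial or when $\la=\mu$), and any proof strategy must accommodate the fact that $\mathbb{A}^{\rho(\la)}/S_{\la,\mu}$ may properly dominate $\mathbb{B}$.
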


\begin{example}
We continue Example \ref{ExampleA1}, so $ G = SL_2 $ and $ \lambda = \alpha^\vee, \mu = 0 $.  Note that in $ Y^\lambda_\mu $, we have that  $E^{(s)}=(-A^{(1)})^{s-1}E^{(1)}$,
and $F^{(s)}=F^{(1)}(-A^{(1)})^{s-1},$  and so $Y^\lambda_\mu$ is generated by $ E^{(1)} $ and $ F^{(1)} $.

Let $ U_\hh \mathfrak{sl}_2 $ denote the $ \hh$-version of the universal enveloping algebra of $ \mathfrak{sl}_2 $.  Let $ C = EF + FE + \frac{1}{2}H^2 $ be its Casimir element.  For any complex number $ c $, let $ Z_c $ denote the ideal in $U_\hh \mathfrak{sl}_2$ generated by the central element $C - c $.  Standard results give  that $ U_\hh \mathfrak{sl}_2 / Z_c $ is a quantization of the nilpotent cone of $ \mathfrak{sl}_2 $, which is isomorphic as a Poisson variety to $ \Grlmbar $.

The map $$ E^{(1)} \mapsto E,  \quad H^{(1)} \mapsto H +c^{(1)} + h, \quad F^{(1)} \mapsto F$$
defines an isomorphism $ Y^\lambda_\mu \cong U_h\mathfrak{sl}_2/Z_{c} $, where $c=2c^{(2)}-\frac{1}{2}(c^{(1)})^2+\frac{1}{2}h^2$.  If we don't specialize, then the same formulas combined with the assignment
$$
c^{(2)}\mapsto -\frac{1}{2}C+\frac{1}{4}(c^{(1)})^2-\frac{1}{4}h^2
$$
give an  isomorphism
  \[\tilde{Y}^2_0 \cong U_\hh(\mathfrak{sl}_2)[c^{(1)}].\]

In this example,
$U_\hh(\mathfrak{sl}_2)$ is the universal quantization, and $c^{(1)}$ a
trivial deformation parameter. The universal family
is \[\mathfrak{sl}_2\overset{\operatorname{tr}(a^2)}\longrightarrow\C. \]
Since the fiber of the BD analogue over $(x,y)\in \mathbb{A}^2$ can be
identified with matrices with eigenvalues $x$ and $y$, the map $b$ is
just $b(x,y)=\nicefrac{1}{4}(x-y)^2$.  Thus, choosing $x+y$ and
$(x-y)^2$ as generators of symmetric functions, $\tilde b$ is just the projection map $\mathbb{A}^2\to \mathbb{A}^1$.
\end{example}

The sum of the $c_i^{(1)}$
is always a trivial deformation parameter; usually this is the only such parameter,
but there are degenerate cases where other parameters can be
trivialized as well (for example, if $\la=\mu$).

\subsection{Quantization of Zastava spaces}
In this section, we assume that Conjecture \ref{co:main2} holds and thus we will assume the conclusions of Theorem \ref{Ylm-quant2}.

Let us fix $ \nu $ in the positive coroot cone.  Choose some $ \mu_0 $ such that $ \mu_0 + \nu $ is dominant.  Let $ \mathbf c $ be a collection of complex numbers as above and consider $ Y_{\mu_0}^{\mu_0 + \nu}(\mathbf c) $.

Now for any dominant $ \mu$ with $ \mu \ge \mu_0 $, we extend $\mathbf c $ by 0 and (slightly abusing notation) consider $ Y_{\mu}^{\mu+\nu}(\mathbf c) $.  Since the generators of $ Y_{\mu}^{\mu+\nu}(\mathbf c)$ are a subset of the generators of $ Y_{\mu_0}^{\mu_0 + \nu}(\mathbf c) $ and the relations are the same, we obtain a map $ Y_{\mu}^{\mu+\nu}(\mathbf c)\rightarrow Y_{\mu_0}^{\mu_0 + \nu}(\mathbf c) $.  It is easy to see that this map is an isomorphism on the $ N$th filtered piece if $ \langle \mu, \alpha_i \rangle \ge N $ for all $i $.

Thus this system stabilizes to the algebra $ Y^{\infty + \nu}_\infty
$, which is the quotient of the Borel Yangian $ Y_\infty $ by the
2-sided ideal generated by $ A_i^{(s)} $ for $ s > \langle \nu,
\alpha_i \rangle $; perhaps surprisingly, this limit doesn't depend on $ \mathbf c $ or our starting $ \mu_0 $.

Combining together Theorem \ref{Ylm-quant2} with Theorem \ref{th:maptoZastava}, we obtain the following (dependent on Conjecture \ref{co:main2}), which was conjectured in \cite{FR} for $ G = SL_n $ (and proven for $ G =SL_2 $).
\begin{theorem}
$Y_\infty^{\infty + \nu}/ \hh Y_\infty^{\infty + \nu} $ is isomorphic
to the Poisson algebra $ \O(Z_\nu) $. \qed
\end{theorem}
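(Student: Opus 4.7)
The proof I have in mind is to combine Theorem \ref{Ylm-quant2} with Theorem \ref{th:maptoZastava} by working one graded piece at a time. The plan is as follows. Both $Y_\mu^{\mu+\nu}(\mathbf c)$ and $\mathcal O(\Gr_\mu^{\overline{\mu+\nu}})$ inherit $\mathbb{N}$-gradings (compatible with the loop rotation $\kb^\times$-action), and Theorem \ref{Ylm-quant2}(1) gives an isomorphism of graded Poisson algebras $Y_\mu^{\mu+\nu}(\mathbf c)/\hh Y_\mu^{\mu+\nu}(\mathbf c) \cong \mathcal O(\Gr_\mu^{\overline{\mu+\nu}})$ for every $\mu\ge\mu_0$. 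The key point is that all the relevant stabilization maps respect the gradings.

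The next step is to fix a degree $N$ and choose $\mu\ge\mu_0$ large enough that $N\le\langle\alpha_i,\mu\rangle$ for all $i$. By the observation preceding the theorem, the transition map $Y_\mu^{\mu+\nu}(\mathbf c)\to Y_{\mu_0}^{\mu_0+\nu}(\mathbf c)$ is an isomorphism on the $N$th filtered piece, so the same is true of the map $Y_\mu^{\mu+\nu}(\mathbf c)\to Y_\infty^{\infty+\nu}$; hence $\bigl(Y_\infty^{\infty+\nu}/\hh Y_\infty^{\infty+\nu}\bigr)_N\cong \bigl(Y_\mu^{\mu+\nu}(\mathbf c)/\hh Y_\mu^{\mu+\nu}(\mathbf c)\bigr)_N$ once $\mu$ is sufficiently large. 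On the other side, Theorem \ref{th:maptoZastava} guarantees that $\mathcal O(Z_\nu)_N \to \mathcal O(\Gr_\mu^{\overline{\mu+\nu}})_N$ is an isomorphism in this same range. Chaining these with Theorem \ref{Ylm-quant2}(1) produces a degree-$N$ isomorphism $\mathcal O(Z_\nu)_N \cong \bigl(Y_\infty^{\infty+\nu}/\hh Y_\infty^{\infty+\nu}\bigr)_N$, and summing over $N$ gives the desired isomorphism of graded vector spaces.

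Finally, I would check that this isomorphism is one of Poisson algebras. Because both the direct system on the Yangian side and the system of maps $\Gr_\mu^{\overline{\mu+\nu}}\to \Gr_{\mu'}^{\overline{\mu'+\nu}}$ are, respectively, algebra maps and Poisson algebra maps, and because Theorem \ref{th:maptoZastava} already produces the Poisson map $\mathcal O(Z_\nu)\to \mathcal O(\Gr_\mu^{\overline{\mu+\nu}})$, compatibility is automatic: the stabilization and the mod-$\hh$ reduction commute degree-by-degree, so the induced map on $Y_\infty^{\infty+\nu}/\hh Y_\infty^{\infty+\nu}$ intertwines the Poisson brackets coming from $\hh^{-1}[\,,\,]$ with the Poisson bracket on $\mathcal O(Z_\nu)$ inherited from $\mathcal O(\Gr_\mu^{\overline{\mu+\nu}})$ in each stable range.

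The only real subtlety — and the step I expect to be the main obstacle if one wanted to write this out rigorously — is ensuring that reduction mod $\hh$ commutes with the inverse limit defining $Y_\infty^{\infty+\nu}$. This is harmless here because the system stabilizes in each graded piece (so each $\bigl(Y_\infty^{\infty+\nu}\bigr)_N$ is actually a single $\bigl(Y_\mu^{\mu+\nu}(\mathbf c)\bigr)_N$ for $\mu$ large), and $\C[[\hh]]$-flatness of $Y_\mu^{\mu+\nu}(\mathbf c)$ propagates to the limit degree-by-degree; nothing beyond bookkeeping is required once the stabilization picture above is in place.
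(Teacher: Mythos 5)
Your proposal is correct and follows the same route the paper takes, which is a very compressed one-liner: combine Theorem \ref{Ylm-quant2} with Theorem \ref{th:maptoZastava} via the degree-by-degree stabilization. You have spelled out the bookkeeping that the paper leaves implicit, and the chain of isomorphisms $\bigl(Y_\infty^{\infty+\nu}/\hh Y_\infty^{\infty+\nu}\bigr)_N \cong \bigl(Y_\mu^{\mu+\nu}(\mathbf c)/\hh Y_\mu^{\mu+\nu}(\mathbf c)\bigr)_N \cong \mathcal O(\Gr_\mu^{\overline{\mu+\nu}})_N \cong \mathcal O(Z_\nu)_N$ for $\mu$ large relative to $N$ is exactly the intended argument. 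One small remark: your worry about reduction mod $\hh$ commuting with an ``inverse limit'' is not quite the issue here, since $Y_\infty^{\infty+\nu}$ is not defined as a limit --- the paper defines it directly as the quotient of the Borel Yangian $Y_\infty$ by the two-sided ideal generated by the high $A_i^{(s)}$, and the stabilization of the maps $Y_\mu^{\mu+\nu}(\mathbf c)\to Y_\infty^{\infty+\nu}$ degree by degree is a property, not part of the definition; but your resolution (each graded piece is eventually a single $\bigl(Y_\mu^{\mu+\nu}(\mathbf c)\bigr)_N$, so flatness and the Poisson-bracket identification hold piecewise) is the correct way to dispose of it.
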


\begin{remark}
As mentioned above, the GKLO representation gives rise to a map of graded Poisson algebras
$$
Y^\lambda_\mu(\mathbf c) / \hh Y^\lambda_\mu(\mathbf c) \rightarrow D^\lambda_\mu(\mathbf c) / \hh D^\lambda_\mu(\mathbf c)
$$
(which we expect is an inclusion) and thus to a $\C^\times $-equivariant map of Poisson varieties
$$
\prod_i (\C^{m_i} \smallsetminus \Delta) \times (\C^\times)^{m_i} \rightarrow \Gr^\lambda_\mu
$$
which we expect to be \'etale.

If we then compose with the map $ Gr^\lambda_\mu \rightarrow Z_{\lambda-\mu} $, we obtain $ \prod_i (\C^{m_i} \smallsetminus \Delta) \rightarrow Z_{\lambda-\mu} $, which was studied in \cite{GKLO}.
\end{remark}

\end{document}